\documentclass[11pt]{article}
\usepackage[utf8]{inputenc} % allow utf-8 input
\usepackage[T1]{fontenc}    % use 8bit T1 fonts
\usepackage{titlesec}
\usepackage{amssymb}
\usepackage{amsthm}
\usepackage{mathtools}
\usepackage{nicefrac}       % compact symbols for 1/2, etc.
\usepackage{bbm}
\usepackage[scr=boondoxo]{mathalpha}

\usepackage{graphicx}
\usepackage{tikz}
\usetikzlibrary{decorations.pathreplacing}
\usepackage{tikz-cd}
\usepackage[all, cmtip]{xy}

\usepackage{booktabs}       % professional-quality tables
\usepackage{caption}
\usepackage{microtype}      % microtypography
\usepackage{enumitem}
\usepackage{todonotes}
\usepackage{verbatim}
\usepackage{soul}

\usepackage[noadjust]{cite}
\usepackage[
    colorlinks=true,
    linkcolor=blue,
    citecolor=blue,
    urlcolor=blue,
    breaklinks=true
    ]{hyperref}
\usepackage[capitalise]{cleveref}

\usepackage[
    top=2.9cm,
    bottom=2.9cm,
    left=2.5cm,
    right=2.5cm,
    headsep=0.2in
]{geometry}

\usepackage{fancyhdr}

\theoremstyle{plain}

\newtheorem{theorem}{Theorem}[section]
\newtheorem{subsec}{theorem}

\newtheorem{corollary}[theorem]{Corollary}

\newtheorem{proposition}[theorem]{Proposition}

\theoremstyle{definition}

\newtheorem{definition}[theorem]{Definition}

\newtheorem{example}[theorem]{Example}
\newtheorem{remark}[theorem]{Remark}
\newtheorem{cons}[theorem]{Construction}
\newcommand{\RR} {\mathbb R}
\newcommand{\CC} {\mathbb C}
\newcommand{\ZZ} {\mathbb Z}

\newcommand{\TT} {\mathbb T}

\newcommand{\QQ} {\mathbb Q}

\def\C{\mathrm{C}}

\def\uV{\underline{V}}

\def\uH{\underline{\mathrm{H}}}

\def\sO{\mathscr{O}}

\def\sB{\mathscr{B}}
\def\sN{\mathscr{N}}
\def\sM{\mathscr{M}}

\def\sA{\mathscr{A}}
\def\sE{\mathscr{E}}
\def\sI{\mathscr{I}}

\def\cM{\mathcal{M}}

\def\cI{\mathcal{I}}

\def\H{\mathrm{H}}
\def\S{\mathrm{S}}

\def\DGA{\mathrm{DGA}}
\def\GA{\mathrm{GA}}
\def\hom{\mathrm{Hom}}
\def\Aut{\mathrm{Aut}}

\DeclareMathOperator{\coker}{coker}
\DeclareMathOperator*{\colim}{colim}
\DeclareMathOperator{\Ext}{Ext}
\newcommand{\vc} {\mathrm{Vec}_G}

\def\c#1{\mathscr{#1}}
\def\s#1{\mathscr{#1}}
\def\u#1{\underline{#1}}

\def\ra{\rightarrow}
\def\xra{\xrightarrow}

\def\id{\mathrm{id}}

\titleformat{\subsection}[runin]
  {\normalfont\bfseries} % Text formatting (e.g., bold, large)
  {\thesubsection}             % Section number display
  {0.25em}                        % Space between number and title text
  {}                           % Code before title body
\titlespacing{\subsection}
  {0pt}                        % Left margin adjustment
  {3.25ex plus 1ex minus .2ex} % Vertical space before heading
  {1em}                        % Horizontal space between heading and following paragraph text

\begin{document}

	\title{Equivariant formality for rational $\C_{p^k}$-spaces}

	\author{ Rekha Santhanam\footnote{Department of Mathematics, Indian Institute of Technology Bombay, India. reksan@iitb.ac.in}, Bittu Singh\footnote{Department of Mathematics, Indian Institute of Technology Bombay, India. 22d0781@iitb.ac.in}, Soumyadip Thandar\footnote{School of Mathematics, TIFR, India.
 stsoumyadip@gmail.com} }

	\maketitle
    \begin{abstract}
         In this paper, we discuss equivariant formality for spaces equipped with a primary cyclic group action. We study the behavior of equivariant formality with some topological constructions. The algebraic complexity needs to deal with injectivity. In the end, we discuss equivariant intrinsic formality.
    \end{abstract}
    \noindent {\it Keywords}: systems of $\DGA$s, minimal systems, equivariantly formal, intrinsically formal, unstable  equivariant  rational homotopy theory.

	\noindent 2020 {\it Mathematics Subject Classification:} Primary: 55P91, 55P62; Secondary: 16E45, 18G10.
	
\section{Introduction}\label{sec: intro}
In this article, we study the \emph{equivariant formality} of $ G$-rational spaces as defined by Fine and Triantafillou \cite{fine1993equivariant}. Elmendorf \cite{Elmendorf1983} showed that the  homotopy theory of $ G$-spaces can be studied in terms of presheaves in spaces on the orbit category $\sO_G$. Triantafillou \cite{TR1982} and Scull \cite{LS2002} use this perspective to show that the equivariant rational homotopy theory for finite groups and $\S^1$ action is equivalent to minimal system (injective diagrams on $\sO_G$) of differential graded algebras (DGAs). 

In general, the cohomology system of algebras of a system of $\DGA$s or a $G$-space cannot characterize its rational $G$-homotopy type. We say the diagram of DGAs or the corresponding space is equivariantly formal when the minimal model of the cohomology system of algebras of a space determines its equivariant rational homotopy type. 
In the non-equivariant setting, it is well known that several spaces, such as spheres, Eilenberg-MacLane spaces, 
and more generally, Hopf spaces, and compact K\"ahler manifolds are formal ( \cite{FHT,DGMS1975}). Formality is known to be compatible with several space-level constructions like products and wedges (\cite{FHT}). Also, the connected sum of formal simply connected compact manifolds is formal  (\cite[Theorem 3.13]{FOT}). 

In the equivariant case, Fine and Triantafillou \cite{fine1993equivariant} show that a compact K\"ahler manifold equipped with a holomorphic $G$-action is equivariantly formal. Rothenberg and Triantafillou, in  \cite{Rothenberg_Tri_BU} prove the equivariant formality of $BU(\alpha)$ for a complex representation $\alpha$, which is crucial to understand equivariant characteristic classes.  Triantafillou, in \cite{Tri_Hopf} establishes equivariant formality of  Hopf spaces with primary cyclic group actions. 
For spaces with $\S^1$ actions, Scull \cite{scull2004_T_formality} shows the equivariant formality of products and wedges of spaces with circle action induced from a circle action on the summands. Scull also proves that $\TT$-equivariant Eilenberg-MacLane spaces are equivariantly formal. 

In this article, we explore the equivariant formality of spaces for the finite group action case. 
Depending on the group action, a formal space may or may not be equivariantly formal.  Also, in the equivariant case, since the existence of a minimal model of a diagram of $\DGA$s requires an injectivity condition on the diagram, understanding equivariant formality is more challenging. It is therefore insufficient to define a $G$-space as equivariantly formal if its minimal model is simply the minimal model of its cohomology diagram. Fine and Triantafillou \cite{fine1993equivariant}  address this issue by first constructing the injective envelope of the cohomology system of algebras thought of as a diagram of DGAs with zero differential and then considering its minimal model.  

To compute or characterize minimal models in equivariant rational homotopy theory, we need to characterize the injective resolutions generated in each step of the minimal extension. To this end, we often restrict ourselves to the case when $G=\C_{p^k}$. Any injective resolution in this case has length one (\cref{cpk resolution length 1}) and hence is more amenable to computations. We use this fact in \cref{prop: explicit minimal cons cpk} to provide an explicit characterization of $\C_{p^k}$-minimal systems. This characterization makes the differences between non-equivariant minimality and equivariant minimality explicit. 

In the case of arbitrary finite groups, we utilize our knowledge of non-equivariant models to construct equivariant models under suitable conditions.  In \Cref{thm: minimal model for each level}, we show that a $\DGA$ endowed with a $G$-action admits a minimal model (in the sense of Sullivan) that naturally inherits a $G$-action. However, this may not always give rise to the correct algebraic model to capture the equivariant rational homotopy type of a $G$-space, as explained in \cref{rem: insufficiency to be a correct model}. We show that \cref{prop: lower fixed DGA minimal},  in certain special cases, this model is sufficient to determine the equivariant rational homotopy types. We also study the relation between levelwise minimality at a $G/H$ orbit and minimality of the system 
in  \cref{thm: levelwise minimal} and \cref{ex: minimal system but not levelwise minimal}.

With this setup in \cref{sec: equivariant formality} we study products, wedges, and connected sums on formal spaces which acquire a natural group action and their equivariant formality. 
Given a space $X$, there is a natural $\C _ {p^k}$- action on the $p^k$-fold product $X^{p^k}$ given by cyclic permutation of the factors. In \cref{permutation formal}, we show that if $X$ is formal, then this $\C_{p^k}$-space is equivariantly formal by describing its equivariant minimal model.
Similarly, for a space $X$ and for a finite group $G$, we can construct a $G$-space by taking the wedge of $|G|$ copies of $X$, where $G$ acts by permuting the summands. In \cref{thm: formality of G wedge}, we show that if $X$ is formal, then the $G$-space $\bigvee_{g\in G} X_g$ is equivariantly formal.

\cref{thm: retraction of eq formal} shows that an equivariant homotopy retract of an equivariantly formal space is equivariantly formal. This enables us to prove the converse direction of the results concerning the equivariant formality of various constructions.
In \cref{ex: join is not eqformal}, we consider the $\C_2$-space $X=\CC P^2 \# \CC P^2$, where the generator $\xi$ acts on the space by swapping the two copies. By computing its minimal model, we show that this space is not equivariantly formal. This is in contrast with the previous constructions and illustrates how the equivariant rational homotopy theory is fundamentally more involved.

A cohomology algebra $A^\ast$ is said to be \emph{intrinsically formal} if it is realized by a unique minimal Sullivan algebra up to isomorphism. Thus, intrinsic formality provides a stronger property than formality of a particular space: if $A^\ast=\H^\ast(X;\mathbb{Q})$ is intrinsically formal, then $X$ is formal. Intrinsic formality has been studied from several perspectives; see, for example, \cite{HalpStasheff1979,ShigaYagita,LUPT,ShigaYamguchi,SS12}.

The first and third authors introduced the notion of equivariant intrinsic formality in \cite{santhanam2025equivariant}. A cohomology system is said to be intrinsically formal if there is a unique minimal system of algebras up to isomorphism with the given cohomology system. This definition is closely related to equivariant formality. Any space whose cohomology system is intrinsically formal is formal. In \cref{sec: Intrinsic formality}, we study this concept for the group $\C_{p^k}$.  We describe a method for establishing sufficient conditions for plural homotopy types with a given cohomology system in \cref{thm: plural homotopy types in C}. We conclude this section with several examples and remarks highlighting how equivariant theory is significantly more complex than its classical counterpart.

\section{Background}\label{sec: background} 

Throughout this paper we work with $\DGA$s over $\QQ$ and assume $G$ is a finite group. We restrict our attention to finite-type $G$-simply connected CW-spaces. By a $G$-simply connected space, we mean a $G$-space $X$ such that $X^H$ are simply connected $(\pi_i(X^H)=0;$ for $ i=0,1)$ for each subgroup $H\leq G$. Recall a space is of \emph{finite type} if the cohomology of its fixed point sets over all subgroups $H$ is finite dimensional in each degree.
 
\subsection{Rational coefficient systems.}
We begin with coefficient systems and the notion of injectivity, which are essential for understanding equivariant rational homotopy theory. The following category serves as the indexing category for equivariant homotopy theory.

\begin{definition}\label{defn:orbit}
Given a group $G$, the category of canonical orbits is the category whose objects are $G$-sets $G/H$ and morphisms are $G$-maps between them. We denote this category by $\sO_G$. 
\end{definition}
 A \emph{coefficient system} for $G$ is a contravariant functor from $\sO_G$ to the category of abelian groups. If the codomain is the category of rational vector spaces, it is called a \emph{rational coefficient system}. We denote by $\vc$ the category of rational coefficient systems, whose morphisms are natural transformations.
 
A \emph{dual rational coefficient system} is a covariant functor from $\sO_G$ to the category of rational vector spaces. The corresponding category is denoted by $\vc^\ast.$

\begin{definition}\label{defn:inj object}    
An object $I$ in a category $\mathcal{C}$ is said to be \emph{injective} if, for every monomorphism $f:A\to B$ and every morphism $g:A\to I$, 
there exists a morphism $h:B\to I$ such that the following diagram commutes:
$$\xymatrix{
A \ar[d]_g\ar[r]^f& B\ar@{-->}[dl]^{h} \\
I }.$$
Equivalently, $h\circ f=g$. 
\end{definition}

The construction (see  \cite[Proposition 7.34]{LS2002}) below embeds a dual rational coefficient system into its \emph{injective envelope}
\begin{definition}\label{defn: standard inj env in vec}
Let $\underline{V}$ be a dual rational coefficient system.
For each subgroup $H\leq G$, define 
\begin{equation} 
V_H:=\bigcap_{H< K}\ker \u V(\hat{e}_{H,K}),
\end{equation}
 where $\hat{e}_{H,K}:G/H\to G/K$ is the canonical projection and $\u V(\hat{e}_{H,K})$ is the induced structure map on the functor $\u V$.  Note that  $V_G$ is defined to be $\uV(G/G)$. Let $\cI=\oplus_H \underline{V}_H$, where the direct sum is taken over all conjugacy classes of subgroups of $G$, and 
 \begin{equation}
 \underline{V}_H(G/K):=\hom_{\QQ(N(H)/H)}(\QQ(G/H)^K,V_H).
 \end{equation}
There is an monomorphism $\uV\to \cI$ extending the natural inclusions of $\bigcap_ {H< K}\ker \u V(\hat{e}_{H,K})\subseteq \cI (G/H)$.
\end{definition}

Triantafillou \cite[Section 3]{TR1982} proved that injective objects in the category of dual rational coefficient systems admit a particularly simple decomposition. In this case, the above embedding is an isomorphism.
\begin{proposition}\label{sum inj}
    A dual coefficient system $\u V$ is injective if and only if it is of the form $\u V = \bigoplus_H \underline{V}_H$ for some collection of $\QQ(N(H)/H)$-modules
$V_H$ and $$\underline{V}_H(G/K)=\hom_{\QQ(N(H)/H)}(\QQ(G/H)^K,V_H).$$
\end{proposition}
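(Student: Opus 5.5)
We prove the two implications separately: first that each $\underline{V}_H$ is injective, then that any injective $\u V$ is isomorphic to its envelope $\cI$ of \cref{defn: standard inj env in vec}.

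\emph{Sufficiency.} It suffices to show that each $\underline{V}_H$ is injective, since a finite direct sum of injectives is injective. The idea is to recognise $\underline{V}_H$ as a right adjoint applied to an injective module. Consider the restriction functor $R_H:\vc^\ast\to \QQ(N(H)/H)\text{-mod}$, $\u W\mapsto \u W(G/H)$. Here $N(H)/H\cong \Aut(G/H)$ acts because $\u W$ is covariant. We claim that $R_H$ has a right adjoint given by
\[
V\mapsto \bigl(G/K\mapsto \hom_{\QQ(N(H)/H)}(\QQ\,\sO_G(G/K,G/H),V)\bigr).
\]
Since $\sO_G(G/K,G/H)\cong (G/H)^K$, this is exactly $\underline{V}_H$. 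The adjunction is a co-Yoneda argument. A natural map $\u W\to \underline{V}_H$ amounts to compatible pairings $\u W(G/K)\otimes \QQ\,\sO_G(G/K,G/H)\to V$. Such pairings correspond to the single equivariant map $\u W(G/H)\to V$ obtained by evaluating on $\id$. Now $R_H$ is exact, because evaluation preserves monomorphisms. A right adjoint of an exact functor preserves injectives. Finally, by Maschke's theorem every $\QQ(N(H)/H)$-module is injective. Hence $\underline{V}_H$ is injective.

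\emph{Necessity.} Let $\u V$ be injective and embed it as $j:\u V\to \cI$ as in \cref{defn: standard inj env in vec}. Since $\u V$ is injective, $j$ splits, so $\cI\cong \u V\oplus \u C$. We aim to show $\u C=0$; then $\u V\cong \cI$ has the required form. To do this we use the kernel functor $\kappa_H(\u W):=\bigcap_{H<K}\ker \u W(\hat e_{H,K})$. It is additive and left exact, so $\kappa_H(\cI)=\kappa_H(\u V)\oplus\kappa_H(\u C)$.

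The main computation is $\kappa_H(\cI)=V_H$. For $H\ne L$ in $\cI$, the summand $\underline{V}_L(G/H)$ is spanned by functions on $(G/L)^H$. Evaluating on $G/K$ for $H<K$ means restricting such functions to $(G/L)^K$. We need to check that these restrictions jointly detect every element. If $\underline{V}_L(G/H)\neq 0$, then $H$ is subconjugate to $L$, and we may take $H<L$ up to conjugacy. Any equivariant function $f$ with zero restriction to every $(G/L)^K$ vanishes at each point $gL$, because $gL\in (G/L)^{gLg^{-1}}$ and $H<gLg^{-1}$. The delicate point is to choose the subgroups $K$ correctly, namely $K=H\cdot$ a conjugate of $L$; this is where care is needed. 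For $L=H$, the structure maps to $G/K$ with $K>H$ vanish, since $(G/H)^K=\emptyset$. Hence $\kappa_H(\cI)=\underline{V}_H(G/H)=V_H$.

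Next, $j$ induces an isomorphism $\kappa_H(\u V)\to\kappa_H(\cI)$ by construction, so $\kappa_H(\u C)=0$ for all $H$. Finally, a nonzero dual system $\u C$ has some nonzero $\kappa_H$. To see this, take $H$ maximal with $\u C(G/H)\ne 0$. Every $G/K$ with $K>H$ then has $\u C(G/K)=0$, so $\kappa_H(\u C)=\u C(G/H)\neq 0$. Therefore $\u C=0$, and $\u V\cong\cI=\bigoplus_H\underline{V}_H$.
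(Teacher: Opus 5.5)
Your proof is correct. The paper gives no argument for this proposition; it is quoted from Triantafillou \cite[Section 3]{TR1982}, so there is no in-paper proof to match, and what you wrote is a self-contained proof.

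\textbf{Sufficiency.} Identifying $\underline V_H$ with the right Kan extension of $V_H$ along $B(N(H)/H)\to\sO_G$ is the cleanest route. This is the right adjoint of the exact functor of evaluation at $G/H$, so it preserves injectives, and Maschke makes $V_H$ injective. The same adjunction, $\hom_{\vc^\ast}(\u W,\underline V_H)\cong\hom_{\QQ(N(H)/H)}(\u W(G/H),V_H)$, is the universal property that the envelope construction of \cref{defn: standard inj env in vec} tacitly relies on.

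\textbf{Necessity.} You split $\cI\cong\u V\oplus\u C$ and detect $\u C$ with the top-kernel functors $\kappa_H$. This is a general-$G$ version of what the paper does by hand in the proof of \cref{thm: injectivity in Z_p^n}. There the system and its envelope are compared level by level, using the linear subgroup lattice of $\C_{p^k}$ and surjectivity of the structure maps. Your argument needs neither.

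\textbf{Two small points.}

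First, the sentence about the ``delicate point'' is unnecessary. Since $H\le gLg^{-1}$, the subgroup $H\cdot gLg^{-1}$ is just $gLg^{-1}$, which you had already chosen. What is actually used is that $L$ not conjugate to $H$ forces $|H|<|L|$, so $gLg^{-1}$ is a proper overgroup of $H$.

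Second, necessity uses that $j$ restricts to an isomorphism $V_H=\kappa_H(\u V)\to\underline V_H(G/H)$. The paper only asserts this, citing Scull. You can derive it from your own adjunction:
\begin{itemize}
\item Choose $N(H)/H$-equivariant retractions $\pi_H:\u V(G/H)\to V_H$ by Maschke, and let $j$ be the sum of the adjoint maps $\u V\to\underline V_H$.
\item To see $j$ is a monomorphism, take a nonzero $x\in\u V(G/K)$. Pick a $G$-map $f:G/K\to G/L$ with $\u V(f)x\neq 0$ and $|L|$ maximal among such. By maximality, $y=\u V(f)x$ lies in $V_L$ (after transporting to the chosen conjugacy-class representative). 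Then the $L$-component of $j(x)$, evaluated at $f$, is $\pi_L(y)=y\neq 0$.
\item On $V_H$ the $H$-component of $j$ is $x\mapsto(a\mapsto a\cdot x)$, which is the identification $V_H\cong\underline V_H(G/H)$.
\end{itemize}
This is the same maximality trick as in your last step, and with it your proof no longer depends on the cited embedding.
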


\subsection{System of DGAs.}
A \emph{diagram of $\mathrm{DGAs}$} (or an $\s O_G$-DGA) is a covariant functor from $\s O_G$ to the category of commutative differential graded algebras. Morphisms between two such diagrams are natural transformations. We note that various constructions on diagrams of $\DGA$s, such as the product $\oplus$ and coproduct $\otimes$, are defined levelwise; that is, they are obtained by taking the product and coproduct, respectively, at each object of the indexing category (see \cite[Chapter 3(c)]{FHT}).

Let $\u V$ be a dual rational coefficient system with a projective resolution $\u V^\ast$ and $\s A$ be a diagram of DGAs.
The double complex $\hom(\uV^\ast, \sA)$ gives a spectral sequence $$E_2^{s,t}=\Ext^s(\uV,\uH^t(\sA)) \Rightarrow \H^{s+t}(\sA;\uV).$$ Injectivity of $\sA $ is required for the convergence of the spectral sequence.  This spectral sequence plays a key role in the construction of minimal models, which will be discussed later in \cref{thm: existence of minimal model}.
\begin{definition}
    A \emph{system of $\mathrm{DGAs}$} $ \mathscr{A}$ is a covariant functor from the orbit category of $G$ to the category of $\mathrm {DGAs}$ such that the underlying dual rational coefficient system is injective.
\end{definition}

We now want to define weak equivalences on the category of diagrams of $\DGA$s. Before that, we define the notion of homotopy between morphisms of diagrams of $\DGA$s, which is dual to the notion of homotopy between equivariant maps of spaces.
\begin{definition}
    The morphisms $f,g: \sA \rightarrow \mathscr B$ are said to be \emph{homotopic} if there is a morphism $$F: \s A \rightarrow \mathscr B \otimes \u\QQ(t,dt)$$ where $|t|=0$, such that $p_0 F=f$ and $p_1 F= g$. Here $p_i:\s B \otimes \u{\QQ}(t,dt) \rightarrow \mathscr B$ are defined by $p_i(t)=i$ and $p_i(dt)=0$ for $i=0,1.$
\end{definition}
This does not define an equivalence relation on the diagram of $\DGA$s. However, it does give an equivalence relation on {\it minimal systems of $\DGA$s} (Definition \ref{def:minimal}), which we discuss later in this section. We define a coarser relation on diagrams of $\DGA$s. Given two diagrams of $\DGA$s $\sA, \sB$,  if there is a morphism  $f$ in $\sA \to \sB$ or $\sB \to \sA$ inducing a cohomology isomorphism at each level (at $G/H$ for all $H \leq G$), then $f$ is said to be a quasi-isomorphism. The equivalence relation generated by quasi-isomorphisms is defined as a weak equivalence of systems of $\DGA$s.

\begin{definition}\label{def:weakyeqivalent}
Let $\sA$ and $\sB$ be two $\DGA$ diagrams over $\sO_G$. We say $\sA$ and $\sB$ are  \emph{weakly equivalent} if there is a zig-zag of quasi-isomorphisms connecting $\s A$ and $\s B.$
\end{definition}

\subsection{Minimal models.}
The algebraic counterparts of equivariant rational Postnikov towers are minimal systems of $\DGA$s. Consequently, minimal systems encode the rational homotopy types of $G$-spaces. In the equivariant setting, however, the additional injectivity requirement makes the notion of minimality considerably more subtle.

A \emph{(Sullivan) minimal algebra} $(\QQ (V), d)$ is a freely generated commutative differential graded algebra on a graded vector space $V$ such that the differential decomposes as $d(V)\subseteq\QQ^+(V)\cdot\QQ^+(V)$ (see \cite{Sullivan_infinitesimal}). 
 Recall that a \emph{minimal algebra} $M$ is a free graded algebra which can be written as an increasing union of $M_n$'s where $M_0=M_1=\QQ$, and $M_{n-1} \hookrightarrow M_n$ is a Hirsch extension for every $n$ (see \cite[Theorem 10.3]{PGJM}). A {\it minimal model} of a $1$-connected $\DGA$ $A$, is a pair $(M,\rho)$, where $M$ is a minimal algebra and $\rho:M\to A$ is a quasi-isomorphism. 
\subsubsection{Elementary extension of a system of DGAs}\label{construction of elementary extn}
Given a system of $\DGA$s $\sA$, a system of vector spaces $\underline{V}$ and a morphism $\alpha : \underline{V} \rightarrow \underline{Z}^{n+1}(\sA)$ in  $\vc^\ast$, we can extend the system of $\DGA$s to form a new system of  $\DGA$s $\sA^\alpha(\underline{V})$. This extension $\sA \hookrightarrow \sA^{\alpha}(\uV)$ is called an \emph{elementary extension} and the construction is discussed below.
\underline{{\textbf{Construction:}}}
\begin{enumerate}
     \item We will take the injective resolution of $\underline{V}:$ $$\begin{tikzcd}
	\underline{V} & \underline{V}_0 & \underline{V}_1 & \underline{V}_2 & \cdots
	\arrow[hook, from=1-1, to=1-2]
	\arrow["{w_0}", from=1-2, to=1-3]
	\arrow["{w_1}", from=1-3, to=1-4]
	\arrow["{w_2}", from=1-4, to=1-5]
\end{tikzcd}$$
    We embed $\underline{V}$ into its injective envelope of $\underline{V}$ (say $\underline{V_0}$), then embed $\underline{V_0}/\underline{V}$ into its injective envelope (say $\underline{V_1}$). Define $w_0:\underline{V_0} \rightarrow \underline{V_1}$ by the composition. Now we will repeat the process, i.e., take the injective envelope of $\coker  w_i$.
    $$\begin{tikzcd}
	{\underline  V} & {\underline  V_0} & {\underline  V_1} & {\underline  V_2} & {\underline V_3\cdots} \\
	& {\underline   V_0 /\underline   V} & {\coker  w_0} & {\coker  w_1}
	\arrow[hook, from=1-1, to=1-2]
	\arrow["{w_0}", dashed, from=1-2, to=1-3]
	\arrow[from=1-2, to=2-2]
	\arrow["{w_1}", dashed, from=1-3, to=1-4]
	\arrow[from=1-3, to=2-3]
	\arrow["{w_2}", dashed, from=1-4, to=1-5]
	\arrow[from=1-4, to=2-4]
	\arrow[hook, from=2-2, to=1-3]
	\arrow[hook, from=2-3, to=1-4]
	\arrow[hook, from=2-4, to=1-5]
\end{tikzcd}$$
\item We construct a commutative ladder
$$\begin{tikzcd}\label{ladder diagram}
	\underline{V} & \underline{V}_0 & \underline{V}_1 & \underline{V}_2 & \cdots \\
	{{Z}^{n+1}(\mathscr A)} & {\mathscr A^{n+1}} & {\mathscr A^{n+2}} & {\mathscr A^{n+3}} & \cdots
	\arrow[hook, from=1-1, to=1-2]
	\arrow["\alpha"', from=1-1, to=2-1]
	\arrow["{w_0}", from=1-2, to=1-3]
	\arrow["{\alpha_0}"', from=1-2, to=2-2]
	\arrow["{w_1}", from=1-3, to=1-4]
	\arrow["{\alpha_1}"', from=1-3, to=2-3]
	\arrow["{w_2}", from=1-4, to=1-5]
	\arrow["{\alpha_2}"', from=1-4, to=2-4]
	\arrow[hook, from=2-1, to=2-2]
	\arrow["d", from=2-2, to=2-3]
	\arrow["d", from=2-3, to=2-4]
	\arrow["d", from=2-4, to=2-5]
\end{tikzcd}$$
We produce the maps $\alpha_i$ inductively by observing that $d \alpha_i w_{i-1}=d d \alpha_{i-1}=0$, so $d \alpha_i|_{\text {im } w_{i-1}}=0$ and by the injectivity of $\mathscr{A}$ we can fill in
$$\begin{tikzcd}
	{\underline{V}_i/\text{im } w_{i-1}} & {\sA^{n+i+1}} \\
	{\underline{V}_{i+1}}
	\arrow[from=1-1, to=1-2]
	\arrow["{d\alpha_i}", hook', from=1-1, to=2-1]
	\arrow["{\alpha_{i+1}}"', dashed, from=2-1, to=1-2]
\end{tikzcd}$$
\item Let ${\QQ}(\uV_i)$ denotes the freely generated commutaive $\DGA$ ${\QQ}(\uV_i(G/H))$ at each orbit levels $G/H.$
Define $\mathscr A^{\alpha}(\underline{V}):=\sA \otimes \bigotimes_i (\QQ(\underline{V}_i))$ and extend the differential by defining $d|_{\underline{V}_i}=w_i+(-1)^i \alpha_i $. Here $\underline{V}_i$ are added in $n+i$ degree. The injectivity of the $\mathscr A^{\alpha}(\underline{V})$ system is clear, as it is a tensor product of injective systems (see \cite{Golasinski1997}). We write $\sA(\uV)$ instead of $\sA^{\alpha}(\uV)$ for simplicity.
\end{enumerate}

  A minimal system of $\DGA$s is defined as follows. 
\begin{definition}\label{def:minimal}
A system of $\DGA$s $\sM$ is \emph{minimal} if $\sM=\cup_n\sM_{n}$, where $\sM_{0}=\sM_{1}=\underline{\mathbb{Q}}$ and $\sM_{n}=\sM_{n-1}(\underline{V})$ is the elementary extension for some diagram of vector spaces $\underline{V}$ of degree $\geq n$. 
\end{definition}

The following lifting criterion is often useful and needed to show that homotopy between minimal systems reduces to isomorphisms. 
 \begin{proposition}\cite[Proposition 5.4]{TR1982} \label{prop: lifting property}
     If $g$ is a quasi-isomorphism, then the following diagram admits a lift that is unique up to homotopy:
     $$\begin{tikzcd}
	& {\mathscr A} \\
	{\mathscr M} & {\mathscr B}
	\arrow["g", from=1-2, to=2-2]
	\arrow["{\tilde f}", dashed, from=2-1, to=1-2]
	\arrow["f", from=2-1, to=2-2]
\end{tikzcd}$$
 \end{proposition}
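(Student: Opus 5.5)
We prove the lifting statement of \cref{prop: lifting property} by induction along the filtration $\sM=\cup_n \sM_n$ of \cref{def:minimal}, with obstruction theory at each stage. Existence and uniqueness up to homotopy are handled together: we prove the relative statement that a lift already given on $\sM_{n-1}$, with $g\tilde f\simeq f$ there, extends over $\sM_n=\sM_{n-1}(\uV)$. Since $\sM_0=\sM_1=\u\QQ$, the base case is trivial. We replace $g$ by a surjective quasi-isomorphism when needed, for instance by factoring through a path-type system $\sB\otimes\u\QQ(t,dt)$. This lets us assume $g\tilde f=f$ strictly on $\sM_{n-1}$ and work with the relative system $\ker g$ or the mapping cone of $g$. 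Because $g$ is a quasi-isomorphism at every level $G/H$, the cohomology coefficient system of that relative object vanishes identically.

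For the extension step, recall that $\sM_n$ is obtained by adjoining the injective resolution $\uV_0\to\uV_1\to\cdots$ with differential $d|_{\uV_i}=w_i+(-1)^i\alpha_i$.
\begin{enumerate}
\item Build the ladder of maps $\uV_i\to\sA^{n+i}$ inductively.
\item At each stage, the failure to choose $\beta_i:\uV_i\to\sA^{n+i}$ compatible with $d$, with $\tilde f\alpha_i$, and with $f$ after applying $g$ is a map from $\uV$, or from $\coker w_{i-1}$, into relative cocycles.
\item This failure defines a class in $\H^{*}(g;\uV)$. The double complex $\hom(\uV_\ast,-)$ gives a spectral sequence with $E_2=\Ext^s(\uV,\uH^t(\mathrm{cone}\, g))$, converging to $\H^{*}(g;\uV)$ because the systems involved are injective.
\item Since $\uH^t(\mathrm{cone}\, g)=0$, the $E_2$-term vanishes, so the obstruction vanishes.
\item Extension is then possible, using the injectivity of $\sA$ (a system of $\DGA$s) to fill in the maps exactly as in the construction of elementary extensions.
\end{enumerate}

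Uniqueness up to homotopy follows by applying the same relative extension to the system $\sM\otimes\u\QQ(t,dt)$, relative to its two ends. Two lifts $\tilde f_0,\tilde f_1$ give a map on $\sM\otimes\{0,1\}$ together with a homotopy downstairs. Extending this over the cylinder, cell by cell along the same filtration, again meets obstructions in $\H^\ast(g;\uV)=0$.

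I expect the main obstacle to be the bookkeeping in the equivariant obstruction step. One must check that the obstruction really lives in $\H^{*}(g;\uV)$ computed by the convergent spectral sequence. This requires the target systems to be injective, so the relative object (kernel or cone) of $g$ has to be arranged to be injective too, which may force replacing $\sA$ by an injective surjective resolution of $g$. I would first try to use the tensor product with $\u\QQ(t,dt)$, which preserves injectivity by \cite{Golasinski1997}, to obtain this surjective replacement.
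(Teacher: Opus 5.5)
Your existence half is the standard obstruction-theoretic argument behind the citation; the paper gives no proof of its own and simply quotes \cite[Proposition 5.4]{TR1982}. You induct over the elementary extensions, place the obstruction in $\H^{n+1}(g;\uV)$, and kill it because the cone of $g$ is acyclic.

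The step that fails as written is uniqueness. By definition, a homotopy $\tilde f_0\simeq\tilde f_1$ of maps $\s M\to\s A$ is a morphism $\s M\to\s A\otimes\u\QQ(t,dt)$, so $\u\QQ(t,dt)$ belongs on the target. The system $\s M\otimes\u\QQ(t,dt)$ is a path object for $\s M$, not a cylinder: its ``ends'' $p_0,p_1$ are maps out of it, and it contains no subsystem ``$\s M\otimes\{0,1\}$''. So there is nothing to extend over cell by cell.

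The correct move is to reuse existence for a different quasi-isomorphism. First compose $g\tilde f_0\simeq f\simeq g\tilde f_1$ into a single homotopy $K$; this is legitimate because homotopy is an equivalence relation on maps out of a minimal system. Then lift $(\tilde f_0,K,\tilde f_1)\colon\s M\to\s P:=\s A\times_{\s B}(\s B\otimes\u\QQ(t,dt))\times_{\s B}\s A$ along $\pi=(p_0,\,g\otimes\id,\,p_1)\colon\s A\otimes\u\QQ(t,dt)\to\s P$, which is a quasi-isomorphism because $g$ is. A lift $L$ is itself a homotopy $p_0L\simeq p_1L$, with $p_0L\simeq\tilde f_0$ and $p_1L\simeq\tilde f_1$, so $\tilde f_0\simeq\tilde f_1$. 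Equivalently, you can do obstruction theory for extending homotopies along the filtration of $\s M$; those obstructions sit one degree lower, in $\H^{n}(g;\uV)$.

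Your ``main obstacle'' is milder than you fear. The cone $C(g)$, with $C(g)^m=\s A^{m+1}\oplus\s B^m$, is degreewise injective automatically, so nothing needs rearranging. A bounded-below acyclic complex of injectives is contractible, so $\hom(\u W,C(g))$ is acyclic for every $\u W$, not just for $\uV$. This is what your spectral sequence computes. It kills the obstruction at every rung, whether that obstruction is defined on $\uV$, on $\uV_i$, or on a quotient of $\uV_i$.

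If you want strict commutativity via a kernel, use the factorization $\s A\to\s A\times_{\s B}(\s B\otimes\u\QQ(t,dt))\xra{q}\s B$ with $q(a,\beta)=p_1\beta$. Here $q$ is a surjective quasi-isomorphism. Moreover, $\ker q$ is an extension of $\s A$ by $\s B\otimes\ker\bigl(\QQ(t,dt)\xra{(p_0,p_1)}\QQ\times\QQ\bigr)$, hence split and injective. It is this injectivity of $\ker q$, rather than the fact that tensoring with $\u\QQ(t,dt)$ preserves injectivity, that lets you lift maps out of $\uV_i$ along $q$ and extend them from $\mathrm{im}\, w_{i-1}$. The same splitting argument shows that $\s P$ above is injective, so existence applies to $\pi$.
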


If we have two minimal systems $\sM$, $\sN$ and quasi-isomorphisms $\rho_1:\sM\to \sA$ and $\rho_2:\sN\to \sA$ by the lifting property of maps from minimal systems to systems of $\DGA$s, we get a map $f:\sM\to \sN$ which is a quasi-isomorphism. By Theorem \ref{thm: uniqueness of minimal alg}, we get $f\simeq g$ where $g$ is an isomorphism.

\begin{theorem}\label{thm: uniqueness of minimal alg}\cite[Theorem 3.8]{LS2002} 
If $f:\sM\to \sN$ is a quasi-isomorphism between minimal systems of $\DGA$s, then $f$ is homotopic to an isomorphism $g:\sM \ra \sN$.

\end{theorem}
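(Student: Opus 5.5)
The plan is to adapt the classical argument for Sullivan minimal algebras (a quasi-isomorphism of minimal algebras is homotopic to an isomorphism), working inductively along the filtrations $\sM=\cup_n\sM_n$ and $\sN=\cup_n\sN_n$ given by \cref{def:minimal}.

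\textbf{Step 1: $f$ preserves the filtrations.} Since $\sN_{n}$ is generated in degrees $\le n$ together with the injective resolution terms, which sit in degrees $>n$ and are controlled by the differential, I would first replace $f$ up to homotopy so that $f(\sM_n)\subseteq \sN_n$. Using \cref{prop: lifting property} applied to the inclusion $\sN_n\hookrightarrow \sN$, which is an $n$-equivalence rather than a quasi-isomorphism, a truncated version of the lifting argument should apply. Concretely, I would induct on $n$: given $f_{n-1}:\sM_{n-1}\to\sN_{n-1}$, extend it over the elementary extension $\sM_{n}=\sM_{n-1}(\uV)$. By the universal property of the elementary extension, this amounts to choosing a map $\uV_0\to\sN^n$ compatible with $\alpha$, and then using injectivity of the resolution terms to fill in the ladder.

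\textbf{Step 2: $f_n$ is an isomorphism on indecomposables.} Let $Q(\sM)$ denote the coefficient system of indecomposables $\sM^+/\sM^+\cdot\sM^+$, and similarly for $\sN$. Minimality says the differential on $\sM$ decomposes as a decomposable part plus the linear part $w_i$ coming from the injective resolution. So $Q(\sM)$ with its linear differential is an injective resolution complex whose cohomology is the system $\uV$ of new generators in each degree. Because $f$ is a quasi-isomorphism at every orbit $G/H$, a comparison of the spectral sequence $E_2^{s,t}=\Ext^s(\uV,\uH^t(\sA))$ for $\sM$ and $\sN$ (or simply the long exact sequences of the Hirsch-type extensions at each level) shows that $f$ induces an isomorphism $\uV^{\sM}\cong\uV^{\sN}$ on the generating systems in each degree. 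This is the equivariant analogue of "a quasi-isomorphism induces an isomorphism on $\pi_*$". Then $Q(f)$ is a map of injective resolutions inducing an isomorphism on cohomology, and its restriction to the socle $\uV\hookrightarrow \uV_0$ is an isomorphism.

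\textbf{Step 3: upgrade to an isomorphism.} A map between minimal injective resolutions that is an isomorphism on $\uV$ need not itself be an isomorphism, but it is homotopic (as a chain map) to one. Equivalently, using essentiality of the injective envelopes $\uV\subset\uV_0$ and $\coker w_{i-1}\subset \uV_{i}$, any map restricting to a monomorphism on the essential subobject is a monomorphism, and by equality of (finite) dimensions levelwise it is an isomorphism. I would modify $f$ on each $\uV_i$, $i\ge1$, by a homotopy built from $\QQ(t,dt)$ so that it becomes the isomorphism dictated by essentiality. By the injectivity of $\sN$, these chain-homotopy corrections extend to a DGA homotopy $F:\sM\to\sN\otimes\u\QQ(t,dt)$. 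Then $g$ induces an isomorphism on indecomposables, and a free algebra map with that property is an isomorphism, by induction on degree within each level.

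The main obstacle is Step 3. Injective envelopes are unique only up to non-unique isomorphism, and the resolution terms $\uV_i$ lie above the degree of their generators. As a result, the induced map on the higher terms $\uV_i$ is determined only up to chain homotopy, and it must be shown that these homotopies assemble compatibly into a DGA homotopy across the whole ladder. I would first try the essentiality argument, since it avoids constructing homotopies: if $\ker f$ meets $\uV_0$ trivially it meets $\uV$ trivially, and essentiality then makes $f$ injective. Homotopy is needed only where $f$ is not already an isomorphism.
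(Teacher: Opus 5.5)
The paper gives no proof of this statement; it quotes Scull [LS2002, Theorem 3.8], so your sketch has to stand on its own. Its overall shape is reasonable: a stagewise induction, an isomorphism on generating systems, essentiality, and the free-algebra argument. However, the two load-bearing steps are asserted rather than proved.

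\textbf{Step 2 is the central gap.} The spectral sequence $\Ext^s(\uV,\uH^t)$ computes cohomology with coefficients and does not see the generating systems. The long-exact-sequence comparison you allude to only works under an inductive hypothesis that your ordering of steps does not supply. Write $\sM_n=\sM_{n-1}\otimes\QQ(\uV_{n,\bullet})$ and $\sN_n=\sN_{n-1}\otimes\QQ(\u W_{n,\bullet})$, and assume inductively that $f$ maps $\sM_{n-1}$ isomorphically onto $\sN_{n-1}$. Three facts then hold:
\begin{itemize}
\item $\sM^{n-1}\subseteq\sM_{n-1}$;
\item $\sM^n=\sM^n_{n-1}\oplus\uV_{n,0}$;
\item $d|_{\uV_{n,0}}=w_0+\alpha_0$ with $\alpha_0$ landing in $\sM_{n-1}$.
\end{itemize}
So the quotient complex gives $\uH^n(\sM/\sM_{n-1})=\ker w_0=\uV_n$. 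The five lemma on the long exact sequences of the pairs $(\sM,\sM_{n-1})$ and $(\sN,\sN_{n-1})$ then yields $\uV_n\cong\u W_n$, induced by the $\u W_{n,0}$-component of $f$. Your three steps therefore cannot be run one after another over all $n$; they must be interleaved in one induction. Relatedly, for general $G$ the full indecomposables $Q(\sM)$ with linear differential are not a sum of injective resolutions. $\sM_{n-1}$ contains resolution generators of degree $>n$, so the $\alpha_i$ may have linear parts. The right object is the stage-$n$ quotient $(\uV_{n,\bullet},w)$.

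\textbf{Step 3 misplaces the difficulty, and its first sentence is false.} A chain map of minimal injective resolutions that restricts to an isomorphism $\uV_n\to\u W_n$ is itself an isomorphism. Its kernel misses the essential subobject $\uV_n$, and its image is an injective summand containing the essential subobject $\u W_n$; then iterate along the resolution. Your own essentiality remark already says this. Hence, once $f(\sM_n)\subseteq\sN_n$, the $\u W_{n,\bullet}$-component of $f$ is a chain isomorphism, and $f|_{\sM_n}\colon\sM_n\to\sN_n$ is an isomorphism with no homotopy correction.

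\textbf{Step 1 is where homotopy is actually needed, and you leave it at ``should apply''.} You build an extension $f_n$ but never show $f_n\simeq f|_{\sM_n}$. Nor do you explain how the stagewise homotopies assemble into a single $F\colon\sM\to\sN\otimes\u\QQ(t,dt)$. This needs a homotopy-extension argument over each elementary extension, where the ladder is filled using injectivity of the target, not of the $\uV_i$.

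For $G=\C_{p^k}$ this step is vacuous. There $\uV_{n,1}=w_0(\uV_{n,0})$, and $f(w_0u)=df(u)-f(\alpha_0u)\in\sN_n$ by $\sN^n\subseteq\sN_n$ and induction. So $f$ preserves the filtration on the nose, and the argument above shows $f$ is actually an isomorphism. For general $G$, $\uV_{n,1}$ is only the injective envelope of $\mathrm{im}\,w_0$. That identity says nothing about the rest of $\uV_{n,1}$, or about $\uV_{n,i}$ for $i\ge 2$, and this is exactly the part of the proof still missing.
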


This leads to the definition of the minimal model unique up to isomorphism.

\begin{definition}\label{def:minimalmdel}
If $\sM$ is a minimal system and $\rho: \sM\to \sA$ is a quasi-isomorphism, we say that $\sM$ is a \emph{minimal model} of $\sA$. 
\end{definition}
Observe that any minimal system is cohomologically $1$-connected, that is, it satisfies $\uH^0(\sM)=\mathbb{Q}$ and $\uH^1(\sM)=0$. Triantafillou \cite[Theorem 1.4]{TR1982} and Scull \cite[Theorem 3.11]{LS2002} establish the existence of minimal model.
\begin{theorem}\label{thm: existence of minimal model}
    A system of $\DGA s$, $\mathscr A$, that is cohomologically $1$-connected has a minimal model, that is, there exists $(\mathscr M,\rho)$, where $\mathscr M$ is a minimal system and $\rho:\mathscr M\to \sA$ is a quasi-isomorphism.
\end{theorem}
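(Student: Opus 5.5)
We construct the minimal model by induction on degree, in the manner of the classical Sullivan construction but with every vector space replaced by an injective dual coefficient system and every adjunction of generators performed through the elementary extension of \cref{construction of elementary extn}. Concretely, we build minimal systems $\sM_n$ with maps $\rho_n:\sM_n\to\sA$ satisfying the inductive hypothesis $(\ast)_n$: the induced map $\uH^i(\sM_n)\to\uH^i(\sA)$ is an isomorphism for $i\le n$ and a monomorphism for $i=n+1$. We start with $\sM_1=\underline{\QQ}$ and $\rho_1$ the unit. Cohomological $1$-connectivity of $\sA$ gives $\uH^0(\sA)=\underline{\QQ}$ and $\uH^1(\sA)=0$, so $(\ast)_1$ holds. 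The model is then $\sM=\cup_n\sM_n$ with $\rho=\colim\rho_n$. Since $\sM_{n}\hookrightarrow\sM$ is an isomorphism through degree $n$ and injective in degree $n+1$ on cohomology (the later extensions add generators only in degree $\ge n+1$), $\rho$ is a quasi-isomorphism.

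The inductive step passes from $\sM_{n-1}$ to $\sM_n$ in two moves, both using elementary extensions by systems concentrated in degree $n$ or higher.

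\textbf{First move: surjectivity in degree $n$.} Let $\u C=\coker(\uH^n(\sM_{n-1})\to\uH^n(\sA))$, a dual coefficient system. We need a lift $\alpha:\u C\to \u Z^n(\sA)$ of the projection, in $\vc^\ast$. Levelwise we can choose cocycle representatives, but naturality over $\sO_G$ is the issue. This is exactly where injectivity of $\sA$ is used: the obstruction to lifting lies in $\Ext^1(\u C,-)$, and the hypercohomology spectral sequence $E_2^{s,t}=\Ext^s(\uV,\uH^t(\sA))\Rightarrow\H^{s+t}(\sA;\uV)$ identifies maps $\u C\to\uH^n$ with classes in $\H^n(\sA;\u C)$, which are realized by chain maps from the injective resolution $\u C\to\u C_0\to\u C_1\to\cdots$ into $\sA$ shifted. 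Such a chain map is precisely the ladder data $(\alpha_0,\alpha_1,\dots)$ needed in the elementary extension. We form $\sM_{n-1}(\u C)$ with differential $w_i\pm\alpha_i$ and extend $\rho$ by the $\alpha_i$. The new generators in degrees $n+i$, $i\ge1$, kill the extra cohomology created by $\u C_0/\u C$. Hence the net effect on $\uH^n$ is to add exactly $\u C$, which gives surjectivity in degree $n$ and preserves isomorphism below $n$.

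\textbf{Second move: injectivity in degree $n+1$.} Let $\u K^{n+1}=\ker(\uH^{n+1}(\sM')\to\uH^{n+1}(\sA))$, where $\sM'$ is the result of the first move. For a class in $\u K$ we have a cocycle $z$ in $\sM'$ with $\rho(z)=da$. We adjoin an elementary extension by $\u K$ in degree $n$ with $d v=z$ and $\rho(v)=a$, again choosing the representatives and the $a$'s naturally, via the injective resolution and the injectivity of $\sA$ as above. This extension may create new kernel in degree $n+1$, so we iterate countably many times and take the union. Each iteration adds generators in degree $n$ whose differentials lie in decomposables, because $\sM_{n-1}$ has no generators in degree $1$ and $n\ge 2$, so the result is still minimal. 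In the union, $(\ast)_n$ holds: any kernel element is killed at a finite stage, and degree-$n$ cohomology is unaffected since the new generators map onto cocycles that are already exact.

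The main obstacle I expect is the bookkeeping in the elementary extensions. The higher generators $\u V_i$ sitting in degrees $n+i$ must not disturb cohomology in degrees at most $n+1$ in an uncontrolled way. That is, we need $\uH^\ast(\sM_{n-1}(\uV))$ to differ from $\uH^\ast(\sM_{n-1})$ only by $\uV$ in degree $n$ (or, in the second move, by the killed classes) through degree $n+1$. I would prove this by filtering by the number of generators from $\uV_\bullet$ and observing that the linear part $\u V_0\to\u V_1\to\cdots$ is exact except at $\uV$. The spectral sequence of this filtration then reduces, in the relevant range, to the exactness of the injective resolution.
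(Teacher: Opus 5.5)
\textbf{The gap is in your first move.} You need a morphism $\alpha:\u C\to\u Z^n(\sA)$ in $\vc^\ast$ such that the composite $\u C\to\u Z^n(\sA)\to\uH^n(\sA)\to\u C$ is the identity. This asks for a section of an epimorphism onto $\u C$, so it depends on projectivity of $\u C$, not on injectivity of $\sA$. The spectral sequence does not help: it only supplies an edge map $\H^n(\sA;\u C)\to\hom(\u C,\uH^n(\sA))$, and the section of $\uH^n(\sA)\to\u C$ that you need may not exist at all.

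Here is an instance. Let $G=\C_2$ act trivially. Take $\sA(\C_2/e)=\QQ(x,z,z',a)$ with $|x|=2$, $|z|=3$, $|z'|=4$, $|a|=5$, $dz=z'$ and the other generators closed. Take $\sA(\C_2/\C_2)=\QQ(x,z)$ with $d=0$, and structure map $x\mapsto x$, $z\mapsto z$, $z'\mapsto 0$, $a\mapsto xz$. This map is surjective, so $\sA$ is a cohomologically $1$-connected system of $\DGA$s, with cohomology $\QQ[x]\otimes\Lambda(a)$ at $\C_2/e$ and $\QQ[x]\otimes\Lambda(z)$ at $\C_2/\C_2$.

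Your procedure then runs as follows:
\begin{itemize}
\item at $n=2$ it adds $x$;
\item at $n=3$ it adds the injective envelope $\u\QQ z$ of $\uH^3(\sA)$ (which is $\QQ z$ at $\C_2/\C_2$ and $0$ at $\C_2/e$), together with $z'$ at $\C_2/e$ and $dz=z'$;
\item at $n=4$ it adds nothing.
\end{itemize}
Now $\uH^5(\sM_4)$ is $0$ at $\C_2/e$ and $\QQ xz$ at $\C_2/\C_2$, while $\uH^5(\sA)$ is $\QQ a\xrightarrow{\cong}\QQ xz$. Hence $\u C$ is $\QQ a$ at $\C_2/e$ and $0$ at $\C_2/\C_2$. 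Naturality forces $\alpha(a)$ into the kernel of the structure map. Any cocycle there is exact, because $\H^5$ of the structure map is injective. So no lift exists (not even a section of $\uH^5(\sA)\to\u C$), and your construction stalls at $n=5$.

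\textbf{What is missing.} The construction the paper cites (Triantafillou, Scull) does not separate the two moves. At stage $n$ it adjoins, in one elementary extension, $\uV=\uH^n(\sM_{n-1},\sA)$: the cohomology of the mapping cone of $\rho_{n-1}$, which contains the degree-$n$ cokernel and the degree-$(n+1)$ kernel together. The "suitable morphism" exists because the cone is a complex of injectives that is acyclic below degree $n$. Inductively each system of boundaries is then a direct summand of an injective, hence injective, so in degree $n$ the cocycles split as boundaries $\oplus$ cohomology. A cone cocycle is a pair (a value of $d$ in $\u Z^{n+1}(\sM_{n-1})$, a value of $\rho$ in $\sA^n$), so a cokernel class may require a generator with nonzero differential into $\sM_{n-1}$.

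In the example, the correct generator is $v$ at $\C_2/e$ with $dv=-xz'$ and $\rho(v)=a-xz$, so that $v+xz$ represents $a$. Your first move cannot produce this, since it gives new generators only the linear differential $w_i$.

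Three smaller points:
\begin{itemize}
\item Your second move is sound only via this same splitting lemma (after a successful first move the cone is acyclic below $n$); it is not justified by the argument you give.
\item The iteration in the second move is unnecessary, since the resolution generators create no new cohomology in degree $n+1$, by exactness of the resolution.
\item The ladder maps $\alpha_i$ of an elementary extension land in $\sM_{n-1}$, so they cannot serve both as the differential and as the extension of $\rho$.
\end{itemize}
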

Scull proved this theorem by inductively constructing the minimal model. Starting with the constant coeffient system $\u \QQ$, the $n$-th stage $\mathscr M_n$ is obtained from $\mathscr M_{n-1}$ by an elementary extension determined by a suitable morphism
$\underline{V}\rightarrow Z^{n+1}(\mathscr M_{n-1}),$
where
$\underline{V}=\underline{\H}^{n}(\mathscr M_{n-1},\mathscr A).$
The minimal model is then obtained as the colimit of this inductive construction. Since minimal models are unique up to isomorphism, we shall henceforth refer to \emph{the minimal model} of a system.

Given a $G$-space $X$, the \emph{PL de Rham system} $\u {\s E}(X)$ is defined by assigning to each orbit $G/H$ the algebra of piecewise linear polynomial differential forms on the corresponding fixed point spaces: $\u {\s E}(X)(G/H)=\s E_{X^H}$. Triantafillou \cite[Proposition 4.3]{TR1982} has shown that $\u {\s E}(X)$ is injective in $\vc^\ast.$
Consequently, the system $\u {\s E}(X)$ admits a minimal model $\s M_X$. 
\begin{definition}
Let $X$ be a $G$-space, then the minimal model of $X$ is the minimal model of the system $\u {\s E}(X)$
\end{definition}

\subsection{Injective envelopes.}
A diagram of $\DGA$s may not admit a minimal model unless an additional algebraic condition, namely injectivity in the category of dual rational coefficient systems, is imposed. To overcome this obstruction, Fine and Triantafillou \cite{fine1993equivariant} introduced the notion of an \emph{injective envelope} of a diagram of $\DGA$s. This construction replaces a given diagram by a quasi-isomorphic one that satisfies the required injectivity condition and therefore admits a minimal model.
We now describe their construction.
  \begin{definition}\label{def:associ}
Let $G$ be a group and $H\leq G$. Let $A_H$ be a $\DGA$ over $\QQ$ such that $N(H)/H$ acts on it by $\DGA$ automorphisms. The \emph{associated system of $\DGA$s} $\sA_H$, of the $\DGA$ $A_H$, is a system of $\DGA$s defined as follows:
Let $V_H$ be a copy of $A_H$ considered as a graded $\QQ(N(H)/H)$-module by forgetting the differential, and let $\underbar{V}_H$ be the induced injective diagram of vector spaces \Cref{defn: standard inj env in vec}. Let $s\underbar{V}_H$ be copy of $\underbar{V}_H$ with a shift of degree by $+1$. We denote by $\bigwedge_H$ the system of acyclic $\DGA$s generated by $\underbar{V}_H\oplus s\underbar{V}_H$, where $d(\underbar{V}_H)=s\underbar{V}_H$. Now we define the {associated system} $\sA_H$ by
$$\sA_H(G/K)=
\begin{cases}
     \bigwedge_H(G/K), & \text{for } (K)=(H)\\
    \hom_{\QQ(N(H)/H)}(\QQ(G/H)^K, A_H),  & \text{ for } (K) \neq (H)
\end{cases}$$
where $(H)$ is the conjugacy class of $H$ in $G$. The value of this functor on a morphism is obvious. 
\end{definition}

\begin{definition}\label{def: inj enlargement}
Let $\sA$ be a system of $\DGA$s and let $A_H$ be the subalgebra of $\sA(G/H)$ which is equal to $\bigcap _{H< H'}\ker \sA(\hat e_{H,H'})$, where $\sA(\hat e_{H,H'})$ is the morphism induced by the projection $G/H\to G/H'$. Let $\sA_H$ be the associated system to $A_H$. The enlargement of $\sA$ at $H$ is the system of $\DGA$s $\sI_H(\sA)$ defined by 
$$\sI_H(\sA)(G/K)=
\begin{cases}
     \sA(G/K)\otimes \sA_H(G/K), & \text{for } K<H\\
    \sA(G/K),  & \text{otherwise } 
\end{cases}$$
where $K<H$ means that $K$ is a proper subgroup of a conjugate of $H$. The value of the functor $\sI_H(\sA)$ on morphisms is the obvious one, namely, they are equal to the old morphisms when restricted to the subsystems $\sA$ and $\sA_H$ respectively.

The injective envelope of a system of $\DGA$s $\sA$ is constructed by successively taking injective enlargement at each subgroup up to conjugacy (one subgroup for each conjugacy class).

\end{definition}

\begin{theorem}\cite[Theorem 1]{fine1993equivariant}\label{thm:fine inj extension}
For a diagram of $\DGA$s $\sA$ over $\sO_G$, where $G$ is finite group, there is an injective system  of $\DGA$s  $\sI(\sA)$, called the injective envelope of $\sA$ along with an inclusion $i:\sA\to \sI(\sA)$ which is a quasi-isomorphism. 
\end{theorem}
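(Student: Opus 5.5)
We argue by induction on the conjugacy classes of subgroups of $G$, enlarging $\sA$ one class at a time as in \cref{def: inj enlargement}, and checking at each step that the inclusion is a quasi-isomorphism. For the induction order we list representatives $H_1,\dots,H_m$ so that larger subgroups come first, i.e.\ if $H_i$ is subconjugate to $H_j$ then $j\le i$. We set $\sA^{(0)}=\sA$ and $\sA^{(i)}=\sI_{H_i}(\sA^{(i-1)})$, and define $\sI(\sA)=\sA^{(m)}$. There are then three things to prove:
\begin{enumerate}
\item each $\sA^{(i)}$ is a well-defined diagram of $\DGA$s;
\item each inclusion $\sA^{(i-1)}\hookrightarrow\sA^{(i)}$ is a quasi-isomorphism;
\item the final $\sA^{(m)}$ is injective as a dual coefficient system.
\end{enumerate}

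\textbf{Quasi-isomorphism.} At levels $G/K$ with $K\not<H$ nothing changes. At $K<H$ the new algebra is $\sA(G/K)\otimes\sA_H(G/K)$. By \cref{def:associ} the $\bigwedge_H$ term is acyclic, being free on $\uV_H\oplus s\uV_H$ with $d$ an isomorphism $\uV_H\to s\uV_H$. I would check that for every $K<H$ the factor $\sA_H(G/K)=\hom_{\QQ(N(H)/H)}(\QQ(G/H)^K,A_H)$ is also acyclic, or at least has cohomology $\QQ$ in degree $0$. I expect this to be the main obstacle, since $\hom(\,\cdot\,,A_H)$ with $A_H$ a $\DGA$ (with $\QQ(N(H)/H)$ semisimple, so $\hom$ is exact) just computes copies of $\H(A_H)$. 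I would therefore have to reinterpret the construction so that at levels below $H$ one uses the acyclic envelope $\bigwedge_H$ induced up, i.e.\ $\hom(\QQ(G/H)^K,\bigwedge_H(G/H))$. Then Künneth over $\QQ$ gives that $\sA(G/K)\to\sA(G/K)\otimes\sA_H(G/K)$ is a cohomology isomorphism.

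\textbf{Functoriality and naturality.} The structure maps $\sA^{(i)}(\hat e)$ are tensor products of the old ones with those of $\sA_H$, which are $\DGA$ maps. At the boundary, where $K<H$ maps to $K'\not< H$, we use the map $\sA_H\to\QQ$ on the second factor. Here $A_H$ lies in the kernel of all maps to larger subgroups, which is exactly what makes the inclusion $A_H\to\sA(G/H)$ compatible with the maps to $G/H'$.

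\textbf{Injectivity.} By \cref{sum inj} it suffices to show that the underlying $\vc^\ast$-object splits as $\bigoplus_H\uV_H$. The monomorphism of \cref{defn: standard inj env in vec} sends $A_H$ into the $H$-summand through the adjoined copy $\uV_H\subset\sA_H$. After all enlargements, each restricted kernel $\bigcap\ker$ at $G/H$ has been embedded, compatibly, in an injective summand, so $\sA^{(m)}$ contains the injective envelope of its own underlying system. Since injective objects are direct summands, I would finally verify that the complement is again of the form in \cref{sum inj}, being a tensor product of systems of that form as in \cite{Golasinski1997}. Because the order of enlargement puts larger subgroups first, later enlargements do not destroy the injectivity already achieved at larger subgroups.
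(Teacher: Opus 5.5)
The paper gives no proof of this statement. It cites Fine--Triantafillou and only records the construction in \cref{def:associ} and \cref{def: inj enlargement}. Your plan is the intended route: enlarge one conjugacy class at a time, larger subgroups first, and use an acyclic free factor plus K\"unneth. You are also right that \cref{def:associ} as printed must be read with its two cases exchanged. However, your gloss of the acyclic factor at $K<H$ as $\hom(\QQ(G/H)^K,\bigwedge_H(G/H))$ is wrong in general. Since $N(H)/H$ acts freely on $(G/H)^K$, this algebra (with pointwise product) is a \emph{product} of copies of $\bigwedge_H(G/H)$, one per orbit. Its $\H^0$ therefore has dimension equal to the number of orbits, and tensoring with it is not a quasi-isomorphism; for $G=S_3$, $|H|=2$, $K=e$ there are three orbits. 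You need the free algebra $\bigwedge(\uV_H(G/K)\oplus s\uV_H(G/K))$, which is the \emph{tensor} product of those copies and is acyclic. The two agree only when $(G/H)^K$ is a single orbit, e.g.\ for abelian $G$.

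The genuine gap is at the end of the injectivity step. You argue that the complement of $\bigoplus_H\uV_H$ is injective because it is a tensor product of systems as in \cref{sum inj} (via Golasi\'nski). That argument has nothing to apply to. $\sA^{(m)}$ is not a tensor product of injective coefficient systems: the factor $\sA$ is arbitrary, and the boundary maps $\sA(G/K)\otimes\bigwedge_H(G/K)\to\sA(G/H)$ multiply the two factors together. A direct-sum complement would not inherit a tensor structure in any case. What you need instead is the following.
\begin{enumerate}
\item[(a)] With your ordering, step $j>i$ only changes levels $K<H_j$, and none of these is $G/H_i$ or above it. So those levels are frozen from step $i$ on, and $A_{H_i}$ equals the kernel part $V_{H_i}=\bigcap_{L>H_i}\ker\sA^{(m)}(\hat e_{H_i,L})$ of the \emph{final} system.
\item[(b)] The generator spaces $\uV_H(G/K)\subset\bigwedge_H(G/K)$ for $K<H$, together with $A_H\subset\sA^{(m)}(G/H)$, form a sub-system. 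The map $\u I=\bigoplus_H\uV_H\to\sA^{(m)}$ is a monomorphism. Suppose $u+\sum_H w_H=0$ at $G/K$ with $u\in V_K$, and push to $G/H_0$ with $H_0$ maximal among those with $w_{H_0}\neq 0$. Then $u$ dies, and every other $w_H$ dies under the augmentation because $H_0\not\le H$ up to conjugacy. Evaluation at all points of $(G/H_0)^K$ then forces $w_{H_0}=0$.
\item[(c)] By injectivity of $\u I$, $\sA^{(m)}=\u I\oplus\u C$ as diagrams. The kernel part of $\u C$ at $G/H$ is $\u C(G/H)\cap V_H\subseteq\u C(G/H)\cap\u I(G/H)=0$. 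A dual coefficient system all of whose kernel parts vanish is zero, by descending induction from $G/G$.
\end{enumerate}
Hence $\sA^{(m)}\cong\bigoplus_H\uV_H$, which is injective by \cref{sum inj}.
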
 

\begin{proposition}\cite[Proposition 8]{fine1993equivariant}\label{prop: fine factor through inj}
    Let $f:\s A\ra \s B$ be a map of $\s O_G$-$\DGA$s and $\s B$ is injective in $\vc^\ast$. Then the map factors through the injective envelope $i_H:\s A\ra \s I_H(\s A)$ at $H$, that is, there exists $\tilde f:\s I_H(\s A)\ra \s B$ such that $\tilde f i_H=f,$ that is we have the following commutative diagram:
    $$\begin{tikzcd}
	\sA & \sB \\
	{\sI_H(\sA)}
	\arrow["f", from=1-1, to=1-2]
	\arrow["{i_H}"', from=1-1, to=2-1]
	\arrow["{\tilde f}"', dashed, from=2-1, to=1-2]
\end{tikzcd}$$
\end{proposition}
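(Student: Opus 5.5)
The plan is to build $\tilde f$ one level at a time. The idea is to keep $f$ on the old part $\sA\subseteq \sI_H(\sA)$ and to define $\tilde f$ on the new tensor factor $\sA_H$ using the injectivity of $\sB$ in $\vc^\ast$. At a level $G/K$ with $K\not< H$ we have $\sI_H(\sA)(G/K)=\sA(G/K)$, so we simply set $\tilde f=f$ there. At a level $K<H$ we have $\sI_H(\sA)(G/K)=\sA(G/K)\otimes\sA_H(G/K)$. Since everything is graded commutative, a $\DGA$ map out of a tensor product is the same as a pair of $\DGA$ maps out of the factors. So it suffices to produce a morphism of diagrams $g:\sA_H\to\sB$, restricted to the levels $K<H$, subject to one compatibility condition: along every structure map $G/K\to G/H$, $g$ must agree with $f$ restricted to $A_H=\bigcap_{H<H'}\ker\sA(\hat e_{H,H'})\subseteq \sA(G/H)$. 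Then $\tilde f:=f\otimes g$ levelwise, and $\tilde f\, i_H=f$ holds by construction.

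To build $g$, I first work with the underlying dual coefficient systems, forgetting multiplication and differential. We have $V_H=A_H$ as a $\QQ(N(H)/H)$-module, and $f$ restricts to an $N(H)/H$-equivariant linear map $A_H\to\sB(G/H)$. The graded pieces $\underline V_H$ and $s\underline V_H$ are the injective systems of \cref{defn: standard inj env in vec} induced from $V_H$. The generating copy of $V_H$ at the $(H)$-level embeds into them, and this inclusion is a monomorphism in $\vc^\ast$. By \cref{defn:inj object}, the injectivity of $\sB$ lets us extend $f|_{A_H}$ to a morphism $\underline V_H\to\sB$ in $\vc^\ast$, degreewise. This is the same filling argument used to produce the maps $\alpha_i$ in the ladder of \cref{construction of elementary extn}.

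Next I make this linear map multiplicative and compatible with the differentials. At the levels $K<H$ the factor $\sA_H$ is built from the acyclic system $\bigwedge_H$ generated freely by $\underline V_H\oplus s\underline V_H$ with $d(\underline V_H)=s\underline V_H$. I would define $g$ on the generators $\underline V_H$ by the linear extension above, and on $s\underline V_H$ by $g(dv):=d\,g(v)$. Freeness then extends $g$ uniquely to a map of graded commutative algebras, and it commutes with $d$ because it does so on generators. This is the same mechanism as for elementary extensions: the acyclic pair $V\oplus sV$ imposes no relation on where the generators $V$ go.

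I expect the main obstacle to be the compatibility with the structure maps between the levels $K<H$ and the level $G/H$ (and conjugates), where the old $\sA(G/H)$ sits. There $g$ must really agree with $f$ and not only up to the choice made in the injective extension. I would handle this by choosing the extension in the second step relative to the subsystem $A_H\hookrightarrow\underline V_H$. The key point is that $A_H$ is exactly the part of $\sA(G/H)$ killed by all maps to larger orbits, so the projection formulas $\hom_{\QQ(N(H)/H)}(\QQ(G/H)^K,-)$ send the generators to elements whose images are determined by $f|_{A_H}$. I would then verify naturality case by case over $K<H$, $K\sim H$, and $K\not\le H$. If a discrepancy appears, I would try to correct it by adjusting $g$ on $s\underline V_H$ using injectivity once more.
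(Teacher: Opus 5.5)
Your proposal is correct, and it is the standard argument for this result. The paper gives no proof of its own and only cites Fine--Triantafillou, where the same route is taken:
\begin{itemize}
\item keep $f$ on $\sA$;
\item use that $\sA(G/K)\otimes\bigwedge_H(G/K)$ is a coproduct of graded-commutative $\DGA$s for $K<H$;
\item extend $f|_{A_H}$ to a morphism $\phi:\underline V_H\to\sB$ in $\vc^\ast$ by injectivity of $\sB$;
\item extend freely via $g(sv)=d\phi(v)$.
\end{itemize}
You read $\sA_H(G/K)$ as $\bigwedge_H(G/K)$ for $K<H$ and as $\hom_{\QQ(N(H)/H)}(\QQ(G/H)^K,A_H)$ for $(K)=(H)$. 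That reading is correct. The paper's display appears to have these two cases interchanged, and only your reading makes $i_H$ a quasi-isomorphism.

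\emph{The anticipated obstacle is already handled.} Let $\underline U\subseteq\underline V_H$ be the subfunctor equal to $\underline V_H$ on the orbits $G/H'$ with $(H')=(H)$ and $0$ elsewhere. It is a subfunctor because $\underline V_H(G/K')=0$ whenever $K'$ is not subconjugate to $H$. Any extension $\phi$ of a morphism $\underline U\to\sB$ equals $f|_{A_H}$ exactly at $G/H$, since $\underline U(G/H)=\underline V_H(G/H)\cong A_H$. Naturality of $\phi$ then gives $\sB(\hat e_{K,H})\phi(v)=f(\bar v)$ for $K<H$, where $\bar v=\underline V_H(\hat e_{K,H})(v)$. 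Compatibility on $sv$ follows because $d$ commutes with structure maps. The same naturality, with $\underline V_H(G/K')=0$, handles structure maps to orbits $G/K'$ with $K'$ not subconjugate to $H$. There the $\bigwedge_H$ factor maps by the augmentation.

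\emph{The step you pass over is the one that needs checking.} You must show that $f|_{A_H}$ is a morphism $\underline U\to\sB$ in $\vc^\ast$ at all. This requires two things:
\begin{itemize}
\item $N(H)/H$-equivariance of $f|_{A_H}$;
\item for $H<H'$, $\sB(\hat e_{H,H'})f(a)=f(\sA(\hat e_{H,H'})a)=0$.
\end{itemize}
The second is exactly where $A_H=\bigcap_{H<H'}\ker\sA(\hat e_{H,H'})$ is used.

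Your fallback of adjusting $g$ on $s\underline V_H$ would not be available anyway. The value $g(sv)=d\,g(v)$ is forced for $g$ to commute with differentials. Since no discrepancy can arise, you do not need it.
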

Thus, for a morphism $g:\s A\to \s B$ between diagrams of DGAs, we have an extension $\sI (g):\s I(\s A)\to \s I(\s B)$ between systems of DGAs.

\section{Minimal models for \texorpdfstring{$\sO_{\C_{p^k}}$}{O primary cyclic group}-DGAs}\label{sec: Minimal models for Cpk}
In this section, we study the properties of minimal models of $\sO_{\C_{p^k}}$-$\DGA$s. We first discuss the characterization of an injective coefficient system for $\sO_{\C_{p^k}}$ in terms of its structure maps.

Given a finite group  $G$  and 
$\uV$ injective object in $\vc^\ast$, the structure maps $\uV(\hat e_{K,K'})$ for the subgroups $K' \leq K$ are surjective. 
 For example, to verify this claim for abelian groups, it is enough to show that this holds for $\underline {V}_H$ by \cref{sum inj}. 
    For an abelian group $G$, the fixed point set of the coset $G/H$ is given by

    \begin{equation} \label{eq: coset stabilizer}
        (G/H)^K= \begin{cases}
        G/H, \text{ if } K \leq H\\ \emptyset, \text{ otherwise} 
        \end{cases}
    \end{equation}
 We have \begin{align}
     \uV_H (G/K)&= \hom_{\QQ(NH/H)}(\QQ(G/H)^K, V_H) \nonumber\\
 &=\hom_{\QQ(G/H)}(\QQ(G/H)^K, V_H)\nonumber \\
 &= \begin{cases}
     V_H, &\text{if } K \leq H \\
     0, &\text{ otherwise.}  
 \end{cases}~~~~~~~~~~~~~~~~~~~~~~~\text{          by \eqref{eq: coset stabilizer}} \label{eq: injective on cosets}  \end{align}
Applying the covariant functor $\uV_H$ to the projections $\hat e_{K,K'} : G/K \rightarrow G/K'$ for $ K \leq K'$, we obtain the structure map $$\uV_H(\hat e_{K,K'}) : \uV_H(G/K) \longrightarrow \uV_H(G/K')$$
Using \eqref{eq: injective on cosets}, these maps take one of the following forms:
 \begin{align*}
     &\uV_H(\hat e_{K,K'}) : V_H \longrightarrow V_H, &\text{if } K' \leq H\\
     &\uV_H(\hat e_{K,K'}) : \uV_H(G/K) \longrightarrow 0, &\text{otherwise.}
 \end{align*}
 The first induced map is the identity and the second is zero. Hence the structure maps are surjective.

In particular, when $G=\C_{p^k}$, the condition that the structure maps are surjective is sufficient.   A dual proof for this characterization of injectivity is given by Triantafillou \cite{Tri_Hopf}. For completeness and clarity, we provide a direct proof in the following proposition. 

\begin{proposition}\label{thm: injectivity in Z_p^n}
Let $G=\C_{p^k}$ where $p$ is prime. Then, a diagram of vector spaces
$\u U$ is injective as an element of $\vc^\ast$  if and only if the structure maps $\u U(\hat{e}_{p^i,p^{i-1}})$ are surjective for $1\leq i \leq k.$  Dually, a diagram of vector spaces
$\u U$ is projective as an element of $\vc$  if and only if the structure maps $\u U(\hat{e}_{p^i,p^{i-1}})$ are surjective for $1\leq i \leq k.$    
\end{proposition}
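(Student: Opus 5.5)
One direction has already been handled in the discussion just before the statement. If $\u U$ is injective, then by \cref{sum inj} it decomposes as $\bigoplus_H \u U_H$. For $G=\C_{p^k}$ each summand has the form \eqref{eq: injective on cosets}, so every structure map is either the identity or a map onto $0$. Hence all the maps $\u U(\hat e_{p^i,p^{i-1}})$ are surjective. The real work is the converse, together with the dual statement for projectives.

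For the converse, write $H_i=\C_{p^i}$, so the subgroup lattice is the chain $H_0<H_1<\dots<H_k$. Write $U_i=\u U(G/H_i)$ and $f_i=\u U(\hat e_{H_i,H_{i-1}})\colon U_i\to U_{i-1}$. Following \cref{defn: standard inj env in vec}, set $V_{H_i}=\ker f_i$ for $i\ge 1$ and $V_{H_0}=U_0$. Since every structure map out of $G/H_i$ factors through $f_i$, this is exactly $\bigcap_{H_i<K}\ker \u U(\hat e_{H_i,K})$. Each $V_{H_i}$ is a $\QQ(G/H_i)$-module via the automorphisms $\u U(g)$ of $G/H_i$; these commute with $f_i$ because $G$ is abelian, so the kernel is preserved.

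The aim is to show that the canonical monomorphism $\u U\to \cI=\bigoplus_i \u V_{H_i}$ is an isomorphism, which by \cref{sum inj} gives injectivity. Levelwise, $\cI(G/H_j)=\bigoplus_{i\ge j} V_{H_i}$, and injectivity is already given in \cref{defn: standard inj env in vec}. So it suffices to compare dimensions (the spaces are of finite type) or to build an inverse directly.

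The comparison is an induction on $j$. From the exact sequence $0\to \ker f_j\to U_j\to U_{j-1}\to 0$, surjectivity gives $\dim U_j=\dim V_{H_j}+\dim U_{j-1}$. The induction shows $\dim U_j=\sum_{i\le j}\dim V_{H_i}$, and this matches $\dim\cI(G/H_j)$ once the indexing direction is checked. To avoid finite-dimensionality, I would instead split the sequence using $\QQ(G/H_j)$-equivariant sections, which exist by Maschke's theorem since $G$ is finite over $\QQ$. This builds compatible levelwise splittings $U_j\cong V_{H_j}\oplus U_{j-1}$ that match the structure maps of $\cI$.

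The main obstacle is this index bookkeeping and the check that the splittings are natural for the $G$-automorphisms as well as the projections. Each level must carry the correct $N(H)/H$-action in $\hom_{\QQ(G/H)}(\QQ(G/H)^K,V_H)$, and Maschke's theorem is what makes this work.

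For the dual statement, I would apply vector-space duality $\hom_\QQ(-,\QQ)$. It exchanges $\vc$ and $\vc^\ast$ and turns injectives into projectives. Under this duality, surjective structure maps dualize to injective ones, so the condition should be read in the contravariant sense: the maps induced by the projections are the restrictions $\u U(G/H_{i-1})\to \u U(G/H_i)$. The same kernel/cokernel splitting argument then applies verbatim, giving the decomposition into standard projectives.
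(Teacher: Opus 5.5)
Your proposal follows the paper's proof. The ``only if'' direction is the discussion preceding the statement. For the converse, the nested kernels collapse each $V_{H_i}$ to the single kernel $\ker f_i$, the envelope $\cI$ is computed levelwise, and surjectivity of the chain splits each level as $V_{H_j}\oplus U_{j-1}$, so that $\u{U}$ and $\cI$ agree. Your version is more careful than the paper's, which only compares the two levelwise decompositions abstractly. You route the comparison through the canonical monomorphism $\u{U}\to\cI$, built from $\QQ(G/H_j)$-equivariant retractions $U_j\to V_{H_j}$ (Maschke) and shown bijective by induction on $j$ using surjectivity of $f_j$. That gives a natural isomorphism with no finite-type hypothesis.

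Two corrections are needed.

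First, your indexing is inconsistent. If $H_i=\C_{p^i}$ has order $p^i$, there is no $G$-map $G/H_i\to G/H_{i-1}$, and $V_{H_0}=U_0$ would be $\u{U}(G/e)$ instead of $\u{U}(G/G)$. Adopt the paper's convention that $H_i=(p^i)$ has order $p^{k-i}$, so $G=H_0>H_1>\cdots>H_k=e$. Then $f_i$ and $V_{H_0}=\u{U}(G/G)$ are right, and $\cI(G/H_j)=\bigoplus_{i\le j}V_{H_i}$ (not $i\ge j$), which is exactly what your induction produces.

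Second, the paper's proof does not address the dual statement. Your remark that surjections dualize to injections should be stated outright as a correction. For $\u{U}\in\vc$ the map $\u{U}(\hat e_{p^i,p^{i-1}})$ goes $\u{U}(G/p^{i-1})\to\u{U}(G/p^i)$, and projectivity is equivalent to these maps being \emph{injective}. With ``surjective'' the claim already fails for $\C_p$:
\begin{itemize}
\item The representable system $\C_p/K\mapsto\QQ((\C_p/e)^K)$ is $\QQ(\C_p)$ at $\C_p/e$ and $0$ at $\C_p/\C_p$. It is projective, yet its structure map is not surjective.
\item The system with $\QQ$ at $\C_p/\C_p$ and $0$ at $\C_p/e$ has a surjective structure map but is not projective. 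The levelwise surjection onto it from the constant system $\u{\QQ}$ has no natural section.
\end{itemize}
Finally, $\hom_\QQ(-,\QQ)$ is an anti-equivalence only on finite-type systems. In general, run the cokernel-splitting argument directly, as you suggest.
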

\begin{proof}
We have the following chain of canonical projections:
    \begin{equation}\label{eq: lattice Z_p^n}
            G/p^k \xra{\hat{e}_{p^k,p^{ k-1}}}G/p^{k-1}
\xra{}\cdots \xra{} G/p^{1} \xra{\hat{e}_{p^1,p^{ 0}}}  G/p^{0}.
    \end{equation} 
  
     Let the structure maps $\u U(\hat{e}_{p^i,p^{i-1}})$ be surjective for $1\leq i \leq k.$     
We will show that its injective envelope $\uV$ of $\u U$ is isomorphic to $\u U$ itself, which will conclude the proof. By the construction of the injective envelope, we have $\uV= \bigoplus_{(H)} \uV_H$, where $V_H$ are given by 
\begin{align*}
    &V_H= \bigcap_{K> H} \ker \u U (\hat e_{H,K}), &\text{when } H \neq G\\
    &V_G=\u U(G/G).
\end{align*}
Let  $N_{r,s} = \ker \u U (\hat e_{p^r,p^s})$ for $r>s$. 
 Functoriality of $\u U$ gives $$\u U(\hat e_{p^r,p^s}) =\u U(\hat e_{p^{s+1},p^s}) \circ \u U(\hat e_{p^{s+2},p^{s+1}}) \circ \dots \circ\u U(\hat e_{p^{r-1},p^{r-2}}) \circ \u U(\hat e_{p^r,p^{r-1}}).$$ Therefore $N_{r,s}\subseteq  N_{r,s'}$ for $s\geq s'$ 
 which allows us to obtain
\begin{align}
    &V_{p^r}=\bigcap_{s<r } N_{r,s}=N_{r,r-1}, &\text{where } r=1, \dots , k \label{eq: intersection absorbed 1} \\
    &V_{p^0}= \u U (G/G).\nonumber
\end{align}
It follows that
\begin{align*}
    {\uV_{p^r}}(G/p^i)&=\hom_{\QQ(G/{p^{r}})}(\QQ(G/{p^{r}})^{(p^i)}, {V_{p^r}})\\
&=\begin{cases}
    V_{p^r} , &\text{ if } i \geq r\\
    0 , &\text{ if } i < r.
\end{cases}
\end{align*}
Hence, \begin{align}
   {\uV}(G/p^i)&=\bigoplus_{r=0}^k {\uV_{p^r}}(G/p^i)=\bigoplus_{r\leq i} {{V_{p^r}}} \nonumber \\
   &=\big(\bigoplus_{r=1}^i {N_{r,r-1}}\big) \oplus \u U(G/G). \label{eq: inj sumsplits}
\end{align}
    Applying $\u U$ on \eqref{eq: lattice Z_p^n}, we have
\begin{equation}\label{eq: A-lattice Z_p^n}
           \u U(G/p^k) \xrightarrow{\u U(\hat{e}_{p^k,p^{ k-1}})} \u U(G/p^{k-1})
\longrightarrow \cdots \longrightarrow \u U(G/p^1 ) \xrightarrow{\u U(\hat{e}_{p^1,p^{0}})} \u U(G/p^0) . \end{equation}
Since each arrow is surjective, we have
\begin{equation}\label{eq: A as sumsplits}
   \u U(G/p^i) \cong \big(\bigoplus_{r=1}^i {N_{r,r-1}}\big) \oplus \u U(G/G).
\end{equation}
Comparing \eqref{eq: inj sumsplits} and \eqref{eq: A as sumsplits}, we have $\u U \cong \uV.$
\end{proof}

\begin{corollary}
     Let $X$ be a $G=\C_{p^k}$-space. Consider the  $X^m$ as a $G$-space with the diagonal $G$-action. If $\uH^\ast(X)$ is injective then so is $\uH^\ast(X^m)$. 
\end{corollary}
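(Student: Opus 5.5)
Since $G=\C_{p^k}$, the criterion of \cref{thm: injectivity in Z_p^n} reduces the claim to a statement about structure maps. Injectivity of $\uH^\ast(X^m)$ in $\vc^\ast$ is equivalent to surjectivity of
\[
\uH^\ast(X^m)(\hat e_{p^i,p^{i-1}})\colon \H^\ast((X^m)^{p^i})\to \H^\ast((X^m)^{p^{i-1}})
\]
for $1\le i\le k$, where $p^i$ denotes the subgroup of order $p^{k-i}$ as in the indexing of that proposition. Whatever the indexing convention, these are the maps induced by inclusions of fixed point sets $X^{K}\hookrightarrow X^{K'}$ for $K'\le K$. The hypothesis gives exactly that each such map $\H^\ast(X^{K'})\to \H^\ast(X^{K})$ is surjective.

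The first step is to identify the fixed point sets of the diagonal action. A tuple $(x_1,\dots,x_m)$ is fixed by $K$ iff each $x_j\in X^K$, so $(X^m)^K=(X^K)^m$. Moreover, the inclusion $(X^{K})^m\hookrightarrow (X^{K'})^m$ is the $m$-fold product of the inclusion $\iota\colon X^K\hookrightarrow X^{K'}$.

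The second step applies the Künneth theorem over $\QQ$. The finite type hypothesis on $X$ makes each $X^K$ have finite-dimensional rational cohomology in each degree, so
\[
\H^\ast((X^K)^m)\cong \H^\ast(X^K)^{\otimes m},
\]
naturally in $K$. Under this identification the structure map of $\uH^\ast(X^m)$ becomes $(\iota^\ast)^{\otimes m}$. A tensor product of surjective linear maps of graded rational vector spaces is surjective, and in each total degree only finitely many summands occur. Hence every structure map of $\uH^\ast(X^m)$ is surjective, and \cref{thm: injectivity in Z_p^n} yields injectivity.

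The only point needing care is the naturality of the Künneth isomorphism with respect to the inclusions of fixed point sets, which makes the identification of structure maps with $(\iota^\ast)^{\otimes m}$ legitimate. This follows from naturality of the cross product, so I expect no real obstacle. The argument works for any $m\ge 1$, since it never uses the relation between $m$ and the group order.
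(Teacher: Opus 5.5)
Your proposal is correct and follows the paper's proof. Both arguments reduce, via \cref{thm: injectivity in Z_p^n}, to surjectivity of the consecutive structure maps. Both then identify the structure maps of $\uH^\ast(X^m)$ with $m$-fold tensor powers of those of $\uH^\ast(X)$ via the K\"unneth formula, and conclude that these are surjective. Your explicit identification $(X^m)^K=(X^K)^m$, and your remarks on finite type and on naturality of the K\"unneth isomorphism, spell out details the paper leaves implicit.
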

\begin{proof}    By hypothess, the cohomology diagram $\s A=$$\uH^\ast(X)$, that is, $\s A(G/{p^i})=\H^\ast(X^{(p^i)})$, is injective, therefore, the structure maps 
       $\s A(\hat e_{p^i, p^{i-1}}): \s A (G/p^i) \rightarrow \s A(G/p^{i-1}) 
    $ are surjective, for $i=1 , \dots, k$ (using \Cref{thm: injectivity in Z_p^n}).
   Let $\mathscr B=$$\uH^\ast(X^m)$. Applying the K\"unneth formula, it follows that the map induced by $\s B(\hat e_{p^i, p^{i-1}}):\mathscr B (G/p^i) \rightarrow \mathscr B(G/p^{i-1}) $ can be identified with $ \s A(\hat e_{p^i, p^{i-1}})^{\otimes m}: \s A (G/p^i) ^{\otimes m}\rightarrow \s A(G/p^{i-1}) ^{\otimes m}$ which is surjective. Hence, by \cref{thm: injectivity in Z_p^n}, $\uH^\ast(X^m)$ is also injective in $\vc^\ast.$ 

\end{proof}
In the following example, we explicitly prove non-injectivity of the given diagram of $\DGA$s, thus noting the ease of the above characterization of injectivity. 

\begin{example}
Consider the rational coefficient system $\u V$ in $\vc^\ast$ for $G=\C_{p^2}$ given by
$$\u V=
\begin{cases}
\QQ(x), & \text{at } \C_{p^2}/e,\\
\QQ(x,x'), & \text{at } \C_{p^2}/\C_p,\\
\QQ(x'), & \text{at } \C_{p^2}/\C_{p^2},
\end{cases}$$
Suppose the structure maps are given by $
\u V(\hat e_{e,\C_p}) : x\mapsto x,
$
and
$
\u V(\hat e_{\C_p,\C_{p^2}}): x\mapsto 0,$ $
x'\mapsto x'.
$
Since $\u V(\hat e_{e,\C_p})$ is not surjective, $\u V$ is not injective in
$\vc^\ast$. To verify this directly, consider the monomorphism
$j:\u A\hookrightarrow \u B$ given by the inclusion
$$\u A=
\begin{cases}
(0), & \text{at } \C_{p^2}/e,\\
\QQ(a), & \text{at } \C_{p^2}/\C_p,\\
\QQ(a), & \text{at } \C_{p^2}/\C_{p^2},
\end{cases}
\qquad\longrightarrow\qquad
\u B=
\begin{cases}
\QQ(a), & \text{at } \C_{p^2}/e,\\
\QQ(a), & \text{at } \C_{p^2}/\C_p,\\
\QQ(a), & \text{at } \C_{p^2}/\C_{p^2}.
\end{cases}$$
where the structure maps are the obvious injections.
Define a morphism $f:\u A\to\u V$ by sending the generator $a$ to $x'$ at each
orbit. We claim that $f$ admits no extension
$\tilde f:\u B\to\u V$, that is, there is no lift making the diagram
$$\begin{tikzcd}
	{\u A} & {\u V} \\
	{\u B}
	\arrow["f", from=1-1, to=1-2]
	\arrow["j"', hook, from=1-1, to=2-1]
	\arrow["{\tilde f}"', dashed, from=2-1, to=1-2]
\end{tikzcd}$$
commutes. Indeed, if such a lift $\tilde f$ existed, then the naturality of $\tilde f$
with respect to the morphism
$\hat e_{e,\C_p}:\C_{p^2}/e\to\C_{p^2}/\C_p$
would force the following diagram to commute:
$$\begin{tikzcd}
	0 && 0 \\
	a & a & x' \\
	& a
    \arrow[draw=green, from=1-1, to=1-3, bend left=20]
	\arrow[from=1-1, to=2-1, "{\u A(\hat e_{e,\C_p})}"']
	\arrow[draw=blue, from=1-1, to=2-2, bend left=15]
	\arrow[from=1-3, to=2-3, "{\u V(\hat e_{e,\C_p})}"]
	\arrow[draw=green, from=2-1, to=2-3, bend left=20]
	\arrow[draw=blue, from=2-1, to=3-2, bend left=15]
	\arrow[draw=red, dashed, from=2-2, to=1-3, bend left=15]
	\arrow[from=2-2, to=3-2, "{\u B(\hat e_{e,\C_p})}"]
	\arrow[draw=red, dashed, from=3-2, to=2-3, bend left=15]
\end{tikzcd}$$
However, this is impossible, since the square involving $\tilde f(\C_{p^2}/e)$ and $\tilde f(\C_{p^2}/\C_p)$ (red arrows) can not commute. This contradiction
shows that no such lift exists. Hence $\u V$ is not injective in $\vc^\ast$.
\end{example}
We now show that injective resolutions of $\sO_{\C_{p^k}}-\DGA$s simplify considerably. This simplification is a key ingredient in obtaining an explicit description of $\C_{p^k}$-minimal systems. 
\begin{proposition}\label{cpk resolution length 1} 
    Let $\uV$ be a $\C_{p^k}$-dual rational coefficient system. Then $\u V$ admits an injective resolution of length $1.$
\end{proposition}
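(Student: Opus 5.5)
The plan is to use the characterization of injectives from \cref{thm: injectivity in Z_p^n}: a $\C_{p^k}$-dual coefficient system is injective exactly when every structure map $\u U(\hat e_{p^i,p^{i-1}})$ is surjective. Given $\uV$, embed it into its injective envelope $\uV_0$ via \cref{defn: standard inj env in vec}. It then suffices to show that the cokernel $\uV_0/\uV$ is already injective. In that case $0\to\uV\to\uV_0\to\uV_0/\uV\to 0$ is an injective resolution of length $1$, with $\uV_1=\uV_0/\uV$ and all later terms zero.

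The key step is to check that the structure maps of the quotient are surjective. Write $\u Q=\uV_0/\uV$. For each $i$ we have a commutative square
\[
\begin{array}{ccc}
\uV_0(G/p^i) & \longrightarrow & \u Q(G/p^i)\\
\downarrow & & \downarrow\\
\uV_0(G/p^{i-1}) & \longrightarrow & \u Q(G/p^{i-1}).
\end{array}
\]
The horizontal maps are quotient maps, so they are surjective. The left vertical map is surjective by \cref{thm: injectivity in Z_p^n}, since $\uV_0$ is injective. The composite $\uV_0(G/p^i)\to \u Q(G/p^{i-1})$ is therefore surjective. Because this composite factors through $\u Q(\hat e_{p^i,p^{i-1}})$, that map is surjective too. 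Applying \cref{thm: injectivity in Z_p^n} once more shows $\u Q$ is injective.

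This argument works for any injective $\uV_0$ containing $\uV$, not only the envelope. The general point is that quotients of objects with surjective structure maps again have surjective structure maps. This is where the totally ordered subgroup lattice of $\C_{p^k}$ is used: it reduces injectivity to surjectivity along a single chain.

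The only thing left to confirm is that the construction in \cref{construction of elementary extn} matches this picture. It embeds $\coker w_0=\uV_0/\uV$ into its injective envelope $\uV_1$. Since $\coker w_0$ is already injective, the proof of \cref{thm: injectivity in Z_p^n} shows the envelope embedding is an isomorphism, so $\uV_1\cong \uV_0/\uV$. Then $w_1$ has zero cokernel, and every later term is $0$.

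The main potential obstacle is making sure that ``surjective structure maps'' has the right direction in the covariant convention, i.e. that the maps are $G/p^i\to G/p^{i-1}$ as in \cref{thm: injectivity in Z_p^n}. Given that, the diagram chase is immediate.
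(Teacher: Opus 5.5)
Your argument is correct and is essentially the paper's proof. The paper also embeds $\uV$ in its injective envelope $\uV_0$, uses the commutative square with the quotient maps to show that the structure maps of $\uV_0/\uV$ are surjective, and concludes via \cref{thm: injectivity in Z_p^n} that $\uV\hookrightarrow\uV_0\to\uV_0/\uV\to 0$ is already an injective resolution. (There is a harmless indexing slip in your last paragraph: in the construction it is $\uV_0/\uV$ that embeds into $\uV_1$, whereas $\coker w_0=\uV_1/\operatorname{im} w_0$, which is zero here, so $\uV_2=0$ and every later term vanishes.)
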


\begin{proof}
    
     Let $w:\uV\hookrightarrow \uV_0$ be the injective envelope of $\uV.$ By injectivity of $\uV_0$ in $\vc^\ast$, the map $\uV_0(\hat e_{H,K}): \uV_0(G/H) \ra \uV_0(G/K)$ is surjective for all subgroups $H\leq K$ of $G=\C_{p^k}$. By commutativity of the following diagram:
     $$\begin{tikzcd}
 	{\uV_0(G/H)} & {\uV_0(G/H)/w(G/H)(\uV(G/H))} \\
 	{\uV_0(G/H)} & {\uV_0(G/K)/w(G/K)(\uV(G/K))}
 	\arrow[from=1-1, to=1-2]
 	\arrow["{\uV_0(\hat e_{H,K})}", from=1-1, to=2-1]
 	\arrow["{\uV_0(\hat e_{H,K})}", from=1-2, to=2-2]
 	\arrow[from=2-1, to=2-2],
 \end{tikzcd}$$
     we have $\coker w(G/H) \ra \coker w(G/K)$ is surjective. Hence, $\coker w$ is injective in $\vc^\ast.$ Therefore, we have the following injective resolution $\uV\hookrightarrow \uV_0 \ra \u V_1\ra 0$, where $\u V_1= \coker w$. This completes the proof.
 \end{proof}
\begin{remark}\label{explicit form of cpk minimal system}
    Note that the above proposition gives a more compact description of the elementary extension when $G=\C_{p^k}$.  For the $n$-th stage of the minimal system construction, we take an elementary extension of $\s M_{n-1}$ using some $\uV_n$ in $\vc^\ast$. Let $\uV_{n,0}$ and $\uV_{n,1}$ be obtained from the injective resolutions.
Then the commutative ladder in the elementary extension construction \cref{construction of elementary extn}   reduces to
   $$ \begin{tikzcd}
	{\underline{V}_n} & {\underline{V}_{n,0}} & {\underline{V}_{n,1}} & 0 \\
	{{Z}^{n+1}(\mathscr M_{n-1})} & {\mathscr M_{n-1}^{n+1}} & {\mathscr M_{n-1}^{n+2}} & {\mathscr M_{n-1}^{n+3}}
	\arrow[hook, from=1-1, to=1-2]
	\arrow["\alpha"', from=1-1, to=2-1]
	\arrow["{w_0}", from=1-2, to=1-3]
	\arrow["{\alpha_0}"', from=1-2, to=2-2]
	\arrow[from=1-3, to=1-4]
	\arrow["{\alpha_1}"', from=1-3, to=2-3]
	\arrow[hook, from=2-1, to=2-2]
	\arrow["d", from=2-2, to=2-3]
	\arrow["d", from=2-3, to=2-4]
\end{tikzcd}$$
Therefore, for a system of $\C_{p^k}$-$\DGA$s, the minimal models are  of the form:
    $ \sM = \bigcup_n \sM_n, $
    where $\sM_n = \QQ(\uV_{2,0} \oplus \uV_{2,1} \oplus \uV_{3,0} \oplus \uV_{3,1} \oplus \dots \oplus \uV_{n,0} \oplus \uV_{n,1})$. Here, the degrees are given by $|\uV_{n,j}| = n+j$, and the differentials satisfy $d(\uV_{n,0}) \subseteq \sM_{n-1}^{n+1} \oplus \uV_{n,1}$ and $d(\uV_{n,1}) \subseteq \sM_{n-1}^{n+2}$. More explicitly, $d|_{\uV_{n,0}} = \alpha_0 + w_0$ and $d|_{\uV_{n,1}} = -\alpha_1$. Furthermore, since $\uV_{n,1} \cong \uV_{n,0}/\ker w_0$, The map  $\alpha_1$ can be identified with the map induced by passing $d\circ\alpha_0:\u V_{n,0}\to \s A^{n+2}$ to the quotient.
\end{remark}
Therefore, we obtain the following explicit characterization of a $\C_{p^k}$-minimal system.

\begin{proposition}\label{prop: explicit minimal cons cpk}
    A $\C_{p^k}$-minimal system $\s M$ is a system of $\DGA$s freely generated by injective rational coefficient systems of the form:
    $ \s M = \QQ\left( \bigoplus_{i\geq 0} \bigoplus_{j=0,1} \u V_{i,j} \right) .$ That is, $\s M = \bigcup_n \s M_n$, where $\s M_0 = \s M_1 = \u\QQ$ and $\s M_n = \s M_{n-1} \otimes \QQ(\u V_{n,0} \oplus \u V_{n,1})$ with degrees $|\u V_{i,j}| = i+j$, such that the differential $d$ satisfies:
    \begin{enumerate}
        \item $d(\u V_{n,0}) \subseteq \s M_{n-1}^{n+1} \oplus \u V_{n,1}$ such that $p_2 \circ d: \u V_{n,0} \to \u V_{n,1}$ is surjective and $\u V_{n,0}$ is an injective envelope of $\ker(p_2 \circ d)$.
        \item $d(\u V_{n,1})$ is induced by the composition:
        $ \u V_{n,0} \xrightarrow{p_1 \circ d} \s M_{n-1}^{n+1} \xrightarrow{-d} \s M_{n-1}^{n+2} $
    \end{enumerate}
    where $p_i$ denotes the projection onto the $i$-th component of $\s M_{n-1}^{n+1} \oplus \u V_{n,1}$ for $i = 1, 2$.
\end{proposition}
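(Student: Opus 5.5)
The plan is to unwind \cref{def:minimal} using the elementary extension construction of \cref{construction of elementary extn}, specialized via \cref{cpk resolution length 1} and \cref{explicit form of cpk minimal system}. Let $\s M$ be a $\C_{p^k}$-minimal system, so $\s M=\bigcup_n \s M_n$ with $\s M_0=\s M_1=\u\QQ$ and $\s M_n=\s M_{n-1}^{\alpha}(\u V_n)$. By \cref{cpk resolution length 1}, the injective resolution of $\u V_n$ can be taken to be $\u V_n\hookrightarrow \u V_{n,0}\xrightarrow{w_0}\u V_{n,1}\to 0$. Here $\u V_{n,0}$ is the injective envelope of $\u V_n$, and $\u V_{n,1}=\coker(\u V_n\to\u V_{n,0})$ is already injective. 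So every $\u V_{n,i}$ with $i\ge 2$ is zero, and the extension is $\s M_n=\s M_{n-1}\otimes\QQ(\u V_{n,0}\oplus\u V_{n,1})$ with $|\u V_{n,j}|=n+j$. Taking the union over $n$ gives the stated global form $\QQ(\bigoplus_{i}\bigoplus_{j}\u V_{i,j})$. Each generating system is injective in $\vc^\ast$, and the tensor product is injective (as noted in the construction, citing \cite{Golasinski1997}).

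For condition (1), the construction sets $d|_{\u V_{n,0}}=w_0+\alpha_0$, where $\alpha_0$ lands in $\s M_{n-1}^{n+1}$ and $w_0$ lands in $\u V_{n,1}$. Hence $p_1\circ d=\alpha_0$ and $p_2\circ d=w_0$. The map $w_0$ is the quotient map $\u V_{n,0}\to\u V_{n,0}/\u V_n=\u V_{n,1}$, so it is surjective. Its kernel is exactly the image of $\u V_n$, and $\u V_{n,0}$ was chosen as the injective envelope of $\u V_n$. Therefore $\u V_{n,0}$ is an injective envelope of $\ker(p_2\circ d)$.

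For condition (2), the construction sets $d|_{\u V_{n,1}}=-\alpha_1$ (the sign is $(-1)^1$). The ladder requires $\alpha_1\circ w_0=d\circ\alpha_0$. Since $w_0$ is surjective, $\alpha_1$ is uniquely determined as the map induced on the quotient $\u V_{n,0}/\ker w_0\cong\u V_{n,1}$ by $d\circ\alpha_0=d\circ p_1\circ d$. This is well defined because $d\alpha_0$ vanishes on $\u V_n$: on $\u V_n$, $\alpha_0$ restricts to $\alpha$, which lands in the cocycles $Z^{n+1}$. So $d(\u V_{n,1})$ is induced by $-d\circ p_1\circ d$, as claimed; this is just \cref{explicit form of cpk minimal system}.

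The step needing the most care is the converse reading of the statement: that any system with these properties is minimal in the sense of \cref{def:minimal}. Given data satisfying (1) and (2), I would set $\u V_n:=\ker(p_2\circ d)$ and $\alpha:=p_1\circ d|_{\u V_n}$. I would then check that $\alpha$ lands in $Z^{n+1}(\s M_{n-1})$. Since $d^2=0$, for $v\in\u V_n$ we get $0=d(dv)=d\alpha(v)+d(p_2 dv)=d\alpha(v)$. Next, $\u V_n\hookrightarrow\u V_{n,0}\to\u V_{n,1}\to 0$ is an injective resolution with $\u V_{n,0}$ an envelope. The maps $\alpha_0=p_1\circ d$ and $\alpha_1$ then fill the ladder, so $\s M_n$ is precisely the elementary extension $\s M_{n-1}^{\alpha}(\u V_n)$. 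The main obstacle is matching the signs and the identification of the differential with the construction's convention. I expect this to be routine once $d^2=0$ is applied on $\u V_{n,0}$.
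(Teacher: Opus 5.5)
Your proposal is correct and follows essentially the same route as the paper. The forward direction specializes the elementary-extension ladder to the length-one resolutions of \cref{cpk resolution length 1}, which is \cref{explicit form of cpk minimal system}; the converse sets $\underline{V}_n=\ker(p_2\circ d)$ and $\alpha=p_1\circ d|_{\underline{V}_n}$ and identifies $\mathscr{M}_n$ with $\mathscr{M}_{n-1}^{\alpha}(\underline{V}_n)$. The only cosmetic difference is that you check $\alpha$ lands in $Z^{n+1}(\mathscr{M}_{n-1})$ via $d^2=0$ directly, while the paper reads it off the square $d\alpha_0=\alpha_1 w_0$ supplied by condition (2); these are the same identity.
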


\begin{proof}
    It follows from \cref{explicit form of cpk minimal system} that a minimal system satisfies the mentioned conditions.
    
    Conversely, assume $\s M$ satisfies these conditions. In this case, we claim that $\s M_n$ is obtained by an elementary extension $\s M_{n-1}^\alpha(\u V_n)$, where $\u V_n = \ker(p_2 \circ d: \u V_{n,0} \to \u V_{n,1})$ and $\alpha$ is given by the composition $\u V_n \hookrightarrow \u V_{n,0} \xrightarrow{p_1 \circ d} \s M_{n-1}^{n+1}$.

    Let $w_0 = p_2 \circ d$, $\alpha_0 = p_1 \circ d$, and $\alpha_1 = -d|_{\u V_{n,1}}$. From the information given in the conditions, we obtain the following commutative diagram:
    $$\begin{tikzcd}
        {\ker w_0} & {\u V_{n,0}} & {\u V_{n,1}} \\
        {Z^{n+1}(\s M_{n-1})} & {\s M_{n-1}^{n+1}} & {\s M_{n-1}^{n+2}}
        \arrow[hook, from=1-1, to=1-2]
        \arrow[dashed, "\alpha"', from=1-1, to=2-1]
        \arrow["w_0", two heads, from=1-2, to=1-3]
        \arrow["\alpha_0"', from=1-2, to=2-2]
        \arrow["\alpha_1"', from=1-3, to=2-3]
        \arrow[from=2-1, to=2-2]
        \arrow["d"', from=2-2, to=2-3]
    \end{tikzcd}$$
    We note that the restriction of $\alpha_0$ to $\ker w_0$ lands in $Z^{n+1}(\s M_{n-1})$, since
     $d \circ \alpha_0(\ker w_0) = \alpha_1 \circ w_0(\ker w_0) = 0.$
    Therefore, from the construction in \cref{construction of elementary extn}, it follows that $\s M_n$ is obtained via an elementary extension through $\alpha: \u V_n \to Z^{n+1}(\s M_{n-1})$.
\end{proof}

\begin{example}
We illustrate the explicit construction of a $\C_{p^2}$-minimal system given an injective coefficient system with differentials satisfying the conditions of \cref{prop: explicit minimal cons cpk}. We start with $\s M_0=\s M_1=\u\QQ.$

Let
$$\u V_{2,0}=
\begin{cases}
(x), & \text{at } \C_{p^2}/e,\\
(x), & \text{at } \C_{p^2}/\C_p,\\
(x), & \text{at } \C_{p^2}/\C_{p^2},
\end{cases}
\qquad
\u V_{2,1}=
\begin{cases}
(y), & \text{at } \C_{p^2}/e,\\
(y), & \text{at } \C_{p^2}/\C_p,\\
0, & \text{at } \C_{p^2}/\C_{p^2},
\end{cases}$$
where the structure maps are the obvious surjections, identity and the zero maps appropriately. Assume that the differential on $\u V_{2,0}$ is given by
$dx=y$
at $\C_{p^2}/e$. Naturality conditions will imply 
$$dx=y \quad\text{at } \C_{p^2}/\C_p,
\qquad
dx=0 \quad\text{at } \C_{p^2}/\C_{p^2}.$$
The differential on $\u V_{2,1}$ is then forced to be zero by the commutativity of the following diagram:
$$\begin{tikzcd}
	{\u V_{2,0}} & {\u V_{2,1}} \\
	{\s M_1^3=0} & {\s M_1^4}
	\arrow["{p_2\circ d}", two heads, from=1-1, to=1-2]
	\arrow["{p_1\circ d}"', from=1-1, to=2-1]
	\arrow["d", dashed, from=1-2, to=2-2]
	\arrow["-d"', from=2-1, to=2-2]
\end{tikzcd}$$

Hence,
$$\s M_2=
\begin{cases}
\QQ(x,y), & \text{at } \C_{p^2}/e,\\
\QQ(x,y), & \text{at } \C_{p^2}/\C_p,\\
\QQ(x), & \text{at } \C_{p^2}/\C_{p^2},
\end{cases}$$
where $|x|=2$, $|y|=3$, and the differential at $\C_{p^2}/e$ is given by
$
dx=y,$ and $dy=0.
$

Next, let
$$\u V_{3,0}=
\begin{cases}
(z), & \text{at } \C_{p^2}/e,\\
(z), & \text{at } \C_{p^2}/\C_p,\\
(z), & \text{at } \C_{p^2}/\C_{p^2},
\end{cases}
\qquad
\u V_{3,1}=
\begin{cases}
(w), & \text{at } \C_{p^2}/e,\\
0, & \text{at } \C_{p^2}/\C_p,\\
0, & \text{at } \C_{p^2}/\C_{p^2}.
\end{cases}$$

Suppose the differential on $\u V_{3,0}$ is defined by
$dz=x^2+w$
at $\C_{p^2}/e$. Naturality then implies
$dz=x^2$
at both $\C_{p^2}/\C_p$ and $\C_{p^2}/\C_{p^2}$. The differential on $\u V_{3,1}$ is determined by the commutativity of the following diagram:
$$\begin{tikzcd}
	{\u V_{3,0}(\C_{p^2}/e)} &&& {\u V_{3,1}(\C_{p^2}/e)} \\
	& z & w \\
	& {x^2} & {-2xdx=-2xy} \\
	{Z^4(\s M_2)(\C_{p^2}/e)} &&& {\s M_1^4(\C_{p^2}/e)}
	\arrow["{p_2\circ d}", two heads, from=1-1, to=1-4]
	\arrow["{p_1\circ d}"', from=1-1, to=4-1]
	\arrow["d", dashed, from=1-4, to=4-4]
	\arrow[maps to, from=2-2, to=2-3]
	\arrow[maps to, from=2-2, to=3-2]
	\arrow[dashed, maps to, from=2-3, to=3-3]
	\arrow[maps to, from=3-2, to=3-3]
	\arrow["{-d}"', from=4-1, to=4-4]
\end{tikzcd}$$

Hence,
$dw=-2xy$
on $\u V_{3,1}(\C_{p^2}/e)$.
Therefore,
$$\s M_3=
\begin{cases}
\QQ(x,y,z,w), & \text{at } \C_{p^2}/e,\\
\QQ(x,y,z), & \text{at } \C_{p^2}/\C_p,\\
\QQ(x,z), & \text{at } \C_{p^2}/\C_{p^2},
\end{cases}$$
where
$
|x|=2,$ $
|y|=3,$ $
|z|=4,$ $
|w|=5,
$
and the differential at $\C_{p^2}/e$ is given by
$
dx=y,$ $
dy=0,$ $
dz=x^2+w,$ $
dw=-2xy.
$
\end{example}

As seen in the previous example, even with a clear description of minimal systems, it is not easy to find the minimal model for a given diagram of $\DGA$s. We now explore situations in which non-equivariant model data at each stage of a system of differential graded algebras ($\DGA$s) can be combined to construct an equivariant minimal model.

 Given a $\DGA$ $A$ equipped with an action of a group $G$ by $\DGA$-automorphisms, one may regard $A$ as a $\QQ(G)$-module by forgetting its additional algebraic structure; see \cite[Definition 5]{fine1993equivariant}.
 The existence of a $G$-action on the (non-equivariant) minimal model induced by a $G$-action on a $\DGA$ is established in \cite[Theorem 1.3]{GHVP1978}, where a sketch of the proof is provided. We give a proof based on the ideas developed in \cite[Theorem 10.3]{PGJM}.

We also note that the definition of the $n$-th relative cohomology used in \cite{PGJM} is shifted by one degree in our convention, so that their $n$-th relative cohomology corresponds to the $(n-1)$-th relative cohomology in our notation. This convention is consistent with those adopted in \cite{TR1982, LS2002}.

\begin{theorem}
\label{thm: minimal model for each level}
    Let  $A$  be a $\DGA$ equipped with an action of a group $G$, and let $M \xrightarrow{\rho} A$ be its non-equivariant minimal model. Then $M$ naturally inherits a $G$-action such that  $\rho$ becomes a $G$-equivariant map.
\end{theorem}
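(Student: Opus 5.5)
We follow the inductive construction of the minimal model in \cite[Theorem 10.3]{PGJM} and check that every choice can be made $G$-equivariantly. Since $G$ is finite and we work over $\QQ$, the group algebra $\QQ(G)$ is semisimple by Maschke's theorem. Hence every $\QQ(G)$-module is both projective and injective, and every $G$-stable subspace of a $G$-module has a $G$-stable complement. The plan is to build $M=\bigcup_n M_n$ with $M_n=M_{n-1}\otimes\QQ(V_n)$ a Hirsch extension, and at each stage to put a $G$-action on $V_n$ by $\QQ(G)$-module maps for which the differential $d\colon V_n\to M_{n-1}^{n+1}$ and the map $\rho_n\colon M_n\to A$ are $G$-equivariant. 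The action on $M_n$ is then the unique one by algebra automorphisms extending the given linear actions on generators. It commutes with $d$ because it does so on generators.

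\textbf{Inductive step.} Start with $M_0=M_1=\QQ$, which carries the trivial action; then $\rho_1$ is the unit map and is clearly equivariant. Suppose $\rho_{n-1}\colon M_{n-1}\to A$ is equivariant and induces an isomorphism on cohomology in degrees $<n$ and a monomorphism in degree $n$. Form the mapping cone (relative complex) $C(\rho_{n-1})$. It carries a $G$-action because $\rho_{n-1}$ is equivariant, so $V_n:=\H^{n}(M_{n-1},A)$ (our shifted convention) is a $\QQ(G)$-module. We need a lift $V_n\to Z^{n}$ of relative cocycles, i.e.\ a pair of maps $V_n\to Z^{n+1}(M_{n-1})$ and $V_n\to A^{n}$, which define the differential and $\rho_n$ on the new generators. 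This is where semisimplicity enters: the surjection from relative cocycles onto $V_n$ is a map of $\QQ(G)$-modules, so it has a $G$-equivariant section. Equivalently, average any linear section over $G$ by $s\mapsto \frac{1}{|G|}\sum_g g s g^{-1}$. Composing with the two components gives equivariant maps $d$ and $\rho_n$ on $V_n$. The standard argument of \cite{PGJM} then shows that $\rho_n$ has the required cohomological properties; that argument is purely linear-algebraic, so it is unaffected by our choices.

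\textbf{Passage to the colimit and the main obstacle.} Taking $M=\bigcup M_n$ and $\rho=\operatorname{colim}\rho_n$ gives an equivariant quasi-isomorphism, since each $\rho_n$ is equivariant and compatible with inclusions. The step most likely to need care is arranging that the chosen lift lands in the correct cocycle module equivariantly while keeping minimality, i.e.\ $d(V_n)\subseteq \QQ^+\cdot\QQ^+$. In the simply connected case this is automatic for degree reasons: $M_{n-1}$ has no generators in degree $n+1$ below degree $n$, so $M_{n-1}^{n+1}$ is decomposable.

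\textbf{Meaning of ``naturally.''} The statement says $M$ \emph{naturally} inherits an action, and the construction above involves the choice of an equivariant section. So we should also remark that the induced action is unique up to equivariant homotopy. The plan is to apply the lifting property of minimal algebras to $g^{-1}\colon A\to A$ and use uniqueness up to homotopy, following \cite[Theorem 1.3]{GHVP1978}. If only existence is intended, this remark can be omitted.
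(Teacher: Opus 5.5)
Your proposal is correct and is essentially the paper's proof. Both arguments induct over Hirsch extensions as in \cite[Theorem 10.3]{PGJM}, make $V_n=\H^n(A,M_{n-1})$ a $\QQ(G)$-module through the relative complex (using that $\rho_{n-1}$ is equivariant), take an equivariant section of the projection from relative cocycles onto $V_n$ by semisimplicity of $\QQ(G)$, read off $d$ and $\rho_n$ on the new generators from its two components, and defer the cohomological verification to the non-equivariant argument of \cite[Lemma 10.4]{PGJM}. Your extra remarks (the explicit averaging formula, the decomposability check, and the comment on uniqueness up to homotopy) go beyond what the paper records but do not change the argument.
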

\begin{proof}
    A non-equivariant minimal model is of the form $M=\cup_{k\geq0} M_k$ with $M_0 = \mathbb{Q}$. Note that $M_0$ has a trivial $G$-action. Assume by induction that  $M_{n-1}$ admits a $G$-action and that the map  $\rho_{n-1}: M_{n-1} \to A$ is $G$-equivariant which is $(n-1)$-isomorphism. We aim to show that this structure extends to $M_n$, and $\rho_n$ becomes an $n$-isomorphism.

    Let $V$ be the relative cohomology group defined by
    $
    V := \H^{n}(A,M_{n-1}).
    $
    We recall that the cohomology is computed using the chain complex $R$ with components $R^i = M_{n-1}^{i+1} \oplus A^i$, and the differential given by:
    $
    d(m, a) := (dm, \rho_{n-1}(m) - da),$ for $(m, a) \in R^i.
    $
    This construction equips both $R$ and $V$ with the structure of $\QQ(G)$-modules via the diagonal action, and $\rho_{n-1}$ is $G$-equivariant. Therefore, the differential is also $G$-equivariant.

    Next, we define
    $
    M_n := M_{n-1} \otimes \mathbb{Q}(V).
    $
    Since $\QQ(G)$-modules are semisimple, the projection
    $
    Z^{n}(A, M_{n-1}) \to \H^{n}(A,M_{n-1})
    $
    admits a $\QQ(G)$-module section
    $
    s: \H^{n}(A,M_{n-1}) \to Z^{n}(A, M_{n-1}).
    $

    For each $v \in V$, let  $s(v) = (m_v, a_v) \in M_{n-1}^{n+1} \oplus A^n$. Then  $dm_v = 0$ and $da_v = \rho_{n-1}(m_v)$. Define the differential and $\rho_n:M_n\to A$ by:
    $
    dv = m_v, $ $ \rho_n(v) = a_v.
    $
    We note that $d, \rho$ are $G$-equivariant since $s$ is $G$-equivariant.
    This defines a $G$-equivariant extension of the minimal algebra $M_{n-1}$. One can show $\H^i(A,M_n)=0$ for all $i\leq n$ by following the non-equivariant arguments given in \cite[Lemma 10.4]{PGJM}. This concludes the proof.
\end{proof}

\begin{remark}\label{rem: insufficiency to be a correct model}
    In \cite{GHVP1978},  minimal models equipped with a group action are referred to as equivariant minimal models. However, these models do not capture the equivariant rational homotopy type of spaces. To illustrate this, we consider $\C_2$-spaces $X$ and $Y$,  both are homeomorphic to $S^5$, but equipped with different involutions as follows:
    \begin{align*}    
    &\xi \cdot (x_1, x_2, x_3, x_4, x_5, x_6) = (-x_1, x_2, x_3, x_4, x_5, x_6)\quad \text{on } X \end{align*}
    and \begin{align*}\xi \cdot (x_1, x_2, x_3, x_4, x_5, x_6) = (-x_1, -x_2, -x_3, x_4, x_5, x_6)\quad \text{on } Y,\end{align*}
     where $S^5$ regarded as the unit sphere in $\RR^6$, and $\xi$ denotes the generator of $\C_2$. 
     
    These two $\C_2$-spaces have different equivariant rational homotopy types. Indeed,
    $X^{\C_2}\cong S^4,$ $ Y^{\C_2}\cong S^2,$
    so their fixed point sets are not even rationally homotopy equivalent.  Nevertheless, both spaces have the same minimal model $(\QQ(a),0)$, where $|a|=5$, and the induced $\C_2$-action is given by $\xi\cdot a=-a.$

     This limitation motivates the theory developed by Triantafillou and Scull \cite{TR1982, LS2002}. Their notion of an equivariant minimal model, recalled in \cref{def:minimal}, completely characterizes equivariant rational homotopy types. Rather than considering a single $\DGA$ equipped with a group action, they consider a diagram of $\DGA$s that encodes the rational homotopy types of all fixed point spaces together with the morphisms induced by the group action.
     Unless stated otherwise, the term equivariant minimal model in this article will always refer to Triantafillou-Scull's notion.
\end{remark}

\begin{proposition}
 \label{prop: lower fixed DGA minimal}
      Let $\sA$ be a $\sO_G$-$\DGA$ 
      such that $\sA(G/H) = \QQ$ for all subgroups $H > e$. Then the minimal model of $\sA$ can be constructed from the non-equivariant minimal model of $\sA(G/e)$.
\end{proposition}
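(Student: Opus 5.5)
Let $A=\sA(G/e)$; the group $W=N(e)/e=G$ acts on $A$ by $\DGA$ automorphisms through the structure maps of $\sA$ indexed by automorphisms of $G/e$. By \cref{thm: minimal model for each level}, the non-equivariant minimal model $\rho:M\to A$ carries a $G$-action with $\rho$ equivariant. The plan is to define the diagram $\sM$ by
\[
\sM(G/e)=M,\qquad \sM(G/H)=\QQ\ \text{ for } H>e,
\]
with the $G$-action on $M$ giving the structure maps on automorphisms of $G/e$. The projections $G/e\to G/H$ are sent to the augmentation $M\to\QQ$, which is $G$-invariant because $M^0=\QQ$. All remaining maps are identities of $\QQ$. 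The map $\rho$, together with the identities at the higher levels, then gives a morphism $\sM\to\sA$. This is natural since $\sA(G/H)=\QQ$ forces the structure maps of $\sA$ out of $G/e$ to be augmentations. It is a levelwise quasi-isomorphism: at $G/e$ this is the non-equivariant statement, and at the other levels it is the identity.

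Next I would show that $\sM$ is a minimal system in the sense of \cref{def:minimal}. Write $M=\bigcup M_n$ with $M_n=M_{n-1}\otimes\QQ(V_n)$ as in the proof of \cref{thm: minimal model for each level}, so that each $V_n=\H^n(A,M_{n-1})$ is a $\QQ(G)$-module. The candidate dual coefficient system is $\uV_n$, with value $V_n$ at $G/e$ and $0$ elsewhere. By \cref{defn: standard inj env in vec} and \cref{sum inj}, $\uV_n=\u{(V_n)}_e$ is injective, since $\hom_{\QQ G}(\QQ(G/e)^K,V_n)$ equals $V_n$ for $K=e$ and $0$ otherwise. So its injective resolution is $\uV_n\to\uV_n\to 0$, and the elementary extension of \cref{construction of elementary extn} reduces to adjoining $\uV_n$ in degree $n$ with $d=\alpha_0=\alpha$. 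Here $\alpha:\uV_n\to\u Z^{n+1}(\sM_{n-1})$ is $v\mapsto m_v$ at $G/e$ and $0$ elsewhere. It is natural because the section $s$ was chosen $G$-equivariantly, and because $m_v$ has positive degree and so is killed by the augmentation. Then $\sM_{n-1}(\uV_n)$ is exactly $M_n$ at $G/e$ and $\QQ$ above, so by induction $\sM=\bigcup\sM_n$ is minimal with $\sM_0=\sM_1=\u\QQ$.

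It remains to check that $\sM$ is injective, i.e.\ that it is a system of $\DGA$s. As a tensor product of the injective systems $\u\QQ$ and $\QQ(\uV_n)$, this holds by the remark in the construction (citing \cite{Golasinski1997}). One can also check it directly: each degree-positive piece is concentrated at $G/e$ as an induced module $\hom_{\QQ G}(\QQ G,-)$. Uniqueness of minimal models (\cref{thm: uniqueness of minimal alg}) then identifies $\sM$ with the minimal model of $\sA$.

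I expect the main obstacle to be the naturality and injectivity bookkeeping at $G/e$. One must check that the $\QQ(G)$-module $V_n$ embedded at level $e$ is genuinely injective in $\vc^\ast$. One must also check that products of generators stay concentrated at $G/e$ and are compatible with the zero maps to $\QQ$ in positive degrees. The case where $\sA$ itself is not injective (for instance $A^0$ larger than $\QQ$) should be harmless, since only a quasi-isomorphism $\sM\to\sA$ is needed.
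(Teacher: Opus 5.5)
Your proposal is correct and takes essentially the same route as the paper. You equip the non-equivariant minimal model $M$ of $\sA(G/e)$ with the $G$-action from \cref{thm: minimal model for each level}, place $\QQ$ with augmentation and identity structure maps at the other orbits, and observe that each $\uV_n$ is concentrated at $G/e$, hence equal to $\u{(V_n)}_e$ and injective, so every stage is an elementary extension. Your extra checks fill in details the paper leaves implicit: naturality of $\alpha$ (via the augmentation killing positive-degree elements), and injectivity of the positive-degree parts of $\sM$.
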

\begin{proof}
  By \cref{thm: minimal model for each level}, the non-equivariant minimal model of $\sA(G/e)$ inherits a $\QQ(G/e)$-action with
    $\rho: M \to \sA(G/e)$
     an equivariant $\DGA$ map. Define $\sM(G/e) = M$ and $\sM(G/H) = (\QQ,0)$ for all $H > e$, with the structure maps defined by augmentations and identity maps. Construct $\sM$  by induction as  $\s M_n=\sM_{n-1}\otimes \QQ(\uV_n)$, where $\sM_0=\u \QQ$.
     
     Here, $\u V_n$ is given by $$\uV_n(G/H)=\begin{cases}
         W_e &\text{ for } H= e
         \\ 0 &\text{ otherwise}. 
     \end{cases}$$
     for $n\geq 2,$ 
     where $ W_e$ is the $\QQ(G)$-module generated by $n$-th degree generators of $\s M$.
     Since  $\u V_n(G/H)=0$  for $H\neq e$, one can identify $\u V_n$ with $\u W_e(\_)=\hom_{\QQ(G)}(\QQ(G)^{(-)},W_e)$ 
     (\cref{defn: standard inj env in vec}). It follows from \cref{sum inj} that $\u V_n$ is injective in $\vc^\ast$. Consequently, the extensions $\s M_n=\sM_{n-1}\otimes \QQ(\uV_n)$ are  elementary extensions. Hence $\s M$ is a minimal system. We extend $\rho$ by setting $\rho(G/H)=\id$ for $H\neq e.$ 
    Therefore, the map $\rho: \sM \to \sA$ provides the minimal model of $\sA$.
\end{proof}

\begin{example}
Consider the system of graded algebras $\s A$ over $\C_{4}$, where $$\s A(\C_{4}/H) = \begin{cases}  \QQ(x_1, x_2, x_3, x_4) / (x_1^2, x_2^2, x_3^2, x_4^2) & H=e  \\   \QQ & H \in \{ \C_2, \C_4 \}.\end{cases} $$ The degrees of the generators are $\vert{}x_i\vert{} = 2$ for $1 \leq i \leq 4$. The action of the generator $\xi \in \C_4$ is given by $\xi \cdot x_i = x_{i+1}$ for $1 \leq i \leq 3$, and $\xi \cdot x_4 = x_1$. 

The minimal model $\s M$ of $\s A$ takes the form $\s M(\C_{4}/e) = \QQ(x_1, x_2, x_3, x_4, y_1, y_2, y_3, y_4),$ equipped with the differential $d(x_i) = 0$ and $d(y_i) = x_i^2$ for $1 \leq i \leq 4$. The generator degrees are $\vert{}x_i\vert{} = 2$ and $\vert{}y_i\vert{} = 3$. The $\C_4$-action extends equivariantly via $\xi \cdot x_i = x_{i+1}$ and $\xi \cdot y_i = y_{i+1}$ for $1 \leq i \leq 3$, alongside $\xi \cdot x_4 = x_1$ and $\xi \cdot y_4 = y_1$. Finally, $\s M(\C_{4}/H) = \QQ$ for $H \in \{ \C_2, \C_4 \}$.\end{example}
A natural question is how the equivariant minimal model is related to the minimal model at each level $\s A(G/H)$. We answer this question for $G=C_{p^k}$. 
\begin{definition}\label{def:splitDGA}
    We call an $\sO_{C_{p^k}}$-$\DGA$ $\sA$ a \emph{split-$\sO_{C_{p^k}}$-$\DGA$} if for all $i \geq 1$,
    $$ \sA(C_{p^k}/p^i) \cong \sA(C_{p^k}/p^{i-1})\oplus \ker \sA(\hat e_{p^i,p^{i-1}}) $$ as DGA.
\end{definition}

\begin{theorem}\label{thm: levelwise minimal}
    Let $\sA$ be a split-$\sO_{C_{p^k}}$-$\DGA$. Then the minimal model of $\sA$ is levelwise minimal.
\end{theorem}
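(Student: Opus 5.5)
Throughout, the claim is that $\sM(G/p^i)$, with the induced differential, is a Sullivan minimal algebra and $\rho(G/p^i):\sM(G/p^i)\to\sA(G/p^i)$ is a quasi-isomorphism, so that each level of $\sM$ is the non-equivariant minimal model of the corresponding level of $\sA$. The quasi-isomorphism part is immediate from the definition of quasi-isomorphism of systems. The content is therefore that the differential at each level is decomposable, i.e. that no linear part survives at any level.

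By \cref{prop: explicit minimal cons cpk}, each stage is $\sM_n=\sM_{n-1}\otimes\QQ(\uV_{n,0}\oplus\uV_{n,1})$ with $d|_{\uV_{n,0}}=\alpha_0+w_0$. The only possible failure of levelwise minimality is the linear term $w_0:\uV_{n,0}\to\uV_{n,1}$. Here $\alpha_0$ lands in $\sM_{n-1}^{n+1}$, which is decomposable because $\sM_{n-1}$ has generators only in degrees $\le n$ (with $\uV_{n-1,1}$ in degree $n$) and we are in the simply connected setting. The term $d|_{\uV_{n,1}}=-\alpha_1$ lands in $\sM_{n-1}^{n+2}$, but it could contain generators of $\uV_{n-1,1}$ of degree $n$ only in degree $n+2$ as products. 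So the plan is to show $\uV_{n,1}=0$, equivalently that $\uV_n=\uH^n(\sM_{n-1},\sA)$ is already injective in $\vc^\ast$ at every stage. By \cref{thm: injectivity in Z_p^n}, this means showing that the structure maps $\uV_n(\hat e_{p^i,p^{i-1}})$ are surjective.

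The key step is to carry along an induction hypothesis: $\sM_{n-1}$ is itself split, i.e. $\sM_{n-1}(G/p^i)\cong\sM_{n-1}(G/p^{i-1})\oplus\ker$ compatibly with $\rho_{n-1}$. The hypothesis that $\sA$ is split means that the DGA splitting provides a DGA section $\sigma_i:\sA(G/p^{i-1})\to\sA(G/p^i)$ of the structure map. Given compatible sections on $\sM_{n-1}$ and $\sA$, the relative cochain complexes $R^\ast=\sM_{n-1}^{\ast+1}\oplus\sA^\ast$ at adjacent levels also split, because the relative differential commutes with the sections. Hence the relative cohomology map $\uV_n(G/p^i)\to\uV_n(G/p^{i-1})$ is a split surjection, so $\uV_n$ is injective and $\uV_{n,0}=\uV_n$, $\uV_{n,1}=0$. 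To propagate the hypothesis, I would choose the representative cocycles $(m_v,a_v)$ defining $d$ and $\rho$ on $\uV_n$ compatibly with the sections, by first choosing them at the lower level and transporting them via $\sigma$. This makes $\sM_n$ split with a DGA section extending the previous one, mirroring the construction in \cref{thm: minimal model for each level}.

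The main obstacle I expect is making the sections genuinely compatible. The splitting in \cref{def:splitDGA} is only an abstract isomorphism, and one has to choose DGA sections $\sigma_i$ of $\sA(\hat e_{p^i,p^{i-1}})$ and then lift them to the model in a way that respects $\rho$. My first attempt would be to define $\sigma$ on the free generators of $\sM_n$ at the lower level by sending each generator to its chosen lift. I would then check $d\sigma=\sigma d$ and $\rho\sigma=\sigma\rho$ using the compatibility of the chosen cocycles. If that fails, a fallback is to argue only at the level of cohomology: a DGA section of $\sA$ gives surjectivity of $\uH^\ast(\sA)$, and surjectivity of $\uH^\ast(\sM_{n-1})$ is inductive. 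A five-lemma-type argument on the long exact sequence of the pair would then give surjectivity on relative cohomology, which is all that the injectivity criterion of \cref{thm: injectivity in Z_p^n} requires.
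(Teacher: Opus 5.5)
Your strategy is sound and differs from the paper's. The paper first builds a levelwise minimal system orbit by orbit: it takes $\u{M}(G/p^i)$ to be the minimal model of the wedge-type algebra $\u{M}(G/p^{i-1})\oplus_\QQ\u{N}(G/p^i)$, with structure map the retraction onto $\QQ(U)$. It then checks by induction on degree that this system is its own equivariant minimal model. You instead run the equivariant construction degree by degree and carry a $\rho$-compatible DGA section. Then $s^R=s^{\sM}\oplus s^{\sA}$ is a chain section of $e^R$, so $\uV_n(\hat e_{p^i,p^{i-1}})$ is split surjective. That deduction is correct, and your route needs only ($G$-equivariant) DGA sections of the structure maps of $\sA$, not the full decomposition the paper uses to form the wedge model.

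The gap is the step that keeps your induction going, which you leave open; ``choose at the lower level and transport via $\sigma$'' does not suffice. For $d$ and $\rho$ on $\uV_n$ to be morphisms of dual coefficient systems, the cocycle choice $c\colon\uV_n\to\u{Z}^n(R)$ must satisfy $e^R\circ c_i=c_{i-1}\circ e^R_\ast$ and must be $\QQ(G/p^i)$-equivariant. Transport only determines $c_i$ on $s^R_\ast\uV_n(G/p^{i-1})$, via $c_i\, s^R_\ast=s^R c_{i-1}$. On the complement $\ker e^R_\ast$ you must pick representatives inside $\ker e^R$ (if $e^R z=dw$, replace $z$ by $z-d\,s^R(w)$) and then average over $G/p^i$. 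The transported part is equivariant only if $s^{\sA}$, hence $s^{\sM}$, is $G$-equivariant, so the splitting must be read equivariantly; the paper's own proof implicitly requires the same. With these choices, putting $s^R_\ast$ on the new generators gives $d\, s^R_\ast=s^{\sM} d$, $\rho\, s^R_\ast=s^{\sA}\rho$ and $e\, s^R_\ast=\id$. That is exactly your hypothesis at stage $n$.

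Your fallback, however, is false. Surjectivity of the structure maps on $\uH^\ast(\sA)$ and $\uH^\ast(\sM_{n-1})$ does not give surjectivity on $\uV_n$. In the exact sequence $\uH^n(\sA)\to\uV_n\to\uH^{n+1}(\sM_{n-1})\xrightarrow{\rho^\ast}\uH^{n+1}(\sA)$, the surjectivity half of the four lemma needs the map on $\uH^{n+1}(\sA)$ to be injective, which fails in general. What is really needed is that $\ker\rho^\ast$ in degree $n+1$ maps onto its counterpart at the lower level, and that requires sections commuting with $\rho^\ast$.

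The paper's \cref{ex: minimal system but not levelwise minimal} satisfies every premise of your fallback but not its conclusion. There $\QQ(x)/(x^3)\to\QQ(x)/(x^2)$ is surjective and $\uH^\ast(\sM_2)$ is the constant system $\QQ(x_1)$. Yet $\uV_3\cong\ker\bigl(\rho^\ast\colon\uH^4(\sM_2)\to\uH^4(\sA)\bigr)$ is $0$ at $\C_p/e$ and $\QQ x_1^2$ at $\C_p/\C_p$, so its structure map is not surjective. The split hypothesis therefore has to enter through $\rho$-compatible sections, which is precisely the step you need to carry out.
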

\begin{proof}
    We prove this statement using induction. In this proof, we exploit the linear poset structure of the orbit category $\s O_G$ for the group $G={C_{p^k}}.$  Assume that the minimal model of $\sA(G/p^{i-1})$ has been constructed and is given by a $\QQ(G/p^{i-1})$-$\DGA$ map $\rho_{i-1}$. Using \cref{thm: minimal model for each level}, we construct a $\QQ(G/p^{i})$-minimal model, say $\u N(G/p^i)$, given by the map $\rho'_{i} : \u N(G/p^{i}) \to \ker \sA(\hat e_{p^i,p^{i-1}}) \oplus \QQ$.

    Next, we construct the $\QQ(G/p^i)$-minimal model $\u M(G/p^{i}) \xrightarrow{\sigma_i} \u M(G/p^{i-1}) \oplus_\QQ \u N(G/p^{i})$ by \cref{thm: minimal model for each level}.
    We define $\u M(\hat e_{p^{i},p^{i-1}}) := p \circ \sigma_i$, where $p$ denotes the projection onto the first component. Therefore, we have the following commutative diagram:
    $$\begin{tikzcd}
        {\u M(G/p^i)} & {\u M(G/p^{i-1}) \oplus_\QQ \u N(G/p^{i})} & {\sA(G/p^i) = \sA(G/p^{i-1}) \oplus \ker \sA(\hat e_{p^{i},p^{i-1}})} \\
        {\u M(G/p^{i-1})} & {\u M(G/p^{i-1})} & {\sA(G/p^{i-1})}
        \arrow["{\sigma_i}", from=1-1, to=1-2]
        \arrow["{\u M(\hat e_{p^{i},p^{i-1}})}", from=1-1, to=2-1]
        \arrow["{\rho_{i-1} \oplus_\QQ \rho'_{i}}", from=1-2, to=1-3]
        \arrow["{p}"', from=1-2, to=2-2]
        \arrow["{p}", from=1-3, to=2-3]
        \arrow["\id"', from=2-1, to=2-2]
        \arrow["{\rho_{i-1}}"', from=2-2, to=2-3]
    \end{tikzcd}$$
    We note that the composition $(\rho_{i-1} \oplus_\QQ \rho'_{i}) \circ \sigma_i =: \rho_i$ is a quasi-isomorphism since $\rho_{i-1}$, $\rho'_i$, and $\sigma_i$ are quasi-isomorphisms.
    These fit together to give a quasi-isomorphism of $\sO_G$-$\DGA$s $\rho: \u M \to \sA$, where $\rho(G/p^i) = \rho_i$.

    Now we will show that $\u M$ is the equivariant minimal model of $\sA$. It is sufficient to show that the equivariant minimal model $\sM$ of $\u M$ is $\u M$ itself. We inductively assume $\sM_{n-1} = \u M_{n-1}$. Let $\uV_n := \uH^n(\u M, \sM_{n-1})$; then $\uV_n = \uH^n(\u M, \u M_{n-1})$. Clearly, $\uV_n$ consists of the $n$-th degree generators of $\u M$.

    We note that $\sigma_i$ is surjective. To justify this, suppose the minimal models are given by $\u M(G/p^{i-1}) = (\QQ(U), d)$ and $\u N(G/p^i) = (\QQ(U'), d)$. Then the minimal model of $\u M(G/p^{i-1}) \oplus_\QQ \u N(G/p^{i})$ takes the form $\u M(G/p^i) = \QQ(U, U', U'')$ where $d(U'') \subseteq \QQ^+(U) \wedge \QQ^+(U')$, and the map $\sigma_i$ is the identity on $U$ and $U'$, and zero on $U''$. 
    Thus, the composition $p \circ \sigma_i = \u M(\hat e_{p^{i},p^{i-1}})$ is induced by the projection map $U \oplus U' \oplus U'' \to U$. 
    Therefore, restricting to the subspaces consisting of $n$-th degree elements, the map $\uV_n(\hat e_{p^i, p^{i-1}})$ is surjective. Hence, $\uV_n$ is injective by \cref{thm: injectivity in Z_p^n}. This implies $\sM_n = \sM_{n-1} \otimes \QQ(\uV_n) = \u M_n$.
\end{proof}

We give an example that explicitly illustrates how an $\s O_G$-splitting produces a levelwise minimal system.
\begin{example}
    Consder the $\sO_{C_p}$-$\DGA$ $$\sA= \begin{cases}

    \QQ(x, y, z)/(  xz, yz) , &\text{at}\ \C_{p}/e\\  
        \QQ(x, y), &\text{at}\ \C_{p}/\C_{p}
\end{cases}$$
The action on each level is trivial and $dy=x^2, dz=0$, and $|x|=2, |z|=3.$ The structure map is $\sA(\hat e_{e,\C_p}):x \mapsto x, y\mapsto y$.
Direct computations show the $4$-th step of the equivariant minimal model is 
$$\sM_4= \begin{cases}
    \QQ(x, y, z,w) , &\text{at}\ \C_{p}/e\\  
        \QQ(x, y), &\text{at}\ \C_{p}/\C_{p}.
\end{cases}$$
where the description of the generators is the same as in $\s A $, and $dw=xz$.
\end{example}

The following is an example of a system of cohomology system $\s A$  that does not split, and its  equivariant minimal model is not levelwise minimal. %This suggests that the splitting hypothesis of \cref{thm: levelwise minimal} can not be weakened. 
\begin{example}\label{ex: minimal system but not levelwise minimal}
    Define the $\sO_{\C_p}$-$\DGA$ $$\sA= \begin{cases}
    \QQ(x)/( x^3) , &\text{at}\ \C_p/e\\  
         \QQ(x)/( x^2), &\text{at}\ \C_p/\C_p
\end{cases}$$
where all the differentials are zero, and $|x|=2$. The structure map is $\sA(\hat e_{e,\C_p}):x \mapsto x$. Thus, $\sA$ is injective.

We construct the minimal model iteratively by setting $\sM_n=\sM_{n-1}(\uV_n)$, where $\uV_n=\uH^{n}(\s A,\sM_{n-1})$, equipped  with a suitable $d|_{\uV_n}$, and an $n$-isomorphism $\rho_n:\sM_n \ra \sA.$
At stage $n=2$, we have $$\uV_2=\begin{cases}
    ( x) , &\text{at}\ \C_p/e\\  
         ( x), &\text{at}\ \C_p/\C_p
\end{cases}$$
We have $$\sM_2=\begin{cases}
    \QQ(x_1) , &\text{at}\ \C_p/e\\  
          \QQ(x_1), &\text{at}\ \C_p/\C_p
\end{cases}$$
with $dx_1=0, |x_1|=2$, and $\rho (x_1)=x.$
Next, we have $$\uV_3=\begin{cases}
    0 , &\text{at}\ \C_p/e\\  
         ( x_1^2), &\text{at}\ \C_p/\C_p
\end{cases}$$
As $\uV_3$ is not injective, we take its injective resolution namely $\uV_3 \xhookrightarrow{} \uV_{3,0} \ra \uV_{3,1}\ra 0.$ Renaming the generator $x_1^2$ by $x_2$, we have the following diagram:
$$\begin{tikzcd}
	0 & {( x_2 )} & {( x'_2 )} \\
	{( x_2 )} & {( x_2 )} & 0
	\arrow[hook, from=1-1, to=1-2]
	\arrow[from=1-1, to=2-1]
	\arrow["x_2\mapsto x'_2", from=1-2, to=1-3]
	\arrow[from=1-2, to=2-2]
	\arrow[from=1-3, to=2-3]
	\arrow["\id", hook, from=2-1, to=2-2]
	\arrow[from=2-2, to=2-3]
\end{tikzcd}$$
Then, $$\sM_3=\begin{cases}
    \QQ(x_1,x_2,x_2') , &\text{at}\ \C_p/e\\  
         \ \QQ(x_1,x_2), &\text{at}\ \C_p/\C_p
\end{cases}$$
with differentials $dx_2=x'_2+x_1^2, dx'_2=0$ at $\C_p/e$, and $dx_2=x_1^2$ at $\C_p/\C_p$, where the degrees $|x_2|=3$ and $|x'_2|=4$. Also, $\rho(x_2)=0$ and $\rho(x'_2)=-x^2$.
Since the corresponding $4$-th relative cohomology vanishes, we get $\sM_4=\sM_3$.

Finally, $$\uV_5=\begin{cases}
   ( x_1^3) , &\text{at}\ \C_p/e\\  
         0, &\text{at}\ \C_p/\C_p
\end{cases}$$
Thus, 
$$\sM_5=\begin{cases}
    \QQ(x_1,x_2,x_2',x_3) , &\text{at}\ \C_p/e\\  
         \ \QQ(x_1,x_2), &\text{at}\ \C_p/\C_p
\end{cases}$$
with $dx_3=x_1^3$, and $\rho(\C_p/e)(x_3)=0.$ This completes the construction of the minimal model as $\s M_5$ is quasi-isomorphic to $\s A$. It is clear that $\sM(\C_p/e)$ is not a minimal algebra.
\end{example}

The converse of \cref{thm: levelwise minimal} is not true, as seen in the example given below.
\begin{example}
    Given a cohomology diagram $\sA$ with minimal model levelwise minimal, it is not necessarily true that $\sA$ is a split-$\sO_G$-$\DGA$. 
    Consider the $\sO_{C_p}$-diagram of graded algebra given by $$\sA=\begin{cases}
     \QQ(x_3,y_3),&\text{at}\ \C_{p}/e\\
        \QQ(z_3), &\text{at}\ \C_{p}/\C_{p}
    \end{cases}$$
    with the structure map $x_3\mapsto z_3$ and $y_3\mapsto 0$. Note that this diagram of graded algebras does not split (see \Cref{def:splitDGA}). It can be verified that $\sA$ is a minimal system,  which is also levelwise minimal. Therefore, splitting is a sufficient but not a necessary condition. 
\end{example}

The following proposition characterizes levelwise minimality of a minimal system in terms of the projectivity of the diagram of the corresponding homotopy groups in $\vc$.
\begin{proposition}\label{prop: proj homotopy implies levelwise}
  Let $G$ be a finite group. The diagram of homotopy groups of a $G$-space $X$ is projective in $\vc$ if and only if the minimal model of $X$ is levelwise minimal.
\end{proposition}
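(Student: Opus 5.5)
The plan is to compare, degree by degree, the generators of the minimal system $\sM_X$ with the rational homotopy groups of the fixed point sets. For a $G$-simply connected space of finite type, the indecomposables of $\sM_X(G/H)$ in degree $n$ are dual to $\pi_n(X^H)\otimes\QQ$; more precisely, the diagram $\u\pi_n(X)\otimes\QQ$ in $\vc$ is dual to the diagram $\uV_n$ in $\vc^\ast$ of $n$-th degree indecomposables of the minimal system, in the sense of Triantafillou \cite{TR1982}. Dualizing $\hom(-,\QQ)$ interchanges projective objects of $\vc$ with injective objects of $\vc^\ast$ (for finite type), so projectivity of $\u\pi_n(X)\otimes\QQ$ is equivalent to injectivity of the dual diagram $\u\pi_n^\ast$.

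For the forward direction, assume $\u\pi_\ast$ is projective. Then the diagram $\uV_n = \uH^n(\sM_X,\sM_{n-1})$ appearing at each stage of Scull's inductive construction is already injective. Hence its injective resolution is $\uV_n\xrightarrow{\id}\uV_n\to 0$, and the elementary extension $\sM_n=\sM_{n-1}\otimes\QQ(\uV_n)$ adds no correction terms $\uV_{n,1}$. As a consequence $d(\uV_n)\subseteq \sM_{n-1}^{n+1}$, and decomposability at each level $G/H$ is inherited from the fact that $\uV_n(G/H)$ computes the relative cohomology $\H^n(\sM_X(G/H),\sM_{n-1}(G/H))$ levelwise. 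So each $\sM_X(G/H)$ is built by Hirsch extensions with decomposable differentials, i.e.\ it is a Sullivan minimal algebra, and it is therefore the non-equivariant minimal model of $X^H$.

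Conversely, suppose $\sM_X$ is levelwise minimal. Then each $\sM_X(G/H)$ is the minimal model of $X^H$, so its space of indecomposables $Q(\sM_X(G/H))^n$ is dual to $\pi_n(X^H)\otimes\QQ$ naturally in $G/H$. On the other hand, $\sM_X$ is freely generated as a system by the diagrams $\uV_{n,j}$ coming from the elementary extensions, and these are injective. Levelwise minimality forces the linear part $w_0:\uV_{n,0}\to \uV_{n,1}$ of the differential to vanish, since $w_0$ is indecomposable-to-indecomposable. But $w_0$ is surjective onto $\uV_{n,1}$ by construction, so all $\uV_{n,1}=0$ (equivalently, all $\uV_{i,j}$ with $j\ge1$ vanish). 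Thus the indecomposables in degree $n$ form the diagram $\uV_{n,0}$, which is injective. Dualizing, $\u\pi_n(X)\otimes\QQ$ is projective.

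The main obstacle is the identification of the diagram of indecomposables of $\sM_X$ with the dual of the homotopy diagram functorially over $\sO_G$. This is clean only when each level is itself minimal, which is exactly the converse direction. For the forward direction I would instead argue that the generators $\uV_n$ of Scull's construction coincide with $\hom(\u\pi_n\otimes\QQ,\QQ)$, using the $n$-isomorphism $\sM_n\to\u{\s E}(X)$ and the levelwise Hurewicz-type identification of $\H^n(\u{\s E}(X),\sM_{n-1})$ with the dual of $\pi_n$. I expect this to need care with finite type and with the dualization exchanging projective and injective objects; the latter can be checked via \cref{sum inj} and its dual.
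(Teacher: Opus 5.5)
Your proposal is correct in substance and follows the same three-step skeleton as the paper. First, $\sM_X$ is levelwise minimal exactly when every $\uV_n$ is injective, so that no correction generators $\uV_{n,j}$ with $j\ge 1$ appear. Second, dualization exchanges injectives in $\vc^\ast$ with projectives in $\vc$. Third, $\uV_n\cong\u\pi_n(X)^\ast$.

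\textbf{The key identification.} The step you single out as the main obstacle is not proved in the paper; it is imported wholesale from \cite[Theorem 4.16]{LS2002}, via the correspondence between minimal systems and equivariant Postnikov towers. Your substitute applies Sullivan's theorem level by level.
\begin{itemize}
\item In the converse direction this is immediate, because levelwise minimality makes each $\sM_X(G/H)$ the minimal model of $X^H$.
\item In the forward direction it works by induction on $n$. If $\sM_{n-1}$ is levelwise minimal, then $\uV_n(G/H)$ is the relative cohomology of the $(n-1)$-st stage of the minimal model of $X^H$, which is dual to $\pi_n(X^H)\otimes\QQ$.
\end{itemize}
This route is more self-contained than the paper's. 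Its cost is that you must check these identifications are natural over $\sO_G$, which the citation supplies for free.

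\textbf{Decomposability.} In the forward direction, decomposability of $d(\uV_n)$ is simply a degree argument, not something inherited from relative cohomology. Inductively $\sM_{n-1}$ is generated in degrees $2,\dots,n-1$, so $\sM_{n-1}^{n+1}$ consists of decomposables.

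\textbf{A step to correct.} For a general finite group $G$, the map $w_0\colon \uV_{n,0}\to\uV_{n,1}$ is not surjective ``by construction''. Its image is $\uV_{n,0}/\uV_n$, which embeds in its injective envelope $\uV_{n,1}$. Surjectivity holds only when this cokernel is already injective, as happens for $G=\C_{p^k}$ by \cref{cpk resolution length 1}.

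The conclusion you need survives this. Levelwise minimality forces $w_0=0$, since $w_0$ is the component of the linear part of $d$ landing in a summand of generators disjoint from those of $\sM_{n-1}$. Hence $\uV_{n,0}/\uV_n=0$, so $\uV_n=\uV_{n,0}$ is injective. Then $\uV_{n,1}$, being the injective envelope of zero, vanishes, and so do all the later $\uV_{n,j}$.
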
 
\begin{proof}
    Let $\sM$ be the minimal model of $X$. 
    Then $\sM=\cup \sM_n$, where $\sM_n=\sM_{n-1}\otimes \mathbb{Q}(\underline{V_n})$, is levelwise minimal if and only if $\uV_n$ is injective in $\vc^\ast$. Indeed, for each $n$, if $\uV_n$ is injective, then during the $n$-th stage of the elementary extension we are adding only generators of degree $n$ at each $G/H$-level, and hence each $G/H$-level of $\sM_n$ is non-equivariantly minimal. On the other hand, if some $\uV_n$ is not injective, then one must take an injective resolution that gives new generators of degree $\geq n+1$ at some $G/H$-level, and then in $\sM_n(G/H)$ the differential is indecomposable, and hence it's not minimal.
    
   We note that $\u A\in \vc^\ast$ is injective if and only if the dual rational coefficient system $\u A^\ast$ defined by $A^\ast (G/H)= \hom (A(G/H),\QQ)$ is projective in $\vc$. Indeed, this can be observed by the equivalence of the following liftings:
$$\begin{tikzcd}
	0 &&& 0 \\
	\u M & \u A & \Leftrightarrow & {\u M^\ast} & {\u A^\ast} \\
	\u N &&& {\u N^\ast}
	\arrow[from=1-1, to=2-1]
	\arrow[from=2-1, to=2-2]
	\arrow[from=2-1, to=3-1]
	\arrow[from=2-4, to=1-4]
	\arrow[from=2-5, to=2-4]
	\arrow[dashed, from=2-5, to=3-4]
	\arrow[dashed, from=3-1, to=2-2]
	\arrow[from=3-4, to=2-4]
\end{tikzcd}$$
    We recall the equivariant Postnikov decomposition of the space $X$, which is contructed by principle fibrations $X^H_n \ra X^H_{n-1}$ whose fiber is $K( \pi_n(X^H),n)$. The duality between a space and its corresponding minimal model gives $\underline{V}_n=\u \pi_n(X)^\ast$ by \cite[Theorem 4.16]{LS2002}.
    This concludes the proof.
\end{proof}
\begin{example}
Recall that a split-$\s O_{\C_{p^k}}$-DGA is injective in $\vc^\ast$. Moreover, the equivariant minimal model of such a system is levelwise minimal (\cref{thm: levelwise minimal}).
    Therefore, by \cref{prop: proj homotopy implies levelwise}, a $G$-space $X$ for which  $\u\sE(X)$ is weakly equivalent to a
    split-$\s O_{\C_{p^k}}$-$\DGA$ has a projective diagram of homotopy groups in $\vc$. 
\end{example}

\section{Equivariant formality of \texorpdfstring{$G$}{G}-spaces}\label{sec: equivariant formality}

For any $G$-space, the minimal model for its corresponding diagram of PL de Rham algebras will give its equivariant rational homotopy type. This is easier to compute when the space is equivariantly formal. In this section, we give examples of equivariantly formal spaces and explore which space-level constructions preserve formality, contrasting our results with the non-equivariant case.  

Recall that a cohomologically $1$-connected $\DGA$ is said to be \emph{formal} if its minimal model is isomorphic to the minimal model of its cohomology algebra. Equivalently, a $\DGA$ $A$, is formal if $A$ and $\H^\ast (A)$ are weakly equivalent. 
A simply connected space is said to be \emph{formal} if the corresponding PL de  Rham algebra is formal. The rational homotopy groups of formal spaces can be computed from their cohomology algebra, and their rational Postnikov tower can also be computed (\cite[Theorem 3.3]{DGMS1975}).

The following proposition relates a weak equivalence between the PL de Rham system of a $G$-space and its corresponding cohomology system to a quasi-isomorphism from the minimal model of the $G$-space to an injective replacement of the corresponding cohomology system. The underlying idea of the proof is implicitly used in \cite{fine1993equivariant}. However,  the proposition is of independent interest and leads to several important observations.
\begin{proposition}\label{prop: formal defn}
    Let $X$ be a $ G$-space and $\sM_X$ be the minimal model of $X$. Then the following are equivalent:
    \begin{enumerate}
        \item There exists a morphism of systems of $\DGA$s, 
$\sM_X\longrightarrow \sI(\uH^\ast(X))
$
which is a quasi-isomorphism.
\item There exists a weak equivalence between the cohomology system $\uH^\ast (X)$ and the PL de Rham system $\u{\s E}_X.$ 
\end{enumerate}
\end{proposition}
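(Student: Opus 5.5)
We prove the two implications separately. Throughout, $\rho_X:\sM_X\to \u{\s E}_X$ is the quasi-isomorphism exhibiting $\sM_X$ as the minimal model of $X$ (\cref{thm: existence of minimal model}). Let $i:\uH^\ast(X)\to \sI(\uH^\ast(X))$ be the inclusion of \cref{thm:fine inj extension}; it is a quasi-isomorphism, where $\uH^\ast(X)$ is regarded as a diagram of $\DGA$s with zero differential.

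For (1) $\Rightarrow$ (2), suppose $\phi:\sM_X\to \sI(\uH^\ast(X))$ is a quasi-isomorphism. Then
$$\u{\s E}_X \xleftarrow{\ \rho_X\ } \sM_X \xrightarrow{\ \phi\ } \sI(\uH^\ast(X)) \xleftarrow{\ i\ } \uH^\ast(X)$$
is a zig-zag of quasi-isomorphisms. So $\uH^\ast(X)$ and $\u{\s E}_X$ are weakly equivalent in the sense of \cref{def:weakyeqivalent}. This direction is immediate.

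For (2) $\Rightarrow$ (1), suppose we are given a zig-zag of quasi-isomorphisms of diagrams of $\DGA$s
$$\uH^\ast(X)=\sA_0 \leftrightarrow \sA_1 \leftrightarrow \cdots \leftrightarrow \sA_m=\u{\s E}_X.$$
The intermediate diagrams need not be injective, so minimal models cannot be used directly. We first apply the injective envelope to every term. By \cref{thm:fine inj extension} each $\sA_j$ includes into $\sI(\sA_j)$ by a quasi-isomorphism. By \cref{prop: fine factor through inj}, applied one subgroup at a time, each arrow $g:\sA_j\to \sA_{j\pm1}$ extends to a map $\sI(g)$ between the envelopes that is compatible with the inclusions. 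By two-out-of-three for levelwise cohomology isomorphisms, $\sI(g)$ is again a quasi-isomorphism. This produces a zig-zag of quasi-isomorphisms between the systems $\sI(\uH^\ast(X))$ and $\sI(\u{\s E}_X)$. Since $\u{\s E}_X$ is already injective, we may replace the last term by $\u{\s E}_X$ itself via $i:\u{\s E}_X\to\sI(\u{\s E}_X)$.

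All terms are now injective and cohomologically $1$-connected, since $X$ is $G$-simply connected. Each therefore has a minimal model by \cref{thm: existence of minimal model}. We transport $\rho_X$ along the zig-zag. At a forward arrow $g:\sB\to\sB'$ we compose: $g\circ\rho$ is a quasi-isomorphism from $\sM_X$. At a backward arrow $g:\sB'\to \sB$ (a quasi-isomorphism) we lift $\rho:\sM_X\to\sB$ through $g$ using \cref{prop: lifting property}. The lift $\tilde\rho:\sM_X\to \sB'$ satisfies $g\tilde\rho\simeq\rho$, so $\tilde\rho$ is a quasi-isomorphism by two-out-of-three, since homotopic maps induce the same map on cohomology. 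Inductively we obtain a quasi-isomorphism $\sM_X\to\sI(\uH^\ast(X))$, which is (1).

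The main obstacle is the functoriality step. Injective envelopes are defined by successive enlargements, and \cref{prop: fine factor through inj} only provides an extension into an injective target. So $\sI(g):\sI(\sA)\to\sI(\sB)$ is obtained by extending $\sA\to\sB\to\sI(\sB)$ through each enlargement $\sI_H$ in turn, and we must check that it commutes with the inclusions. The quasi-isomorphism property then follows formally from the two inclusions being quasi-isomorphisms. If this bookkeeping proves awkward, the fallback is to avoid $\sI$ on intermediate terms. Minimal models of arbitrary cohomologically $1$-connected diagrams of $\DGA$s are taken as minimal models of their injective envelopes. Any two diagrams connected by a quasi-isomorphism then have isomorphic minimal models, and one composes $\sM_X\cong\sM_{\uH^\ast(X)}\to\sI(\uH^\ast(X))$.
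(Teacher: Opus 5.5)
Your argument is correct and follows the paper's proof. (1)$\Rightarrow$(2) is the same zig-zag. For (2)$\Rightarrow$(1) the paper likewise replaces each term by its injective envelope, extends the arrows via \cref{prop: fine factor through inj}, and applies two-out-of-three. The only difference is in the last step. The paper takes a minimal model of every enveloped term, lifts between them, and invokes \cref{thm: uniqueness of minimal alg} to make the induced maps isomorphisms. You instead transport $\rho_X$ directly along the zig-zag by composing and lifting, which needs only \cref{prop: lifting property} and homotopy invariance of cohomology; your remark that every term has a minimal model is therefore never used.
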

\begin{proof}
    We first prove $(1)\implies (2)$. Assume we have quasi-isomorphisms $\underline{\s E}(X) \xleftarrow{\sim} \sM_X \xra{\sim} \sI(\uH^\ast (X))$. By taking the inclusion $\uH^\ast (X)\ra \sI(\uH^\ast (X))$ as in \Cref{thm:fine inj extension}, we get a zig-zag of quasi-isomorphisms $\underline{\s E}(X) \xleftarrow{\sim} \sM_X \xra{\sim} \sI(\uH^\ast (X)) \xleftarrow{\sim}\uH^\ast (X)$. Thus $\uH^\ast (X)$ is weakly equivalent to $\u{\s E}_X.$

     Next we show $(2)\implies (1)$. Assume there is a weak equivalence between $\uH^\ast (X)$ and  $\u{\s E}_X$. Suppose the weak equivalence is given by the zig-zag of quasi-isomorphisms $$\uH^\ast (X)\xleftarrow{\sim} \s A_1\xra{\sim} \cdots\xleftarrow{\sim} \s A_n\xra{\sim}  \u{\s E}_X.$$
     By \cref{thm:fine inj extension}, we have the vertical inclusions, which are quasi-isomorphisms. Using \cref{prop: fine factor through inj}, we obtain extensions given by the lower dashed  horizontal arrows in the following diagram:
$$\begin{tikzcd}
	{\uH^\ast (X)} & {\s A_1} & \cdots & {\s A_n} & { \u{\s E}_X} \\
	{\s I(\uH^\ast (X))} & {\s I(\s A_1)} & \cdots & {\s I(\s A_n)} & { \u{\s E}_X}
	\arrow["\sim", from=1-1, to=2-1]
	\arrow["\sim"', from=1-2, to=1-1]
	\arrow["\sim", from=1-2, to=1-3]
	\arrow["\sim", from=1-2, to=2-2]
	\arrow["\sim"', from=1-4, to=1-3]
	\arrow["\sim", from=1-4, to=1-5]
	\arrow["\sim", from=1-4, to=2-4]
	\arrow["\id", from=1-5, to=2-5]
	\arrow[dashed, from=2-2, to=2-1]
	\arrow[dashed, from=2-2, to=2-3]
	\arrow[dashed, from=2-4, to=2-3]
	\arrow[dashed, from=2-4, to=2-5]
\end{tikzcd}$$
Notice that we do not require an injective envelope of $\u{\s E}_X$ for the extension, since  $\u{\s E}(X)$ is injective by \cite[Proposition 4.3]{TR1982}.
     By the two-out-of-three property for quasi-isomorphisms, lower horizontal arrows are quasi-isomorphisms. 
     Applying \cref{prop: lifting property}, we obtain the following homotopy-commutative diagram where the vertical arrows represent minimal models (note the injectivity ensures the existence of minimal models):
$$\begin{tikzcd}
	{\s M^{(0)}} & {\s M^{(1)}} & \cdots & {\s M^{(n)}} & {\s M} \\
	{\s I(\uH^\ast (X))} & {\s I(\s A_1)} & \cdots & {\s I(\s A_n)} & { \u{\s E}_X}
	\arrow["\sim", from=1-1, to=2-1]
	\arrow[dashed, from=1-2, to=1-1]
	\arrow[dashed, from=1-2, to=1-3]
	\arrow["\sim", from=1-2, to=2-2]
	\arrow[dashed, from=1-4, to=1-3]
	\arrow[dashed, from=1-4, to=1-5]
	\arrow["\sim", from=1-4, to=2-4]
	\arrow["\sim", from=1-5, to=2-5]
	\arrow["\sim"', from=2-2, to=2-1]
	\arrow["\sim", from=2-2, to=2-3]
	\arrow["\sim"', from=2-4, to=2-3]
	\arrow["\sim", from=2-4, to=2-5]
\end{tikzcd}$$
The dashed arrows are quasi-isomorphisms by the two-out-of-three property.  Therefore, by \cref{thm: uniqueness of minimal alg},  all the morphisms represented by the upper arrows become isomorphisms (possibly after a modification up to homotopy).
Thus, we obtain  the required quasi-isomorphisms $\underline{\s E}(X) \xleftarrow{\sim} \sM \xra{\sim} \sI(\uH^\ast (X))$.
\end{proof}
Similarly, we have the following equivalent statements for a system of $\DGA$s.
\begin{proposition}    
If $\sA$ is a system of $\DGA$s  and $\sM_{\s A}$ be the minimal model of $\s A$. Then the following are equivalent: 
\begin{enumerate}
     \item There exists a morphism of systems of $\DGA$s 
$\sM_{{\sA}}\longrightarrow \sI(\uH^\ast(\sA))
$
which is a quasi-isomorphism.
\item There exists a weak equivalence between the cohomology system $\uH^\ast (\sA)$ and the minimal model $\sM_{\sA}$. 

\end{enumerate}
\end{proposition}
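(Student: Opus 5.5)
The plan is to run the proof of \cref{prop: formal defn} again, with the system $\sA$ in place of the PL de Rham system $\u{\s E}_X$. The only input used there about $\u{\s E}_X$ was that it is injective in $\vc^\ast$, so that it needs no injective envelope. Here this holds because $\sA$ is a system of $\DGA$s. We also use the quasi-isomorphism $\rho:\sM_{\sA}\to\sA$ given by \cref{thm: existence of minimal model}. We read statement (2) as asking for a weak equivalence between $\uH^\ast(\sA)$ and $\sM_{\sA}$, which is the same as one between $\uH^\ast(\sA)$ and $\sA$, since $\rho$ is a quasi-isomorphism.

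For $(1)\implies(2)$, take the given quasi-isomorphism $\sM_{\sA}\xra{\sim}\sI(\uH^\ast(\sA))$. Compose it with the inclusion $\uH^\ast(\sA)\xra{\sim}\sI(\uH^\ast(\sA))$ from \cref{thm:fine inj extension}, read as a backward arrow. This gives the zig-zag $\sM_{\sA}\xra{\sim}\sI(\uH^\ast(\sA))\xleftarrow{\sim}\uH^\ast(\sA)$, which is a weak equivalence as in \cref{def:weakyeqivalent}. Appending $\rho$ also links everything to $\sA$.

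For $(2)\implies(1)$, start from a zig-zag of quasi-isomorphisms $\uH^\ast(\sA)\xleftarrow{\sim}\sA_1\xra{\sim}\cdots\xleftarrow{\sim}\sA_n\xra{\sim}\sM_{\sA}$. Minimal systems are injective, being tensor products of free algebras on injective systems, so $\sM_{\sA}$ needs no envelope either. Then:
\begin{enumerate}
\item Replace each $\sA_i$ and $\uH^\ast(\sA)$ by its injective envelope, using \cref{thm:fine inj extension}.
\item Extend each arrow of the zig-zag to the envelopes by repeatedly applying \cref{prop: fine factor through inj}, once for each conjugacy class of subgroups, as noted right after that proposition.
\item Conclude by two-out-of-three that the extended arrows are quasi-isomorphisms.
\item Take minimal models of every term in the new zig-zag, which exist by injectivity, and use \cref{prop: lifting property} to lift the arrows to maps between these minimal models.
\item Conclude by two-out-of-three that the lifts are quasi-isomorphisms, and by \cref{thm: uniqueness of minimal alg} that each is homotopic to an isomorphism.
\end{enumerate}
The minimal model at the $\sM_{\sA}$ end may be taken to be $\sM_{\sA}$ itself, with the identity map. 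Composing the isomorphisms back to the $\sI(\uH^\ast(\sA))$ end therefore identifies $\sM_{\sA}$ with the minimal model $\s M^{(0)}$ of $\sI(\uH^\ast(\sA))$. Composing with $\s M^{(0)}\xra{\sim}\sI(\uH^\ast(\sA))$ gives the required quasi-isomorphism.

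The main obstacle is step 2. The zig-zag contains arrows in both directions, and each must be extended compatibly. An arrow $\sA_i\to\sA_{i\pm1}$ has to be factored through the envelope of its source, with target the envelope of its target, which is injective. This works only because \cref{prop: fine factor through inj} requires nothing of the target beyond injectivity in $\vc^\ast$, and because the iterated enlargements at each subgroup compose. A second point to check is that the homotopies produced in step 5 do not spoil the final map. Since we only need the existence of some quasi-isomorphism, and a map homotopic to a quasi-isomorphism induces the same map on cohomology, this causes no difficulty.
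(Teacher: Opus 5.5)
Your proposal is correct and is essentially the paper's argument. The paper gives no separate proof; it only says the statement holds ``similarly'' to \cref{prop: formal defn}, whose proof is the same sequence you transpose: take injective envelopes, extend the arrows, apply two-out-of-three, lift to minimal models, and replace the lifts by isomorphisms, with $\sM_{\sA}$ (injective and its own minimal model) in the role of $\u{\s E}_X$. One input is tacit in both your proof and the paper's: each term of the zig-zag must be cohomologically $1$-connected so that the minimal models in your step 4 exist, and this follows because $\sM_{\sA}$ exists and every term is quasi-isomorphic to it.
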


\begin{definition}
    A $G$-space $X$ is said to be \emph{equivariantly formal} if it satisfies one of the equivalent conditions in \Cref{prop: formal defn}.
Similarly, a system of $\DGA$s $\sA$ is said to be \emph{equivariantly formal} if it satisfies one of the equivalent conditions in \Cref{prop: formal defn}.
\end{definition}
\begin{remark}
    Note that in \cite{fine1993equivariant} the authors use part $(1)$ of \Cref{prop: formal defn} in order to define equivariant formal spaces.
\end{remark}

\begin{remark}
    A different notion  of equivariant formality,
introduced in \cite{GKM1998} and frequently occurs in the literature, is related to the existence of equivariant extensions of an element in ordinary cohomology. More precisely, a $G$-space is called \emph{equivariantly formal} if its Borel cohomology satisfies $\H_G^\ast(X)\cong\H^\ast(BG)\otimes \H^\ast(X)$ as $\H^\ast(BG)$-modules, or equivalently, the Serre spectral sequence associated to the fibration $X\to X\times _GEG\to BG$ collapses at the $E_2$-page. We emphasize that this notion of equivariant formality is distinct from the one considered in this paper, and we do not address it here.
\end{remark}

\begin{corollary}\label{rem: fixed point set of eq formal is formal}    
    If  a $G$-space $X$ is equivariantly formal, the equivalence between $\u \sE(X)$ and $\uH^\ast(X)$  yields, in particular, a  weak equivalence between $\sE(X^H)$ and $\H^\ast(X^H)$ for all $H\leq G$. Therefore, all $H$-fixed point spaces $X^H$ are non-equivariantly formal. 
In particular, for a trivial $G$-action, the equivariant formality coincides with the non-equivariant formality.
\end{corollary}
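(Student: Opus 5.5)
The plan is to evaluate the zig-zag from \Cref{prop: formal defn}(2) at each orbit. Suppose $X$ is equivariantly formal. Then there is a weak equivalence between $\uH^\ast(X)$ and $\u\sE(X)$, that is, a zig-zag of quasi-isomorphisms of $\sO_G$-$\DGA$s
$$\uH^\ast(X)\xleftarrow{\sim}\sA_1\xrightarrow{\sim}\cdots\xleftarrow{\sim}\sA_n\xrightarrow{\sim}\u\sE(X).$$

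Fix a subgroup $H\leq G$ and evaluate the whole zig-zag at the object $G/H$. Evaluation at $G/H$ is a functor from diagrams of $\DGA$s to $\DGA$s. By the definition of quasi-isomorphism of diagrams, each map in the zig-zag induces a cohomology isomorphism at every level $G/H$. So we obtain a zig-zag of quasi-isomorphisms of $\DGA$s connecting $\uH^\ast(X)(G/H)=\H^\ast(X^H;\QQ)$ and $\u\sE(X)(G/H)=\sE_{X^H}$.

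Hence $\sE_{X^H}$ is weakly equivalent to its cohomology algebra. Since $X$ is $G$-simply connected, $X^H$ is simply connected, and $\sE_{X^H}$ is cohomologically $1$-connected. So, by the equivalent characterization of formality recalled at the start of this section, $X^H$ is formal.

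For the last claim, suppose $G$ acts trivially. Then every fixed point set equals $X$, and every structure map $\hat e_{H,K}$ induces the identity on both $\u\sE(X)$ and $\uH^\ast(X)$. So both are constant diagrams.

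If $X$ is equivariantly formal, the previous step already shows that $X=X^e$ is formal. Conversely, suppose $X$ is formal, witnessed by a zig-zag $\H^\ast(X)\leftarrow A_1\to\cdots\to\sE_X$ of $\DGA$s. Take the constant diagrams $\u A_i$, with all structure maps identities. This gives a zig-zag of quasi-isomorphisms of $\sO_G$-$\DGA$s between $\uH^\ast(X)$ and $\u\sE(X)$, since these are also constant. So condition (2) of \Cref{prop: formal defn} holds, and $X$ is equivariantly formal.

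The only step needing care is that \Cref{prop: formal defn}(2) allows arbitrary diagrams $\sA_i$, not necessarily injective ones. This matters because the constant diagrams $\u A_i$ need not be injective. Since (2) imposes no injectivity condition, no injective envelope is needed in the converse. The rest is routine.
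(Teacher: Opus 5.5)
Your proof is correct and is essentially the paper's argument: evaluating the zig-zag of \Cref{prop: formal defn}(2) at each orbit $G/H$ gives a zig-zag of quasi-isomorphisms between $\sE_{X^H}$ and $\H^\ast(X^H)$, and your constant-diagram argument for the converse in the trivial-action case spells out what the paper leaves under ``in particular''. One small correction to your closing remark: constant diagrams with identity structure maps are in fact injective in $\vc^\ast$, since they are exactly the summand $\u V_G$ in \cref{sum inj}, so the caveat is unnecessary (though harmless, because condition (2) imposes no injectivity anyway).
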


\begin{proposition}
    
\label{cor: restriction preserves formality}
Let $X$ be a $G$-space and let $K\leq G$. If $X$ is equivariantly formal as a $G$-space, then $X$, regarded as a $K$-space by restriction of the action, is equivariantly formal.
\end{proposition}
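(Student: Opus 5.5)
We will use characterization (2) of \cref{prop: formal defn}: $X$ is equivariantly formal as a $G$-space exactly when there is a zig-zag of quasi-isomorphisms of $\sO_G$-diagrams of $\DGA$s connecting $\uH^\ast(X)$ and $\u{\s E}(X)$. The plan is to restrict this zig-zag along a functor $\iota:\sO_K\to\sO_G$. For $K\leq G$ we take the induction functor $\iota(K/L)=G\times_K K/L\cong G/L$. Every $K$-map $K/L\to K/L'$, given by $eL\mapsto kL'$ with $L\leq kL'k^{-1}$, induces the $G$-map $G/L\to G/L'$ with $gL\mapsto gkL'$. This makes $\iota$ a functor.

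For any $\sO_G$-diagram $\sA$, the precomposition $\iota^\ast\sA=\sA\circ\iota$ is an $\sO_K$-diagram of $\DGA$s, and $\iota^\ast$ is functorial in natural transformations. The key observations are the following.

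\begin{enumerate}
\item The $L$-fixed points of $X$ do not depend on whether $L$ is viewed inside $K$ or inside $G$. Hence $\iota^\ast\u{\s E}_G(X)(K/L)=\s E_{X^L}=\u{\s E}_K(X)(K/L)$. On morphisms, the map $K/L\to K/L'$ determined by $k$ induces $X^{L'}\to X^L$, $x\mapsto kx$, and $\iota$ of it induces the same map. So $\iota^\ast\u{\s E}_G(X)=\u{\s E}_K(X)$ as diagrams.
\item The same argument gives $\iota^\ast\uH^\ast_G(X)=\uH^\ast_K(X)$.
\item A quasi-isomorphism is a levelwise cohomology isomorphism, and $\iota^\ast$ only reindexes the levels. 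Hence $\iota^\ast$ sends quasi-isomorphisms to quasi-isomorphisms.
\end{enumerate}

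Applying $\iota^\ast$ to the $G$-zig-zag therefore yields a zig-zag of quasi-isomorphisms of $\sO_K$-diagrams between $\uH^\ast_K(X)$ and $\u{\s E}_K(X)$. By \cref{prop: formal defn}, $X$ is then equivariantly formal as a $K$-space.

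Two hypotheses need checking. The $K$-fixed point sets are among the $G$-fixed point sets $X^L$, so the standing assumptions ($K$-simple connectivity and finite type) transfer. Note also that the intermediate diagrams $\s A_i$ in condition (2) need not be injective, so we do not need $\iota^\ast$ to preserve injectivity.

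The step I expect to require the most care is the definition of $\iota$ on morphisms and the check that $\iota^\ast\u{\s E}_G(X)$ is literally, and not just levelwise, the $K$-PL de Rham system. This includes the conjugation maps coming from elements of $K$. If this identification turns out to be subtle, the fallback is to produce a natural isomorphism $\iota^\ast\u{\s E}_G(X)\cong\u{\s E}_K(X)$ induced by the homeomorphisms $X^L\cong\mathrm{Map}_G(G/L,X)=\mathrm{Map}_K(K/L,X)$. An isomorphism suffices, since it can be added as one more link in the zig-zag.
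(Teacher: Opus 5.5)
Your proposal is correct and follows the paper's argument. The paper also uses condition (2) of \cref{prop: formal defn}, restricts the zig-zag along the canonical functor $\sO_K\to\sO_G$, $K/L\mapsto G/L$, and uses that quasi-isomorphisms are levelwise. Your explicit checks go further than the paper, which leaves them implicit: you verify that the restricted diagrams are exactly $\u{\s E}_K(X)$ and $\uH^\ast_K(X)$, and you note that the intermediate diagrams need not be injective.
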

\begin{proof}
Since $X$ is equivariantly formal as a $G$-space, there exists a weak equivalence (due to \Cref{prop: formal defn}) i.e., zig-zag of quasi-isomorphisms of $\mathscr O_G$-$\DGA$s
$
\uH^\ast(X)
\to\cdots\xleftarrow{}
\u{\s E}_X.
$
The inclusion $K\hookrightarrow G$ induces a restriction functor
$
\mathrm{Res}^G_K:\mathscr O_G\text{-DGA}
\longrightarrow
\mathscr O_K\text{-DGA},
$
given by precomposition with the canonical functor
$
\mathscr O_K\longrightarrow \mathscr O_G,
$ $
K/L\longmapsto G/L .
$
Applying $\mathrm{Res}^G_K$ to the above zig-zag yields a zig-zag of quasi-isomorphisms of $\mathscr O_K$-$\DGA$s
$
\mathrm{Res}^G_K\bigl(\uH^\ast_G(X)\bigr)
\to\cdots\xleftarrow{}
\mathrm{Res}^G_K\bigl(\u{\s E}_X\bigr).
$
Since quasi-isomorphisms for $\s O_G$-$\DGA$s are defined level-wise, the restriction functor preserves quasi-isomorphisms. 
Therefore, $X$ is equivariantly formal as a $K$-space.
\end{proof}
We study the behaviour of equivariant formality under natural topological constructions, including products, wedges, and retractions, by analyzing the corresponding equivariant minimal models. While some of these constructions preserve equivariant formality, many constructions known to be formal non-equivariantly do not generalize to equivariant spaces. We provide counterexamples that illustrate the failure of several important results in the equivariant setting (see \cref{ex: join is not eqformal} and the subsequent remarks).

\subsection{Products.} First, we consider products in the equivariant setup. Suppose $G$ acts on $X_i$ for $i=1,\dots, m$. This induces a canonical diagonal $G$-action on $\prod_{i=1}^m X_i$ defined by $$g\cdot(x_1,\dots,x_m)=(gx_1,\dots, gx_m).$$ 
\begin{theorem}
\label{prop:product of equivariantly formal spaces is formal}
    Let $G$ be a finite group acting on spaces $X_i$ for $i=1,\dots, m$, and let $\sM_i$ denote the minimal models of $X_i$. Consider the $G$-space $\prod_{i=1}^m X_i$ equipped with the diagonal $G$-action.
    \begin{enumerate}
        \item 
     Then, the tensor product $\bigotimes_{i=1}^m\s M_i$ is the minimal model for $\prod_{i=1}^m X$. 
    \item Moreover, if each $X_i$ is  equivariantly formal, then the product $\prod_{i=1}^m X_i$ is also equivariantly formal.
    \end{enumerate}
\end{theorem}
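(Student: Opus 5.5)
It suffices to treat $m=2$ and then induct on $m$. Part (1) has three ingredients. First, $\sM_1\otimes\sM_2$ is again a minimal system. Second, it maps quasi-isomorphically to $\u{\s E}(X_1\times X_2)$. Third, uniqueness of minimal models (\cref{thm: uniqueness of minimal alg}) then identifies it with $\sM_{X_1\times X_2}$.

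For the first ingredient, write $\sM_i=\bigcup_n \sM_{i,n}$ with $\sM_{i,n}=\sM_{i,n-1}^{\alpha}(\uV_{i,n})$. Set $\sN_n:=\sM_{1,n}\otimes\sM_{2,n}$. I will show that $\sN_n$ is obtained from $\sN_{n-1}$ by two successive elementary extensions, first by $\uV_{1,n}$ (via $\alpha_1$ composed with the inclusion $\sM_{1,n-1}\hookrightarrow\sN_{n-1}$), then by $\uV_{2,n}$. The injective resolutions and the ladder maps $\alpha_{i,j}$ of \cref{construction of elementary extn} are simply transported along the inclusions of the tensor factors. Since every $\uV_{i,n}$ sits in degree $\geq n$, the filtration can be reindexed to satisfy \cref{def:minimal}. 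Injectivity of each stage holds because tensor products of injective systems are injective, as already used in the construction of elementary extensions.

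For the second ingredient, let $\rho_i:\sM_i\to\u{\s E}(X_i)$ be the given quasi-isomorphisms. Pulling back along the two projections and multiplying gives a map $\rho:\sM_1\otimes\sM_2\to\u{\s E}(X_1\times X_2)$. At each level $G/H$ we have $(X_1\times X_2)^H=X_1^H\times X_2^H$ because the action is diagonal. Hence $\rho(G/H)$ is the usual map $\sM_1(G/H)\otimes\sM_2(G/H)\to \sE_{X_1^H}\otimes\sE_{X_2^H}\to\sE_{X_1^H\times X_2^H}$. The first arrow is a quasi-isomorphism by the algebraic Künneth theorem over $\QQ$, and the second by the PL de Rham Künneth theorem, which applies since all fixed point sets are of finite type. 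So $\rho$ is a levelwise quasi-isomorphism. Naturality in $G/H$ holds because the structure maps of $\u{\s E}$ are induced by inclusions of fixed point sets, and these commute with the projections.

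For part (2), I use criterion (2) of \cref{prop: formal defn}. Each $X_i$ has a zig-zag of quasi-isomorphisms between $\uH^\ast(X_i)$ and $\u{\s E}_{X_i}$. Tensoring levelwise preserves quasi-isomorphisms over $\QQ$, so combining the two zig-zags (padding with identities so they have equal length) yields a zig-zag from $\uH^\ast(X_1)\otimes\uH^\ast(X_2)$ to $\u{\s E}_{X_1}\otimes\u{\s E}_{X_2}$. I extend it at the end by the quasi-isomorphism $\u{\s E}_{X_1}\otimes\u{\s E}_{X_2}\to\u{\s E}_{X_1\times X_2}$ from the previous step. The Künneth isomorphism identifies $\uH^\ast(X_1)\otimes\uH^\ast(X_2)$ with $\uH^\ast(X_1\times X_2)$ as diagrams, again because fixed points of the product are products of fixed points. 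This gives the required weak equivalence; note that these intermediate diagrams need not be injective, which criterion (2) allows.

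The main obstacle I expect is the first ingredient of part (1): checking that the differential on the newly adjoined generators satisfies the elementary-extension form inside the tensor product. In particular, the target of $\alpha$ must be $Z^{n+1}(\sN_{n-1})$ even though $\sN_{n-1}$ contains higher-degree generators coming from the resolutions, which are placed in degrees $n+j$. If reindexing becomes awkward, the fallback is to avoid proving minimality of the tensor product directly: take the minimal model $\sM$ of $\sM_1\otimes\sM_2$, which exists by \cref{thm: existence of minimal model}, and compare generators through \cite[Theorem 4.16]{LS2002}. There $\u\pi_n(X_1\times X_2)=\u\pi_n(X_1)\oplus\u\pi_n(X_2)$ at every level, which pins down the generating systems $\uV_n$.
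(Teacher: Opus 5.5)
Your proposal is correct and follows essentially the same route as the paper: you prove minimality of $\sM_1\otimes\sM_2$ stage by stage, use the K\"unneth quasi-isomorphism $\sM_1\otimes\sM_2\to\u{\s E}(X_1)\otimes\u{\s E}(X_2)\to\u{\s E}(X_1\times X_2)$, and tensor the two weak equivalences for part (2). The one adjustment is in the minimality step: a ``reindexing'' of two successive extensions would put degree-$n$ generators at a stage index larger than $n$, contrary to \cref{def:minimal}, so instead combine them, as the paper does, into the single elementary extension $\sN_n=\sN_{n-1}(\uV_{1,n}\oplus\uV_{2,n})$ via $\alpha_1\oplus\alpha_2$, using the direct sum of the two injective resolutions; this also removes your anticipated obstacle, since each $\alpha_i$ already lands in $Z^{n+1}(\sM_{i,n-1})\subseteq Z^{n+1}(\sN_{n-1})$.
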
\begin{proof}
It is enough to prove the proposition for two spaces, say $X$ and $Y$. The result for finite products then follows by a finite iteration.

\textbf{1.}    Let us denote the projection maps onto the components by $\pi_X: X \times Y \ra X$ and $\pi_Y: X \times Y \ra Y$.  Both are clearly $G$-maps,  since $X\times Y$ is equipped with the diagonal $G$-action. Applying the equivariant PL de Rham functor, we obtain morphisms of $\DGA$ diagrams $\u{\s E}(\pi_X): \u{\s E}(X)\ra  \u{\s E}(X\times Y)$ and $\u{\s E}(\pi_Y): \u{\s E}(Y)\ra  \u{\s E}(X\times Y)$ respectively. Now, the map
    $$\u{\s E}(\pi_X)\cdot \u{\s E}(\pi_Y) :  \u{\s E}(X)\otimes \u{\s E}(Y) \ra  \u{\s E}(X\times Y)$$ 
    gives a quasi-isomorphism of $\sO_G$-$\DGA$s, where $\u{\s E}(X)\otimes \u{\s E}(Y)$ is regarded as an $\sO_G$-$\DGA$ with $G$ acting componentwise.
    A non-equivariant argument for this quasi-isomorphism can be found in \cite[Corollary 10.10, p. 142: Example 2]{FHT}. Therefore, we may use $\u{\s E}(X)\otimes \u{\s E}(Y)$ as a model for the PL de   Rham system of the product in place of $\u{\s E}(X\times Y)$.
    
   Consider quasi-isomorphisms $\rho: \sM \to \underline{\mathscr E}{(X)}$ and $\rho': \sN \to \underline{\mathscr E}(Y)$. Applying the Künneth formula, we obtain a quasi-isomorphism of $\sO_G$-$\DGA$s: 
   $
 \rho \otimes \rho': \sM\otimes \s N \to \underline{\mathscr E}{(X)}\otimes  \underline{\mathscr E}{(Y)}.
   $
   We only need to show that $\sM\otimes \s N$ is a minimal system, that is, obtained by iterative elementary extensions.

   Recall that the $n$-th stages of the minimal models $\sM$ and $\s N$ are obtained by taking elementary extensions of $\s M_{n-1}$ and $\s N_{n-1}$ with respect to some maps $\gamma: \u V_n \to \u Z^{n+1}(\sM_{n-1})$ and $\gamma': \u V'_n \to \u Z^{n+1}(\sN_{n-1})$, respectively. Taking injective resolutions of $\u V_n$ and $\u V'_n$, the $n$-th truncated part of the minimal models  take the following forms:
   $
   \sM_{n}=\s M_{n-1} \otimes \QQ(\uV_{{n},0} \oplus \cdots\oplus  \uV_{n,m} ),
   $ and 
   $
   \sN_{n}=\s N_{n-1} \otimes \QQ(\u V'_{n,0}\oplus \cdots\oplus   \uV'_{n,m'} )
   $
   for some integers $m, m'$.
   Therefore, 
  $$ \sM_{n}\otimes \s N_n=(\s M_{n-1} \otimes \s N_{n-1})\otimes \QQ((\uV_{{n},0}\oplus \u V'_{n,0}) \oplus (\uV_{{n},1}\oplus \u V'_{n,1}) \oplus \cdots).$$   
   We give an inductive argument to show that $\sM \otimes \s N$ is obtained by elementary extensions. We start with $\sM_0=\u \QQ$ and $\s N_0=\u \QQ$, so that $\sM_0 \otimes \s N_0=\u \QQ$, and proceed by induction on $n$.
   We inductively assume that the $(n-1)$-th stage, $\sM_{n-1}\otimes \s N_{n-1}$, is a minimal system.

   We define
   $
   \u W_n :=\u V_n \oplus \u V'_n. 
   $ 
   Then, we have a morphism in the category $\vc^\ast$ given by 
  $$ \gamma\oplus \gamma': \u W_n \to  Z^{n+1}(\sM_{n-1})\oplus  Z^{n+1}(\sN_{n-1}) \subseteq  Z^{n+1}(\sM_{n-1} \otimes \sN_{n-1}).$$
   From the decomposition of $\s M_n \otimes \s N_n$, it is clear that it is obtained by an elementary extension via $\gamma\oplus \gamma'$:
   $
    \sM_n\otimes \s N_n = (\sM_{n-1}\otimes \s N_{n-1})(\u W_n),
   $
   which implies that $\sM_n\otimes \s N_n$ is a minimal system.
   Thus, $\s M\otimes \s N=\colim_n (\sM_n\otimes \s N_n)$ is a minimal system.

  \textbf{2.}  Finally, we establish the equivariant formality of the product. Suppose there are quasi-isomorphisms $\sigma:\sM \to \s I(\uH^\ast(X))$ and $\sigma':\sN \to \s I(\uH^\ast(Y))$, since $X$ and $Y$ are equivariantly formal. Tensoring these together, we obtain a quasi-isomorphism 
  $$ \sigma\otimes \sigma':\s M\otimes \s N \to  \s I(\uH^\ast(X)) \otimes \s I(\uH^\ast(Y)).$$
   
   This yields a weak equivalence given by the following zig-zag of quasi-isomorphisms:
    $$\underline{\mathscr E}{(X)}\otimes \underline{\mathscr E}{(Y)}  \xleftarrow[]{\rho\otimes\rho'}\s M\otimes \s N \xra{\sigma\otimes \sigma'}  \s I(\uH^\ast(X)) \otimes \s I(\uH^\ast(Y)) \xleftarrow[]{} \uH^\ast(X) \otimes \uH^\ast(Y)\cong\uH^\ast(X\times Y).$$
   The third map is obtained by tensoring the inclusions into the injective envelopes, $\uH^\ast(X)\to\s I(\uH^\ast(X))$ and $\uH^\ast(Y)\to\s I(\uH^\ast(Y))$, which is clearly a quasi-isomorphism.
   By \cref{prop: formal defn}, it follows that $X\times Y$ is equivariantly formal.
\end{proof}

We now turn to a different class of $G$-spaces arising naturally from topological spaces. Specifically, we consider the cyclic permutation action of $G=\C_{p^k}$ on the $p^k$-fold product $X^{p^k}.$
Let $\xi$ denote the generator of $G = \C_{p^k}$. 
The action is given by
    $
        \xi \cdot (x_1, x_2, \dots, x_{p^k}) = (x_2, x_3, \dots, x_{p^k}, x_1).
    $

For this space, the equivariant minimal model can be explicitly constructed, and the equivariant formality of the $G$-space can be completely determined by the non-equivariant formality of the underlying space.
\begin{theorem}\label{permutation formal}
 Let $G = \C_{p^k}$ act on the $p^k$-fold product $X^{p^k}$ by cyclic permutation of the factors. Then $X^{p^k}$ is equivariantly formal if and only if $X$ is non-equivariantly formal.
\end{theorem}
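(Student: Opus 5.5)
The forward direction is the easy one. Assume $X^{p^k}$ is equivariantly formal as a $\C_{p^k}$-space. I would apply \cref{rem: fixed point set of eq formal is formal} with $H=\C_{p^k}$. The fixed point set of the cyclic permutation action is the diagonal $\{(x,\dots,x)\}\cong X$, so $X$ is formal. One could also take $H=e$: then $X^{p^k}$ is formal, and $X$ is a retract of $X^{p^k}$, but the fixed-point argument is more direct. More generally, for $H=\C_{p^i}$ the fixed points are $(X^{p^{k-i}})$, embedded as tuples that are constant on $\C_{p^i}$-orbits of indices. This identification will be needed below.

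For the converse, assume $X$ is formal, with Sullivan minimal model $(\QQ(W),d)\xrightarrow{\sim}\H^\ast(X)$. I would first describe the diagram explicitly. For the orbit $G/p^i$ (fixed points of $\C_{p^i}$) the algebra is
$$\sA(G/p^i)=\H^\ast(X)^{\otimes p^{k-i}},$$
and the structure maps $\sA(\hat e_{p^i,p^{i-1}})$ are induced by the diagonal inclusions $X^{p^{k-i}}\to X^{p^{k-i+1}}$. On cohomology these are multiplication maps $\H^{\otimes p^{k-i+1}}\to\H^{\otimes p^{k-i}}$ that multiply $p$ blocks of factors together. They are surjective, so by \cref{thm: injectivity in Z_p^n} the system $\uH^\ast(X^{p^k})$ is already injective and $\sI$ does nothing.

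Next I would build a candidate model $\sM$ with $\sM(G/p^i)=\QQ(W)^{\otimes p^{k-i}}$. Its structure maps are the multiplication maps $\mu:\QQ(W)^{\otimes p}\to\QQ(W)$ applied blockwise. These are DGA maps, and $\mu$ is a surjective algebra map. The formality quasi-isomorphism $\phi:\QQ(W)\to\H^\ast(X)$ then gives levelwise quasi-isomorphisms $\phi^{\otimes p^{k-i}}$, and these commute with the structure maps because both sides use multiplication. On the space side I would similarly map $\sM\to\u\sE(X^{p^k})$ using $\psi:\QQ(W)\to\sE_X$, the pullbacks along the projections, and cup product, as in \cref{prop:product of equivariantly formal spaces is formal}. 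This gives a zig-zag $\u\sE\xleftarrow{\sim}\sM\xrightarrow{\sim}\uH^\ast$. Naturality holds because restricting the product forms $\prod\pi_j^\ast\psi(w_j)$ to the diagonal gives the product in $\sE_X$. The levelwise quasi-isomorphism follows from the Künneth formula. Since quasi-isomorphisms are levelwise, this already yields condition (2) of \cref{prop: formal defn}, and no minimality of $\sM$ is needed. I would still note that $\sM$ is injective by \cref{thm: injectivity in Z_p^n}, so it is a genuine system.

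The main obstacle is checking that the map $\sM\to\u\sE(X^{p^k})$ is an honest natural transformation of $\sO_G$-diagrams, including equivariance under the Weyl group action $G/\C_{p^i}$ that cyclically permutes the tensor factors. The cup-product map $\u\sE(X)^{\otimes}\to\sE_{X^m}$ is only graded-commutative up to the Koszul sign, so I would verify that permuting factors with the correct sign matches $\xi^\ast$ on $\sE$. If PL forms make strict commutation awkward, I would instead compare both sides through the product model $\sE_X^{\otimes p^{k-i}}$, with $\sE_X^{\otimes p^{k-i}}\to\sE_{X^{p^{k-i}}}$ a quasi-isomorphism. Then the zig-zag $\sM\to\sE_X^{\otimes}\to\sE_{X^{p^{k-i}}}$ is natural by construction, because the diagonal restriction corresponds to multiplication in $\sE_X$. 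For the explicit minimal model, which the paper probably describes as well, one would still have to apply \cref{prop: explicit minimal cons cpk}, since $\ker\mu$ produces the $\uV_{n,1}$-generators. I expect that step to be the delicate part, but it is not needed for formality.
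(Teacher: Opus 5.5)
Your proof is correct and is essentially the paper's. The paper's intermediate system $\s F$, with $\s F(G/p^i)=\sE(X)^{\otimes p^i}$, multiplication structure maps and Koszul-signed cyclic permutation, is exactly your product model, and it then uses $\sM(G/p^i)=M^{\otimes p^i}$ and the fixed-point corollary just as you do. The only difference is that you invoke condition (2) of \cref{prop: formal defn} directly, and so legitimately skip the paper's check that $\sM$ is minimal.

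Your closing speculation is wrong, however: no $\uV_{n,1}$-generators arise. On degree-$n$ generators the multiplication map is the blockwise addition $V_n^{\oplus p^i}\to V_n^{\oplus p^{i-1}}$, which is surjective. Hence the generator systems are injective by \cref{thm: injectivity in Z_p^n}, and $\sM$ is itself the equivariant minimal model; this is exactly how the paper proves minimality.
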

\begin{proof}
Let us denote the product as $Y = X^{p^k}$.
    For each $i = 0, \dots, k$, the fixed point space $Y^{(p^i)}$ is homeomorphic to $X^{p^i}$, where $(p^i)$ denotes the subgroup of $\C_{p^k}$ of order $p^{k-i}$. For each $i\geq 1$, the inclusion $Y^{(p^{i-1})} \hookrightarrow Y^{(p^{i})}$ is equivalent to the diagonal map
       $$ X^{p^{i-1}} \xrightarrow{\Delta} X^{p^{i-1}} \times \cdots \times X^{p^{i-1}} \cong X^{p^{i}}.$$
    
    Therefore, the induced map on cohomology
    $
       \H^\ast(Y^{(p^{i-1})}) \xrightarrow{\Delta^\ast} \H^\ast(Y^{(p^i)})
    $
    can be identified with the multiplication map $\mu: \H^\ast(X)^{\otimes p^i}\ra \H^\ast(X)^{\otimes p^{i-1}}$, given by $a_1 \otimes \cdots \otimes a_p \mapsto a_1 \cdots a_p$, where $a_j \in \H^\ast(X)^{\otimes p^{i-1}}.$ Clearly, this map is surjective.
    
    By the definition of the cohomology system, $\uH^\ast (Y)(G/H)=\H^\ast(Y^H)$, and the structure maps $\uH^\ast(Y) (\hat a):\uH^\ast (Y)(G/H)\to \uH^\ast (Y)(G/H')$ for $\hat a:G/H\to G/H'$ are obtained by the inclusion $Y^{H'}\xra{a \cdot} Y^H.$ In particular, the structure map $\uH^\ast(Y)(\hat e_{(p^i),(p^{i-1})})$ is exactly $\Delta^\ast.$
    Therefore, the injectivity of $\uH^\ast(Y)$ in $\vc^\ast$ follows from \cref{thm: injectivity in Z_p^n}.
    
    Note that the projection $\pi_j:X^{p^{i}}=X^{p^{i-1}}\times \cdots\times X^{p^{i-1}}\ra X^{p^{i-1}}$ onto the $j$-th factor induces a $\DGA$ map: 
    $ 
    \sE(\pi_j): \mathscr{E}(X^{p^{i-1}}) \to \mathscr{E}{(X^{p^i})},
    $
    by the functoriality of the PL de Rham algebra for $j=1,\dots, p$. These maps together produce a quasi-isomorphism, given by the multiplication map  
    \begin{align}\label{eq: quasi}
      \prod_{j=1}^{p}  \sE(\pi_j): \mathscr{E}(X^{p^{i-1}})^{\otimes p} \to \mathscr{E}{(X^{p^i})},
    \end{align}
    defined by $\left(\prod_{j=1}^p  \sE(\pi_j)\right)(a_1 \otimes \dots \otimes a_{p}):=\sE(\pi_1)(a_1)\cdots \sE(\pi_p)(a_{p})$.

    We obtain a commutative diagram
$$\begin{tikzcd}
	{\underline{\mathscr{E}}(Y)(G/p^{i-1})} && {\underline{\mathscr{E}}(Y)(G/p^{i-1})} \\
	{\mathscr{E}{(X^{p^i})}} && {\mathscr{E}{(X^{p^{i-1}})}} \\
	{\mathscr{E}{(X^{p^{i-1}})}^{\otimes p}} && {\mathscr{E}{(X^{p^{i-1}})}}
	\arrow["{\underline{\mathscr E}(Y)(\hat e_{p^i,p^{i-1}})}", from=1-1, to=1-3]
	\arrow[shift left, no head, from=1-1, to=2-1]
	\arrow[shift right, no head, from=1-1, to=2-1]
	\arrow[shift right, no head, from=1-3, to=2-3]
	\arrow[shift left, no head, from=1-3, to=2-3]
	\arrow["{\mathscr E(\Delta)}", from=2-1, to=2-3]
	\arrow["{\prod_{j=1}^p\mathscr{E}(\pi_j)}", from=3-1, to=2-1]
	\arrow["\mu"', from=3-1, to=3-3]
	\arrow["\id"', from=3-3, to=2-3]
\end{tikzcd}$$
    since $(\s E(\Delta)\circ \prod_{j=1}^p\s E(\pi_j))(a_1\otimes\cdots \otimes a_p)=\sE(\pi_1\circ \Delta)(a_1)\cdots \sE(\pi_p\circ \Delta)(a_{p})=a_1a_2\cdots a_p$, because $\pi_j\circ \Delta=\id$ for all $j.$
Therefore, the level-wise PL de Rham system $\u{\s E}(Y)$ is quasi-isomorphic to a system ${\s F}$, where $\s F(G/p^i)=\s E(X)^{\otimes p^i}$ with the structure maps $\s F(\hat e_{p^i,p^{i-1}})$ given by the multiplication map $\mu$ as described in the above diagram.

Next, we need to identify $\mathscr F(X^{p^i})(\hat \xi)$. The map corresponding to $\hat \xi:G/{p^i}\to G/p^i$ is given by the cyclic permutation $\sigma:X^{p^i}\to X^{p^i}$ defined by $\sigma(x_1,x_2, \dots,x_{p^i})=(x_2,\cdots,x_{p^i},x_1)$. Therefore, $\u{\s E}(Y)(\hat \xi)=\s E(\sigma).$
To ensure that ${\s F}\to\u {\s E}(X)$ is a map of $\s O_G$-$\DGA$s, we require the following diagram to commute:
$$\begin{tikzcd}
	{\mathscr{E}{(X^{p^i})}} && {\mathscr{E}{(X^{p^{i}})}} \\
	{\mathscr{E}{(X)}^{\otimes {p^i}}} && {\mathscr{E}{(X)}^{\otimes {p^i}}}
	\arrow["{\u{\s E}(Y)(\hat \xi)}", from=1-1, to=1-3]
	\arrow["{\prod_{j=1}^{p^i}\mathscr{E}(\pi_j)}", from=2-1, to=1-1]
	\arrow["{{\s F}(\hat \xi)}"', from=2-1, to=2-3]
	\arrow["{\prod_{j=1}^{p^i}\mathscr{E}(\pi_j)}"', from=2-3, to=1-3]
\end{tikzcd}$$
We note that
    \begin{align*}
    \left(\u{\s E}(Y)(\hat \xi)\circ\prod_{j=1}^{p^i}\s E(\pi_j)\right)\Big(\bigotimes_{j=1}^{p^i}\alpha_j\Big)
    &=\s E(\sigma)(\s E(\pi_1)(\alpha_1)\cdots\s E(\pi_{p^i})(\alpha_{p^i}))\\
    &=\s E(\pi_1\circ\sigma)(\alpha_1)\cdots\s E(\pi_{p^i}\circ \sigma)(\alpha_{p^i})\\
    &=\s E(\pi_2)(\alpha_1)\cdots \s E(\pi_{p^i})(\alpha_{{p^i}-1})\cdot\s E(\pi_1)(\alpha_{p^i})\\
    &=(-1)^{|\alpha_{p^i}|(\sum_{j\neq p^i}|\alpha_j|)} \s E(\pi_1)(\alpha_{p^i})\s E(\pi_2)(\alpha_1)\cdots \s E(\pi_{p^i})(\alpha_{{p^i}-1}),
    \end{align*}
where the third equality follows from the fact that $\pi_j\circ \sigma=\pi_{j+1}$ for $j=1,\dots, p^{i}-1$ and $\pi_{p^i}\circ \sigma=\pi_{1}$.
Thus, by setting 
$${\s F}(\hat \xi)(\alpha_1\otimes \cdots\otimes \alpha_{p^i})=(-1)^{|\alpha_{p^i}|(|\alpha_1|+\cdots+|\alpha_{p^i-1}|)} \alpha_{p^i} \otimes \alpha_1 \otimes \cdots \otimes \alpha_{{p^i}-1},$$
the requirement is fulfilled. We thus obtain a quasi-isomorphism ${\s F}\to\u {\s E}(X).$

Let $\rho:M\to \mathscr{E}(X)$ be the non-equivariant minimal model for $X$. We produce an equivariant minimal model $\sM$ for $\u{\s F}$ using the model $M$. 
    We define $\sM$ by:
    $
        \sM(G/p^i) = M^{\otimes p^i},
    $
    and define the action of the generator $\xi$ by
        $$\sM(\hat \xi)(\alpha_1 \otimes \cdots \otimes \alpha_{p^i}) =(-1)^{|\alpha_{p^i}|(|\alpha_1|+\cdots+|\alpha_{p^i-1}|)} \alpha_{p^i} \otimes \alpha_1 \otimes \cdots \otimes \alpha_{{p^i}-1}.$$

    The structure maps $\sM(\hat{e}_{p^i,p^{i-1}})$ are given by the multiplication maps. Since the injectivity of $\sM$ follows from the same reasoning used for the injectivity of $\uH^\ast(Y)$, $\sM$ is indeed a valid system of $\DGA$s.
    By the construction of $\s M$, we get a quasi-isomorphism $\s M\to \s F.$
    Thus, $\s M$ is a model of $Y$, given by the composition:
$\s M\to\s F\to \u {\s E}(Y).$

Next, we prove that $\sM$ is minimal by showing that $\sM$ can be obtained via an increasing sequence of elementary extensions.
Since $M$ is non-equivariantly minimal, it can be written as $M=\QQ (V_2\oplus \cdots \oplus V_j\oplus \cdots)$, where the $V_j$ are vector spaces concentrated in homogeneous degree $j$. We define $\uV_n$ by $\uV_n(G/p^i)=  V_n^{\oplus p^i}$ and set $\uV_n(\hat e_{p^i,p^{i-1}}): \bigoplus_{r=1}^p(V_n^{\oplus p^{i-1}}) \ra V_n^{\oplus p^{i-1}}$ to be the addition map
$$\uV_n(\hat e_ {p^i,p^{i-1}})(v_1, \dots, v_p) =v_1 + \dots+ v_p.$$
Clearly, $\uV_n$ is injective by \cref{thm: injectivity in Z_p^n}. Inductively, we construct $\sM_n$ as the elementary extension $\sM_n:=\sM_{n-1}(\uV_n)=\s M_{n-1}\otimes  \QQ(\u V_n)$. Since $\s M=\colim_n \s M_n$, $\s M$ is a minimal system. Now, assume $X$ is formal. We then have a non-equivariant quasi-isomorphism $M \to \H^\ast(X)$. Since the cohomology system $\uH^\ast(Y)$ is injective, we can follow the same techniques as above to obtain a quasi-isomorphism:
    $
        \sM \to \uH^\ast(Y).
    $
    Hence, $Y = X^{p^k}$ is equivariantly formal.
    
    The converse follows directly from \cref{rem: fixed point set of eq formal is formal}.
\end{proof}

\begin{example}
    Let $G = \C_{p^k}$ act on $X^{p^k}$ by permuting the factors, where $X = K(\ZZ,2)$. We describe the equivariant minimal model $\sM$ as follows.

    At the orbit level $G/p^i$, we have $\sM(G/p^i) =\QQ (\alpha)^{\otimes p^i}$, where $|\alpha|=2$. The group action and structure maps are defined by:
       $$ \sM(\hat \xi)(\alpha_1 \otimes \cdots \otimes \alpha_{p^i}) = \alpha_{p^i}\otimes \alpha_1 \otimes \alpha_2 \otimes \cdots \otimes \alpha_{p^i-1},$$
    
        $$\sM(\hat e_{p^i,p^{i-1}})(\alpha_1 \otimes \cdots \otimes \alpha_{p^i}) = 
        \bigotimes_{r=1}^{p^{i-1}} \left( \alpha_r \alpha_{p^{i-1} + r} \alpha_{2p^{i-1} + r} \cdots \alpha_{p^i - p^{i-1} + r} \right).$$   
    Since $X=K(\ZZ,2)$ is a formal space, the space $X^{p^k}$ with the prescribed $G$-action is equivariantly formal by \cref{permutation formal}.
\end{example}
\subsection{Wedges.}
Similar to the diagonal action on a product, one can define a diagonal action on wedge $\bigvee_{i=1}^m X_i$ by identifying it with the subspace $\bigcup_{i=1}^m (\ast\times\cdots \times X_i\times \cdots\times \ast) \subseteq \prod_{i=1}^m X_i$ for the based spaces $X$ and $Y$.

\begin{proposition}
  Let $G$ be a finite group acting on based spaces $X_i$ for $i=1,\dots, m$. If each $X_i$ is equivariantly formal, then the wedge $\bigvee_{i=1}^m X_i$ equipped with the diagonal action is also equivariantly formal.  
\end{proposition}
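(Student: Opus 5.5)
It suffices to treat two based $G$-spaces $X$ and $Y$; the general case follows by iterating, since $\bigvee_{i=1}^m X_i = (\bigvee_{i=1}^{m-1}X_i)\vee X_m$ $G$-equivariantly. The plan is to write down a model of $\u{\s E}(X\vee Y)$ as a levelwise fibre product and then compare it with the analogous fibre product of cohomology systems, using \cref{prop: formal defn}. First observe that for every $H\leq G$ we have $(X\vee Y)^H = X^H\vee Y^H$, because the basepoint is $G$-fixed and the action preserves each wedge summand. Non-equivariantly, $\sE(A\vee B)\to \sE(A)\times_{\QQ}\sE(B)$ is a quasi-isomorphism, where $\times_\QQ$ is the fibre product over the augmentations to $\QQ$ given by the basepoint (see \cite{FHT}). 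The maps involved are induced by the $G$-inclusions and retractions of the summands, so they are natural in $G/H$. Hence we get a quasi-isomorphism of $\sO_G$-$\DGA$s $\u{\s E}(X\vee Y)\to \u{\s E}(X)\times_{\u\QQ}\u{\s E}(Y)$. Similarly, $\uH^\ast(X\vee Y)\cong \uH^\ast(X)\times_{\u\QQ}\uH^\ast(Y)$ as systems of graded algebras, since the reduced cohomology of a wedge is levelwise the direct sum.

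Next, use the formality hypothesis. By \cref{prop: formal defn} we have zig-zags $\u{\s E}(X)\xleftarrow{\sim}\s M_X\xrightarrow{\sim}\s I(\uH^\ast(X))\xleftarrow{\sim}\uH^\ast(X)$, and similarly for $Y$. All of these maps are augmented: minimal systems are augmented, and $\s I(\uH^\ast(X))$ carries an augmentation extending that of $\uH^\ast(X)$, because the associated acyclic pieces are concentrated in positive degrees. We may modify the quasi-isomorphisms out of the minimal models so that they commute with the augmentations. In degree $0$ everything is $\u\QQ$, so this only requires that the maps preserve the unit, which they do. Forming levelwise fibre products over $\u\QQ$ then yields a zig-zag
\[
\u{\s E}(X)\times_{\u\QQ}\u{\s E}(Y)\xleftarrow{}\s M_X\times_{\u\QQ}\s M_Y\xrightarrow{}\s I(\uH^\ast(X))\times_{\u\QQ}\s I(\uH^\ast(Y))\xleftarrow{}\uH^\ast(X)\times_{\u\QQ}\uH^\ast(Y).
\]
Each map is levelwise a quasi-isomorphism, since the cohomology of a fibre product over $\QQ$ of augmented $1$-connected $\DGA$s is $\QQ\oplus\tilde\H(A)\oplus\tilde\H(B)$ and this is functorial. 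Combining this zig-zag with the identifications from the first step gives a weak equivalence between $\u{\s E}(X\vee Y)$ and $\uH^\ast(X\vee Y)$, which is condition (2) of \cref{prop: formal defn}. Notice that condition (2) requires only a zig-zag of quasi-isomorphisms of diagrams of $\DGA$s, with no injectivity condition on the intermediate objects. This is why we work with (2) rather than trying to construct the minimal model of the wedge directly.

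The main obstacle is the first step: checking that the wedge comparison map is a map of $\sO_G$-diagrams, i.e.\ compatible with the structure maps $\u{\s E}(\hat e_{H,K})$ and with the Weyl group actions. Concretely, this means checking that the maps $\sE(X^H\vee Y^H)\to\sE(X^H)$ induced by the inclusions $X^H\hookrightarrow X^H\vee Y^H$ commute with restriction along $X^K\subseteq X^H$ and with translation by elements $g$ in the normaliser. These inclusions are $G$-maps between the diagrams of fixed-point sets, so naturality of $\sE$ should give this directly. A secondary point is the augmentation compatibility of the maps in the formality zig-zag. If it causes trouble, I would first replace $\s I(\uH^\ast(X))$ by its augmentation-ideal version: keep $\u\QQ$ in degree $0$ and note that the injective enlargements add only positive-degree acyclic generators. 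With that replacement, the unit-preserving maps are automatically augmented.
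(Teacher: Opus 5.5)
Your proposal is correct and is essentially the paper's proof. The paper builds the same zig-zag $\u{\s E}(X\vee Y)\leftarrow \s M\oplus\s N\to\s I(\uH^\ast(X))\oplus\s I(\uH^\ast(Y))\leftarrow\uH^\ast(X)\oplus\uH^\ast(Y)=\uH^\ast(X\vee Y)$, where its ``direct sum over $\QQ$'' is exactly your fibre product $\times_{\u\QQ}$, and concludes via condition (2) of \cref{prop: formal defn}. You are more careful than the paper in treating $\u{\s E}(X\vee Y)\to\u{\s E}(X)\times_{\u\QQ}\u{\s E}(Y)$ as a quasi-isomorphism rather than an equality, and in checking augmentations; note, however, that $\u{\s E}(X)$ is not $\u\QQ$ in degree $0$, and the compatibility with augmentations is automatic because the \emph{sources} $\s M_X$ and $\uH^\ast(X)$ are $\u\QQ$ in degree $0$.
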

\begin{proof}
It is enough to prove the proposition for two spaces, say $X$ and $Y$. 
    Let the minimal models of $X$ and $Y$ be given by the quasi-isomorphisms $\rho: \sM \to \u{\s E}(X)$ and $\rho': \sN \to \u{\s E}(Y)$. These induce a quasi-isomorphism $\rho\oplus \rho': \sM\oplus \s N \to \u{\s E}(X)\oplus \u{\s E}(Y)=\u{\s E}(X\vee Y).$ Here, the direct sums are taken over the base field $\QQ.$

    Since $X$ and $Y$ are equivariantly formal, there exist quasi-isomorphisms $\sigma:\sM \to \s I(\uH^\ast(X))$ and $\sigma':\sN \to \s I(\uH^\ast(Y))$.
    These induce a quasi-isomorphism $\sigma\oplus \sigma': \sM\oplus \s N \to \s I(\uH^\ast(X))\oplus \s I(\uH^\ast(Y)).$

    Putting these together, we obtain a zig-zag of quasi-isomorphisms   
    $$\u{\s E}(X\vee Y)\xleftarrow[]{\rho\oplus\rho'} \sM\oplus \s N \xra{\sigma\oplus \sigma'} \s I(\uH^\ast(X))\oplus \s I(\uH^\ast(Y))\xleftarrow[]{} \uH^\ast(X)\oplus \uH^\ast(Y)=\uH^\ast(X\vee Y).$$
    
    Here, the third map is obtained by taking the direct sum of the inclusions into the injective envelopes.
    By \cref{prop: formal defn}, it follows that $X\vee Y$ is equivariantly formal.
\end{proof}

Let $G$ be a finite group and let
$Y=\bigvee_{g\in G} X_g$
be the wedge of $|G|$ copies of a simply connected based space $X$, indexed by the elements of $G$. We can define a $G$-action on $Y$ by an action on the coordinates
$h\cdot X_g=X_{hg},$ for $ h,g\in G.$

\begin{theorem}\label{thm: formality of G wedge}
Let $G$ be a finite group and let
$Y=\bigvee_{g\in G} X_g$
be the wedge of $|G|$ copies of a simply connected based space $X$, indexed by the elements of $G$, equipped with the above $G$-action.
 If $X$ is non-equivariantly formal, then $Y$ is equivariantly formal as a $G$-space.
\end{theorem}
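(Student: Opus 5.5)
The plan is to compute the fixed point data explicitly and reduce everything to the non-equivariant minimal model of $\bigvee_{g\in G}X_g$ with its induced permutation action. For $e\neq H\leq G$, the subgroup $H$ acts freely on the index set $G$, so no summand $X_g$ is fixed setwise. Hence $Y^H$ is the base point, and
\[
\u{\s E}(Y)(G/H)\simeq \QQ \quad\text{and}\quad \uH^\ast(Y)(G/H)=\QQ \quad\text{for } H>e.
\]
Consequently both $\u{\s E}(Y)$ and $\uH^\ast(Y)$ are $\sO_G$-$\DGA$s of the type treated in \cref{prop: lower fixed DGA minimal}. Their value at $G/e$ is a $\DGA$ carrying a $G$-action by $\DGA$ automorphisms, namely the permutation of the wedge summands. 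At $G/e$, the cohomology is $\H^\ast(Y)=\bigoplus_{g}^{\QQ}\H^\ast(X)$ in positive degrees, with $G$ permuting the summands.

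First I will check that $\uH^\ast(Y)$ is already injective, so that $\sI(\uH^\ast(Y))$ may be replaced by $\uH^\ast(Y)$ itself, or at least maps to it by a quasi-isomorphism. The positive-degree part is concentrated at $G/e$, so it is of the form $\u W_e$ from \cref{defn: standard inj env in vec}. The degree-zero part is the constant system $\u\QQ$, which is $\u V_G$. Both are injective by \cref{sum inj}.

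Next, by \cref{thm: minimal model for each level} applied to the $G$-$\DGA$ $\s E(Y)$, the non-equivariant minimal model $\rho\colon M\to \s E(Y)$ carries a $G$-action making $\rho$ equivariant. Then \cref{prop: lower fixed DGA minimal} produces the equivariant minimal model $\sM_Y$, with $\sM_Y(G/e)=M$ and $\sM_Y(G/H)=\QQ$ for $H>e$.

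The key step is to produce a $G$-equivariant quasi-isomorphism $M\to \H^\ast(Y)$. Non-equivariantly, $Y$ is formal, since it is a wedge of formal spaces (\cite{FHT}). Because $\H^\ast(Y)$ is realized as an equivariant $\DGA$ with zero differential, a $G$-equivariant quasi-isomorphism should exist, for the following reason. Write $M'\to\H^\ast(Y)$ for the minimal model of $\H^\ast(Y)$ equipped with the $G$-action given by \cref{thm: minimal model for each level}. We need an equivariant isomorphism $M\cong M'$ compatible with the maps to $\s E(Y)$ and to $\H^\ast(Y)$. I expect this equivariance to be the main obstacle.

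To get around it, I would avoid abstract comparison and build the zig-zag directly from a formality zig-zag for $X$. Take $\s E(X)\xleftarrow{\sim} M_X\xrightarrow{\sim}\H^\ast(X)$. Form the connected sum over $\QQ$, $\bigoplus^{\QQ}_{g\in G}$, of $|G|$ copies of each term; these are $\DGA$s with $G$ acting by permuting the copies. The resulting maps
\[
\s E(Y)=\textstyle\bigoplus_g^{\QQ}\s E(X_g)\xleftarrow{\sim}\bigoplus_g^{\QQ}M_X\xrightarrow{\sim}\bigoplus_g^{\QQ}\H^\ast(X)=\H^\ast(Y)
\]
are $G$-equivariant quasi-isomorphisms, as in the wedge proposition above. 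Here the PL forms on a wedge are modelled by the $\QQ$-sum, via the equivariant restriction quasi-isomorphism $\s E(Y)\to\bigoplus^{\QQ}_g\s E(X_g)$, which I would insert into the zig-zag if needed.

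Extending each term by $\QQ$ at all levels $G/H$ with $H>e$, using augmentations as structure maps, gives a zig-zag of $\sO_G$-$\DGA$s connecting $\u{\s E}(Y)$ and $\uH^\ast(Y)$. Each map in it is levelwise a quasi-isomorphism. Condition (2) of \cref{prop: formal defn} then holds, so $Y$ is equivariantly formal.

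The remaining care points are twofold. First, naturality with respect to $\hat g$ must hold, which is the permutation compatibility. Second, the maps must be compatible with the augmentations to the levels $G/H$ with $H>e$; this holds because all maps are augmented.
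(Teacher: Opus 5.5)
Your argument is correct, but at the decisive step it takes a different route from the paper.

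The paper also starts from $Y^H=\ast$ for $H\neq e$ and from non-equivariant formality of $Y$. It then asserts, citing \cref{thm: minimal model for each level}, that \emph{both} maps $\s E(Y)\xleftarrow{\rho} M\xrightarrow{\rho'}\H^\ast(Y)$ can be taken $G$-equivariant. It assembles them as in \cref{prop: lower fixed DGA minimal} into $\u{\s E}(Y)\leftarrow\sM\to\sI(\uH^\ast(Y))=\uH^\ast(Y)$. This verifies condition (1) of \cref{prop: formal defn} with the equivariant minimal model in hand. However, \cref{thm: minimal model for each level} only equips $M$ with an action making \emph{one} prescribed map equivariant. Making $\rho'$ equivariant for that same action is exactly the comparison you flagged as the obstacle, and the paper offers only that citation for it.

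Your construction sidesteps the issue. You take the $|G|$-fold sum over $\QQ$ of $\s E(X)\xleftarrow{\sim}M_X\xrightarrow{\sim}\H^\ast(X)$, with $G$ permuting the copies. This makes equivariance automatic, and compatibility with the augmentations holds for degree reasons because $M_X$ is $1$-connected. Padding with $\QQ$ at the nontrivial orbits then gives a levelwise quasi-isomorphic zig-zag, so condition (2) holds directly and \cref{prop: formal defn} does the rest.

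The price is that you use the specific free-permutation structure of $Y$, and you obtain the minimal model only indirectly. The paper's outline, once the equivariance is justified, would apply to any formal $G$-space whose nontrivial fixed point sets are points.

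Two small remarks. First, on the condition (2) route the intermediate diagrams need not be injective, so your check that $\uH^\ast(Y)$ is injective and your construction of $\sM_Y$ are not needed. Second, the operation you mean is the sum (fibre product) over $\QQ$ used in the paper's wedge proposition, not a ``connected sum''.
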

\begin{proof}
Let $H\leq G$ be a nontrivial subgroup. We claim that $Y^H= \ast.$
Indeed, let $y\in Y^H$. If $y\neq \ast$, then $y$ belongs to a unique wedge summand $X_g$ for some $g\in G$. Since $y$ is fixed by every element of $H$, we have $h\cdot y=y$
for all $h\in H$. On the other hand, by definition of the action, $h\cdot y\in X_{hg}.$
Since distinct wedge summands intersect only at the wedge point and $y\neq \ast$, it follows that $X_{hg}=X_g$ for every $h\in H$. Hence $hg=g$ for every $h\in H$.

However, the action of $G$ on itself by left translation is free. Therefore 
$hg=g \implies h=e.$
It follows that every element of $H$ is the identity, contradicting the assumption that $H$ is nontrivial. Thus no non-basepoint of $Y$ can be fixed by $H(\neq e)$, and therefore $Y^H= \ast.$
On the other hand, $Y^e=Y.$

Since $X$ is non-equivariantly formal, the finite wedge
$Y=\bigvee_{g\in G}X_g$
is also non-equivariantly formal. Therefore, there exists a quasi-isomorphism
$\sE(Y)\xleftarrow{\rho} M\xrightarrow[]{\rho'} \H^\ast(Y).$
By \cref{thm: minimal model for each level}, one may take the quasi-isomorphisms $\rho$ and $\rho'$
 as $G$-maps.
For every nontrivial subgroup $H\leq G$, we have
$Y^H=\ast,$
and consequently
$\sE(Y^H)=\QQ=\H^\ast(Y^H).$
Using \cref{def: inj enlargement}, we obtain the injective envelope $\s I(\uH^\ast(Y))$  of $\uH^\ast(Y)$ is $\uH^\ast(Y)$ itself.
Therefore, it is easy to assemble together (since each $G/H$-level for $H\neq \{e\}$ is just a copy of $\QQ$) as in \cref{prop: lower fixed DGA minimal} and get quasi-isomorphisms of $\mathscr O_G$-$\DGA$s
$\u\sE(Y)\xleftarrow{\rho} \s M\xrightarrow[]{\rho'} \s I(\uH^\ast(Y)).$
where $\s M$ is a minimal system.
Therefore, we conclude that $Y$ is equivariantly formal.
\end{proof}

\subsection{Retracts.}
It is well known that a retract of a formal space is formal non-equivariantly. We show this result extends in the equivariant setting.
\begin{theorem}\label{thm: retraction of eq formal}
    Let $G$ be a finite group. Let $X$ be an equivariant formal $G$-space;
     then the equivariant homotopy retract of $X$ is equivariantly formal.
\end{theorem}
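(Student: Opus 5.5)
The idea is to build an explicit quasi-isomorphism $\sM_Y \to \sI(\uH^\ast(Y))$ by composing the formality map of $X$ with the maps induced by the retraction, and then to verify it on cohomology. By \cref{prop: formal defn}(1), this suffices.

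\textbf{Setup.} Let $Y$ be a $G$-space with $G$-maps $i\colon Y\to X$ and $r\colon X\to Y$ such that $r\circ i\simeq_G \id_Y$. Let $\rho_X\colon\sM_X\to\u{\s E}(X)$ and $\rho_Y\colon\sM_Y\to\u{\s E}(Y)$ be the minimal models. Applying the PL de Rham system gives $\u{\s E}(r)\colon\u{\s E}(Y)\to\u{\s E}(X)$. By \cref{prop: lifting property}, the composite $\u{\s E}(r)\rho_Y$ lifts through the quasi-isomorphism $\rho_X$. This gives $r^\sharp\colon\sM_Y\to\sM_X$ with $\rho_X r^\sharp\simeq \u{\s E}(r)\rho_Y$. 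Hence, after identifying $\uH^\ast(\sM_X)\cong\uH^\ast(X)$ and $\uH^\ast(\sM_Y)\cong\uH^\ast(Y)$ via $\rho_X,\rho_Y$, the map $r^\sharp$ induces $r^\ast$ on cohomology systems.

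\textbf{The candidate map.} Since $X$ is equivariantly formal, there is a quasi-isomorphism $\sigma\colon\sM_X\to\sI(\uH^\ast(X))$. Let $j_X\colon\uH^\ast(X)\to\sI(\uH^\ast(X))$ be the inclusion, which is a quasi-isomorphism by \cref{thm:fine inj extension}. In cohomology, $\sigma$ induces an isomorphism of systems of graded algebras
\[\theta\colon \uH^\ast(X)\to\uH^\ast(\sI(\uH^\ast(X)))\xleftarrow{\ \cong\ }\uH^\ast(X),\]
where the second isomorphism is $j_X^\ast$. In general $\theta$ need not be the identity. To correct this, note that $\theta^{-1}$ is a morphism of $\sO_G$-diagrams of $\DGA$s with zero differential. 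Composing it with $j_X$ and using \cref{prop: fine factor through inj} at each stage of the envelope construction, we extend it to $\sI(\theta^{-1})\colon\sI(\uH^\ast(X))\to\sI(\uH^\ast(X))$ with $\sI(\theta^{-1})\,j_X=j_X\,\theta^{-1}$. By two-out-of-three this is a quasi-isomorphism. Replacing $\sigma$ by $\sI(\theta^{-1})\sigma$, we may assume that $\sigma$ induces the identity on $\uH^\ast(X)$ under these identifications.

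Similarly, the morphism $j_Y\circ i^\ast\colon\uH^\ast(X)\to\sI(\uH^\ast(Y))$ has injective target. By \cref{prop: fine factor through inj}, it extends to $\sI(i^\ast)\colon\sI(\uH^\ast(X))\to\sI(\uH^\ast(Y))$ with $\sI(i^\ast)j_X=j_Y i^\ast$. Since $j_X$ and $j_Y$ are quasi-isomorphisms, $\sI(i^\ast)$ induces $i^\ast$ in cohomology. Now define
\[\psi:=\sI(i^\ast)\circ\sigma\circ r^\sharp\colon \sM_Y\longrightarrow \sI(\uH^\ast(Y)).\]

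\textbf{Cohomology check.} At each level $G/H$, the map $\psi$ induces $i^\ast\circ\id\circ r^\ast=(r\circ i)^\ast$ on $\H^\ast(Y^H)$. Since $r\circ i\simeq_G\id_Y$ restricts to homotopies on all fixed point sets, this composite is the identity. Hence $\psi$ is a levelwise quasi-isomorphism, and $Y$ is equivariantly formal by \cref{prop: formal defn}(1).

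\textbf{Main obstacle.} I expect the delicate step to be the bookkeeping of cohomology identifications. Specifically, one must show that the extensions provided by \cref{prop: fine factor through inj}, which are only asserted to exist and are not unique, still induce the intended maps in cohomology. This is why I phrase everything through the relations $\sI(f)\circ j=j\circ f$ together with the fact that $j$ is a quasi-isomorphism: these pin down the induced map in cohomology regardless of which extension is chosen. A second point to check is that the extension of \cref{prop: fine factor through inj} for a single $\sI_H$ iterates to the full envelope $\sI$. This should follow by applying it successively over the conjugacy classes of subgroups, as already done implicitly in the proof of \cref{prop: formal defn}.
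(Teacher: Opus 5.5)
Your proof is correct and follows the paper's argument. You use the same composite $\sM_Y\to\sM_X\to\sI(\uH^\ast(X))\to\sI(\uH^\ast(Y))$, with the formality map $\sigma$ first normalized by the envelope extension of $(\sigma^\ast)^{-1}$ so that it induces the identity in cohomology. The only difference is that you read off $i^\ast r^\ast=(r\circ i)^\ast=\id$ directly from the fixed-point-wise homotopies, whereas the paper lifts $i$ to $\tilde i\colon\sM_X\to\sM_Y$ and proves $\tilde i\circ\tilde r\simeq\id$ by lifting the homotopy; your version avoids that step at no cost.
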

\begin{proof}
    Let $r: X\ra Y$ be a homotopy retraction. Then there exists $i: Y\ra X$ such that $ri\simeq id_Y$ (equivariantly).  
    Applying the contravariant functor $\underline{\mathscr E}$, we have $\underline{\s E}(r\circ i) \simeq \id$, which means  $\underline{\s E}(i)\circ \underline{\s E}(r) \simeq \id$, Since $\underline{\sE}$ preserves homotopy (see \cite[Corollary 4.11]{TR1982}).
    
    Let $m_Z:\s M_Z \ra \underline{\sE}(Z)$ denotes the minimal model of $Z$, where $Z=X,Y$. 
    Using the lifting property given in \cref{prop: lifting property}, we obtain the following homotopy commutative diagram:

\begin{equation}\label{eq: formality of retract}
\begin{tikzcd}
	{\s M_Y} & {\s M_X} & {\s M_Y} \\
	{\underline {\mathscr E}(Y)} & {\underline {\mathscr E}(X)} & {\underline {\mathscr E}(Y)}
	\arrow["{\tilde r}", dashed, from=1-1, to=1-2]
	\arrow["{m_Y}", from=1-1, to=2-1]
	\arrow["{\tilde i}", dashed, from=1-2, to=1-3]
	\arrow["{m_X}", from=1-2, to=2-2]
	\arrow["{m_Y}", from=1-3, to=2-3]
	\arrow["{\underline {\mathscr E}(r)}", from=2-1, to=2-2]
	\arrow["{\underline {\mathscr E}(i)}", from=2-2, to=2-3]
\end{tikzcd}
\end{equation}
    where $\tilde r$ and $\tilde i$ lift $r$ and $ i,$ respectively.
    We claim that $\tilde i \circ \tilde r\simeq \id$. 
    We note that there exists a homotopy $H: \s M_Y \ra \underline {\mathscr E}(Y)\otimes \u \QQ(t,dt) $ with $$p_0 H =\underline{\s E}(i) \underline{\s E}(r) m_Y ,\qquad p_1H= \id.$$ 
    Using \cref{prop: lifting property}, we lift the homotopy with respect to $m_Y\otimes \id: \s M_Y\otimes \u \QQ(t,dt) \ra \underline{\sE}(Y)\otimes \u \QQ(t,dt) $ to obtain $\tilde H:\s M_Y\ra \s M_Y \otimes \u \QQ(t,dt).$ Since $\tilde i \tilde r $ and $p_0\tilde H$ both lift $\underline{\s E}(i) \underline{\s E}(r) m_Y$ with respect to $m_Y$, by the uniqueness of lifts (up to homotopy), we have $\tilde i \tilde r \simeq p_0\tilde H$. By the same reasoning, $\id \simeq p_1\tilde H$. Combining these two homotopies, we conclude that $\tilde i \circ \tilde r\simeq \id$.
    
    On cohomology, this becomes $\tilde i^\ast \tilde r^\ast=\id$ since homotopic maps induce equality on cohomology. To justify this, suppose  $f_0, f_1$ are maps that are homotopic via $K:\sA \ra \s B\otimes \u \QQ(t,dt)$; then $p_0K=f_0, p_1K=f_1$. Since  $\uH^\ast (\u\QQ(t,dt))=\u\QQ$, the  K\"unneth formula implies $p_0^\ast=\id= p_1^\ast.$ Therefore $$f_0^\ast= p_0^\ast K^\ast=p_1^\ast K^\ast=f_1^\ast.$$
    
   Since $X$ is an equivariantly formal space, there exists a quasi-isomorphism given below: $$\sigma: \sM_X \ra \s I(\uH^\ast(\s M_X))\cong\s I(\uH^\ast (X)).$$
   We note that $\uH^\ast(\s I(\uH^\ast (\s A))=\uH^\ast(\s A)$ for any $\s O_G$-$\DGA$ by the construction of injective enlargement as in \cref{def: inj enlargement}.
   Therefore, $\sigma$ induces $\sigma^\ast :\uH^\ast(\s M_X)\ra \uH^\ast(\s M_X)$. Taking injective envelope, we obtain a map $\s I((\sigma^\ast)^{-1}): \s I(\uH^\ast(\s M_X))\ra \s I(\uH^\ast(\s M_X))$ which extends $(\sigma^\ast)^{-1}.$ 
   Let us take $$\rho=  \s I((\sigma^\ast)^{-1})\circ \sigma:\sM_X \ra \s I(\uH^\ast(\s M_X))$$ then $\rho^\ast=\id:\uH^\ast(\s M_X)\ra \uH^\ast(\s M_X) $.
   Let $\psi$ be the composite morphism given by the maps
   $$\psi:\s M_Y \xra{\tilde r} \s M_X\xra{\rho} \s I(\uH^\ast(\s M_X)) \xra{\s I(\tilde i^\ast)}\s I(\uH^\ast(\s M_Y)).$$
   where $\s I(\tilde i^\ast)$ denotes the extension of $\tilde i$ to the injective envelopes.
   On cohomology, this becomes $$\psi^\ast= \tilde i^\ast \rho^\ast \tilde r^\ast= \tilde i^\ast \tilde r^\ast=\id.$$ Thus $\psi$ is a quasi-isomorphism, hence $Y$ is equivariantly formal.
    
\end{proof}

As a corollary, we obtain the converse to \cref{prop:product of equivariantly formal spaces is formal} and \cref{thm: formality of G wedge}.
\begin{corollary}
      Let $G$ be a finite group. Suppose $G$ acts on $X\times Y$ by the diagonal action induced from the action on individual components. If $X\times Y$ is formal, then the components $X$ and $Y$ are equivariantly formal.
\end{corollary}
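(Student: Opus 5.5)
The plan is to deduce the statement from \cref{thm: retraction of eq formal}: I will show that each factor of the $G$-space $X\times Y$ is an equivariant retract of the product. I read the hypothesis ``$X\times Y$ is formal'' as ``$X\times Y$ is equivariantly formal'' with the diagonal action. This is the reading under which the statement is the converse of \cref{prop:product of equivariantly formal spaces is formal}.

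First I will fix basepoints. The spaces are $G$-simply connected, so $X^G$ and $Y^G$ are nonempty; indeed simply connected includes path connected, hence nonempty. Choose $G$-fixed basepoints $x_0\in X^G$ and $y_0\in Y^G$. Then define
$$i_X:X\to X\times Y,\ x\mapsto (x,y_0), \qquad \pi_X:X\times Y\to X.$$
Since $y_0$ is fixed and the action on $X\times Y$ is diagonal, both maps are $G$-maps, and $\pi_X\circ i_X=\id_X$ on the nose. So $X$ is an equivariant (in particular homotopy) retract of $X\times Y$. The same construction with $x_0$ gives $i_Y$ and $\pi_Y$ for $Y$.

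Next I apply \cref{thm: retraction of eq formal} with the retraction $r=\pi_X$ and section $i=i_X$. Since $X\times Y$ is equivariantly formal, this gives that $X$ is equivariantly formal, and likewise for $Y$. For the wedge converse to \cref{thm: formality of G wedge}, I will argue the same way. The summand $X_e$ is not $G$-invariant, so it is not a $G$-retract of $\bigvee_{g} X_g$. Instead, I apply the result at the level of fixed points: each $Y^H$ with $H\neq e$ is a point, and $Y^e$ is a non-equivariant wedge. So formality of $X$ follows from \cref{rem: fixed point set of eq formal is formal} together with the classical fact that a retract of a formal space is formal.

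The main obstacle is the existence of $G$-fixed basepoints, which is the only nonformal input. It is guaranteed by the standing $G$-simply-connected hypothesis, which requires every $X^H$, including $X^G$, to be nonempty and connected. Apart from that, the key point is that the inclusion must be equivariant so that \cref{thm: retraction of eq formal} applies to genuine $G$-maps. A strict retraction is certainly a homotopy retraction, so no further homotopy bookkeeping is needed.
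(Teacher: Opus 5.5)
Your proof is correct and follows the paper's argument: the paper likewise observes that $\pi_j\circ i_j=\id$ exhibits $X$ and $Y$ as equivariant retracts of $X\times Y$, and then applies \cref{thm: retraction of eq formal}, implicitly reading the hypothesis as equivariant formality of $X\times Y$. Your explicit choice of $G$-fixed basepoints $x_0\in X^G$ and $y_0\in Y^G$ (available by the standing $G$-simple-connectivity assumption) to make the inclusions $G$-maps is a detail the paper leaves implicit, and your paragraph on the wedge is not needed for this statement.
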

\begin{proof}
    We have two $G$-maps. The maps $\pi_1:X \times Y\to X$ and $\pi_2:X \times Y\to Y$ are the projection on the components, and $i_1:X_1\ra X\times Y$ and $i_2:Y\ra X\times Y$ are the inclusions. The maps satisfy the relation $\pi_ji_j=\id$; thus, $X, Y$ are equivariant retracts of $X\times Y$. Therefore, by \cref{thm: retraction of eq formal}, $X$ and $Y$ are equivariantly formal. 
\end{proof}

\begin{corollary}
    Let $G$ be a finite group and let
$Y=\bigvee_{g\in G} X_g$
be the wedge equipped with a $G$-action on $Y$ by 
$h\cdot X_g=X_{hg},$
 for $h,g\in G.$ If $Y$ is equivariantly formal as a $G$-space, then $X$ is non-equivariantly formal.
\end{corollary}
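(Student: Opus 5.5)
The approach is to reduce to the non-equivariant setting by passing to the underlying space. By \cref{cor: restriction preserves formality} applied with $K=e$, equivariant formality of $Y$ as a $G$-space implies that $Y$, regarded as a space with trivial group action, is formal in the usual sense. Equivalently, \cref{rem: fixed point set of eq formal is formal} applied with $H=e$ gives directly that $Y^e=Y$ is non-equivariantly formal. Either way, the problem becomes purely non-equivariant: show that a wedge summand of a formal space is formal.

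Fix the summand $X_e\cong X$ indexed by the identity element. The wedge inclusion $i:X\to Y$ and the collapse map $r:Y\to X$, which sends every other summand $X_g$ ($g\ne e$) to the basepoint, satisfy $r\circ i=\id_X$. So $X$ is a retract of $Y$ as ordinary spaces. Regard both as spaces with trivial $G$-action, or simply take the trivial group. Then \cref{thm: retraction of eq formal} applies: a retract of a formal space is formal. For trivial actions, equivariant formality coincides with ordinary formality by \cref{rem: fixed point set of eq formal is formal}, so $X$ is non-equivariantly formal.

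The only point needing care is the first step. We need formality of the underlying space $Y$ in the usual sense, rather than some statement involving the whole orbit diagram. This is immediate from the level-wise nature of quasi-isomorphisms: the zig-zag $\uH^\ast(Y)\leftarrow\cdots\rightarrow\u{\s E}_Y$ evaluated at $G/e$ gives a zig-zag connecting $\H^\ast(Y)$ and $\s E(Y)$.

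We note that $r$ is not $G$-equivariant, which is why we first forget the action instead of applying \cref{thm: retraction of eq formal} in $G$-spaces directly. Everything else is routine.
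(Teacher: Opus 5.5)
Your proposal is correct and follows essentially the same route as the paper: restrict to the trivial subgroup to conclude that $Y$ is non-equivariantly formal, then apply \cref{thm: retraction of eq formal} non-equivariantly. The only cosmetic difference is the choice of retraction: the paper uses the fold map $\bigvee_{g\in G} X_g\to X$, whereas you collapse every summand other than $X_e$; either map is a retraction of the summand inclusion, so the argument goes through unchanged.
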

\begin{proof}
    By \cref{cor: restriction preserves formality}, $Y$ is non-equivariantly formal. Now, the following composition: $$X \to \bigvee_{g\in G}  X_g\xra{\mathrm{fold}}X$$ is the identity; that is, the folding map is a retraction. Therefore, using \cref{thm: retraction of eq formal}, X is formal.  
\end{proof}

\subsection{Connected sums.}
The connected sum of simply connected compact manifolds is formal (see \cite[Theorem 3.13]{FOT}). We now see an involution on a connected sum which does not yield an equivariantly formal space. The example below contrasts with \cref{thm: formality of G wedge}, where the involution obtained by permuting wedge summands gives an equivariantly formal space. 

\begin{example}\label{ex: join is not eqformal}
Let $X=\CC P^2 \# \CC P^2$ be a $\C_2$-space, where the generator $\xi$ acts on the space by swapping the two copies. First, we note that $X^{\C_2}=S^3$ and the equivariant cohomology is given by $$\uH^\ast (X)= \begin{cases}
\QQ(x, y)/(xy,x^2-y^2) , &\text{at}\ \C_2/e\\
\QQ(z), &\text{at}\ \C_2/\C_2
\end{cases}$$
where $|x|=|y|=2, |z|=3.$ The structure map induces $\uH^\ast(X) (\hat e_{e,\C_2})=0$ for degree reasons, and the action of the generator at the $\C_2/e$ level is given by $\xi x=y$.

Since $X$ is non-equivariantly formal, its non-equivariant minimal model can be obtained by taking the minimal model of $\H^\ast(X).$
The minimal model of $X$ equipped with a $\C_2$-action, as constructed in \cref{thm: minimal model for each level}, is given by $M=\QQ(x,y,u,v)$ with $ du=x^2-y^2, dv=xy$, the actions are $$\xi x=y,\quad \xi y=x,\quad \xi u=-u,\quad \xi v=v.$$ We note that $N:= \QQ(z), dz=0$ is the minimal model for $S^3.$

Let us write $X=X_1\#X_2.$
 We note that the projective space is obtained by attaching $4$-cells to $S^2$ via the Hopf fibration $h: S^3\to S^2$; that is, $X_i=\CC P^2=S^2\cup_h e^4$.
The inclusion $X^{\C_2}\ra X$ factors through $X_1\backslash {\mathring e}^4$, as given by the diagram
$$\begin{tikzcd}
	{S^3=X^{\C_2}} && X \\
	& {S^2\simeq X_1\backslash {\mathring e}^4}
	\arrow["i", from=1-1, to=1-3]
	\arrow["h"', from=1-1, to=2-2]
	\arrow["j"', from=2-2, to=1-3]
\end{tikzcd}$$
Let the minimal model of $S^2$ be $P=\QQ(a,b)$ with $|a|=2$ and $db=a^2.$ Then $h$ and $j$ induce maps on the minimal models, $ \tilde h: P\ra N$ and $\tilde j:M\ra P$, given by
$$\tilde h(a)=0,\quad \tilde h(b)=z.
$$
$$\tilde j(x)=a ,\quad \tilde j(y)=0 ,\quad \tilde j(u)=b ,\quad \tilde j(v)=0 $$
To justify this, we note that the Hopf map $h:S^3\to S^2$ induces an isomorphism on the third homotopy groups. Moreover, the vector space of generators of a minimal model of a space $X$ in degree $n$ is naturally isomorphic to the dual space of the rational homotopy group $\hom(\pi_n(X) \otimes \mathbb{Q}, \mathbb{Q})$, (see \cite[Theorem 15.11]{FHT}). In particular, this ensures $\tilde h$ maps the degree $3$-generator $b$ to $z$. For $\tilde j$, a similar argument applies once we note that the $2$-skeleton of $X$ is $S^2 \vee S^2$, and the map $j$ can be identified with the inclusion of $S^2$ as the first summand of the wedge.

Hence, the composite $\tilde i: M\ra N$ is given by $$\tilde i(x)=\tilde i(y)=\tilde i(v)=0,\quad \tilde i(u)=z.$$ Here, we note that the topological relation $j\circ h\simeq i$ implies that the induced maps on the minimal models are homotopic, namely $\tilde h\circ\tilde j\simeq \tilde i$. However, algebraic rigidity forces $\tilde h \circ \tilde j$ to be the unique map in its homotopy class. This justifies the strict lift $\tilde i$ of $i$ on the minimal models as the composition $\tilde h\circ\tilde j$.

Let $\s B$ be an $\s O_G$-$\DGA$ defined by $\s B(\C_2/e)=M$ and $\s B(\C_2/\C_2)= N$, with the structure map $\s B(\hat e_{e,\C_2})=\tilde i.$
Putting together, the minimal model of the layers $\s E(X^H)$ for $H=e, \C_2$ produce a quasi-isomorphism $\s B\xra{\sim} \u{\s E}(X).$

The $\s O_G$-$\DGA$ $\s B$ is not injective in $\vc^\ast;$ we modify this by introducing extra generators at the $\C_2/e$-level to obtain the equivariant minimal model $\s M$ of $X$,
$$\s M=\begin{cases}
\QQ(x,y,u,v,z,z'), &\text{at}\ \C_2/e\\
\QQ(z), &\text{at}\ \C_2/\C_2
\end{cases}$$
with the differentials at $\C_2/e$ given by $ du=x^2-y^2, dv=xy, dz=z'$, and at $\C_2/\C_2$ by $dz=0.$ The degrees and actions are defined as before.
We claim that the space $X$ is not equivariantly formal; that is, there is no quasi-isomorphism $\s M\ra \s I(\uH^\ast (X)).$
By \cref{def: inj enlargement}, the injective envelope is given by $$\s I(\uH^\ast (X))=\begin{cases}
\QQ(x, y)/(xy,x^2-y^2)\otimes \QQ(z,sz) , &\text{at}\ \C_2/e\\
\QQ(z), &\text{at}\ \C_2/\C_2
\end{cases}$$
where $|z|=3$, and $dz=sz.$
For the sake of contradiction, assume there exists a quasi-isomorphism $\rho: \s M\ra \s I(\uH^\ast (X)).$
At the $\C_2/e$-level, the quasi-isomorphism forces $\rho(x)=ax+by$ and $\rho(y)=cx+dy$ for some basis $\{(a,b),(c,d)\}$ of $\QQ^2$. Since $du=x^2-y^2$ in $\sM(\C_2/e)$, this implies $\rho(\C_2/e)(du)=\rho(\C_2/e)(x^2-y^2)$ a polynomial in $x$ and $y$. On the other hand, degree considerations imply that $\rho(\C_2/e)(u)=kz$ for some $k\in \QQ,$ so $\rho(\C_2/e)(du)=ksz$. So the only solution to this is $k=0$, meaning, $\rho(\C_2/e)(u)=0.$
However, $$\rho(\C_2/\C_2)\circ \s M(\hat e_{e, \C_2})(u)=\rho(\C_2/\C_2)(z)=z.$$ On the other hand $\s I(\uH^\ast(X))(\hat e_{e,\C_2})\circ \rho(\C_2/e)(u)=\s I(\uH^\ast(X))(\hat e_{e,\C_2})(0)=0.$ This implies $$\rho(\C_2/\C_2)\circ \s M(\hat e_{e, \C_2})\neq\s I(\uH^\ast(X))(\hat e_{e,\C_2})\circ \rho(\C_2/e)$$
which contradicts the naturality of $\rho.$ Thus,  no such quasi-isomorphism exists. Hence $X$ is not equivariantly formal.
\end{example}

\iffalse
\begin{remark}
Halperin and Stasheff's result \cite[Corollary 5.16]{HalpStasheff1979} shows that if a cohomology algebra $A$ satisfies $A^i=0$ for $1\leq i\leq d$ and for $i\geq 3d+1$, then $A$ is intrinsically formal.
In \cref{ex: join is not eqformal}, the cohomology algebra satisfies the algebraic restriction at each $G/H$-level, but the corresponding space is not equivariantly formal.   Therefore, the cohomology algebra is not equivariantly intrinsically formal.
\end{remark}
\fi
This example also illustrates that some of the significant theorems on formality fail to have obvious generalizations in the equivariant setting.  
\begin{remark}
In the non-equivariant world, there are dimensional connectivity results that ensure the formality of spaces.
It is known that $n$-connected CW-complex of  dimension $\leq 3n +1$ is formal for $n\geq 1$ (see \cite[Corollary 2.4]{ShigaYamguchi},\cite[Corollary 5.16]{HalpStasheff1979}). 
In \cref{ex: join is not eqformal}, the fixed point spaces $X^H$ satisfy these condition as
$\dim(X)=4\leq3\cdot 1+1$ and $\dim(X^{\C_2})=3\leq 3\cdot 2+1$.
However, the space $X$ is not equivariantly formal.
\end{remark}
\begin{remark}
 Miller \cite{miller1979formality} showed that an $n$-connected compact manifold with dimension $\leq 4n +2$ is formal for $n\geq 1$. Again, \cref{ex: join is not eqformal} shows that this theorem does not generalize to the equivariant setting.
\end{remark}

\section{Intrinsic formality in the case of \texorpdfstring{$\C_{p^k}$}{primary cyclic group}}\label{sec: Intrinsic formality}
 Recall that a graded algebra $A^{\ast}$, is said to be $k$-intrinsically formal, if for any minimal algebra $M$ with $H^{\ast}(M)\cong A^{\ast}$ the sub $\DGA$ $M_k$ of $M$ generated by elements degree $\leq k$, is unique up to isomorphism. If $M_k$ is unique for every $k$, the graded algebra $A^{\ast}$ becomes intrinsically formal, and any space with cohomology algebra $A^{\ast}$ will be formal. The equivariant analogue was defined in \cite{santhanam2025equivariant}.

\begin{definition}[Equivariantly intrinsically formal]\label{def:I.F}
A diagram of graded algebras $\sA$ over $\sO_G$, for some finite group $G$ is called \emph{equivariantly $k$-intrinsically formal} (abbreviated to equivariantly $k$-I.F.) if for any two minimal systems $\sM$ and $\sM'$ with $H^\ast(\sM)=\sA=H^{\ast}(\sM')$, the sub-systems $\sM_k$ and $\sM'_k$ have the property that, there is a morphism from $\sM_k \to \sM'_k$ or $\sM_k'\to \sM_k$ which is a $k$-isomorphism.
 For any two such minimal systems $\sM$ and $\sM'$ if there is a morphism $\phi:\sM \to \sM'$ which is a quasi-isomorphism, we say $\sA$ is \emph{equivariantly intrinsically formal} (abbreviated to equivariantly I.F.).
 \end{definition}
Note that if $\uH^\ast(X)$ is equivariantly I.F., there exists a unique minimal system $\s M$ whose cohomology agrees with $\uH ^\ast(X)$, then it is forced that $\s M$ is the minimal model of $\uH ^\ast(X)$.
Hence if $\uH^\ast(X)$ is equivariantly I.F., $X$ is equivariantly formal.

We now describe a criterion for deciding whether a split-$\s O_{\C_{p^k}}$-$\GA$ admits multiple equivariant rational homotopy types. That is, we determine whether there exist two $\C_{p^k}$-spaces with the same cohomology diagram but different equivariant rational homotopy types. Here, we modify the techniques developed in \cite{ShigaYamguchi,santhanam2025equivariant} to get a better understanding of plural homotopy types.

\begin{cons}\label{cons: for plural homotopy types}
    Let $\s A$ be a split-$\s O_{G}$-$\GA$ over $\s O_{G}$ where $G={\C_{p^k}}$.  
    We denote  $\s A(n)$ to be the subalgebra of $\s A $ multiplicatively generated by elements up to degree $n$.

We declare a minimal systems \textbf{$\s M_n$ 
satisfies the properties $\mathbf{(1)_n, (2)_n}$} if the following conditions holds:\\
   $\mathbf{(1)_{n}}$ each ${\C_{p^k}}/{\C_{p^i}}$-orbits $\sM_n({\C_{p^k}}/{\C_{p^i}})$ are generated by elements of degree $\leq n.$\\
    $\mathbf{(2)_{n}}$ There is a morphism of diagram of GAs $$\sigma_{n}: \uH^\ast(\sM_{n})(n+1)\ra \s A$$ where $\sigma_{n}^i$ is isomorphic for degree $i\leq n$ and monomorphic for $i=n+1$ and $\ker(\sigma_{n})^{n+2}$ as an element of $\vc^\ast$ is injective.

We note that, since $\s A$ is a split-$\s O_{G}$-$\GA$, it is system of $\DGA$s with zero diffrential. Consequently, the cohomology of $\s A$ is naturally isomorphic to $\s A$ itself.
Note  that the $n$-th stage of the minimal model of the split-$\s O_{\C_{p^k}}$-$\GA$ $\s A$ satisfies $\mathbf{(1)_{n}}, \mathbf{(2)_{n}}$ and therefore such a system always exists.

Let  $\s M_{n-1}$ be a minimal system satisfying $\mathbf{(1)_{n-1}}, \mathbf{(2)_{n-1}.}$ We now construct an elementary extension  of $\sM_{n-1}$ denoted by the miminal system  $\s M_n$ containing $\s M_{n-1}$ and satisfying the conditions $\mathbf{(1)_{n}}, \mathbf{(2)_{n}.}$

\textbf{Step 1.} 
We denote $\uV^n:=\ker(\sigma_{n-1})^{n+1}$ which is injective as an element of $\vc ^\ast$ by hypothesis. 
Let $\s M_D$ be the elementary extension $\s M_{n-1}(\u V^n)$.
We have the induced morphism $\sigma_D :\uH^\ast(\s M_{D})(n) \ra \s A.$ 

\textbf{Step 2.}
Let $\alpha, \beta, \gamma, \delta$ denote the $(k+1)$-tuples (for example $\alpha=(\alpha_0, \dots \alpha_k))$ obtained from the dimensions of the following diagrams of $\GA$s evaluated at each orbit:
\begin{align*}
    &\alpha_i=\dim_{\QQ} \sA^{n+1}({\C_{p^k}}/{\C_{p^i}}),  \hspace{3.45em} \gamma_i=\dim_{\QQ} (\sA^{n+1}({\C_{p^k}}/{\C_{p^i}})/\sA(n)^{n+1})({\C_{p^k}}/{\C_{p^i}})
    ,\\
    &\beta_i=\dim_{\QQ} \uH^{n+1}\s (\sM_D)({\C_{p^k}}/{\C_{p^i}})
   ,\quad \delta_i=\dim_{\QQ} (\uH^{n+1}\s (\s M_D)({\C_{p^k}}/{\C_{p^i}})/(\uH^\ast(\s M_{D})(n))^{n+1}({\C_{p^k}}/{\C_{p^i}}).
\end{align*}
Clearly, $\sA^{n+1}/\sA(n)^{n+1}$ is the subspace of $n+1$ degree elements of $\sA/\sA(n)$.
Here, the quotients are the cokernels of the respective inclusions, i.e., $\sA(n)\to \sA$ and $\uH^\ast(\s M_{D})(n)\to \uH^\ast(\sM_D)$.
Note that we have the level-wise difference $\alpha_i -\gamma_i=\beta_i-\delta_i$ 
since the respective decomposable parts up to degree $n+1$ (such decomposable elements are a sum of products of elements up to degree $(n-1)$) are isomorphic, as $\sigma_{n-1}^i$ is isomorphic for degree $i\leq n-1$. 
Let $\kappa$ be a $(k+1)$-tuple such that $$\max\{0, \delta-\gamma\}\leq \kappa \leq \delta.$$
Let there  exist $\u W\subseteq \uH^\ast(\s M_D)$, an injective $\vc^\ast$  with $\dim_{\QQ} \u W=\kappa$ such that 
\begin{equation}\label{eq: transversility condition}
    \u W \cap (\uH^\ast(\s M_{D})(n))^{n+1}=0 ~~\text{(level wise)}. 
\end{equation}

Let $\s M_{\u W}$ denotes the minimal system $\s M_D(\u W')$, where $\u W'$ is an isomorphic copy of $\u W$ is added in degree $n$, by  setting $d(\u W')=\u W.$
Since $\uH^\ast(\s M_{\u W})  (n)=\uH^\ast(\s M_{D})(n)$, we have the induced morphism $\sigma_D:\uH^\ast(\s M_{\u W})(n) \ra \s A$. Also we  have $$\uH^{n+1}(\s M_{\u W})\oplus\u W=\uH^{n+1}(\s M_D)$$  so that
$$\dim_{\QQ} (\uH^{n+1}\s (\s M_{\u W})/(\uH^\ast(\s M_{\u W})(n))^{n+1}=\delta_i-\kappa_i\leq \gamma_i=\dim_{\QQ} (\sA^{n+1}/\sA(n))^{n+1} $$ at each orbit level $(C_{p^k}/C_{p^i})$. This is  because \eqref{eq: transversility condition} implies only non-decomposable elements of $\uH^{n+1}\s (\s M_D)/(\uH^\ast(\s M_{D})(n))^{n+1}$ are killed by adding $\u W'$.

\textbf{Step 3.}
Let $\u C=\coker((\sigma_D)^n:(\uH^\ast(\s M_{\u W})(n))^n \ra \s A^n)$. Here, we note that by \cref{cpk resolution length 1} $\coker((\sigma_D)^n$ is injective in $\vc^\ast$ as $\s A^n$ is injective in $\vc^\ast$.
Let $\s M_{\u W, n}=\s M_{\u W}(\u C)$ with $d(\u C)=0.$ Then, we have a $\DGA$
morphism $\sigma_n:\uH^\ast(\s M_{\u W, n})(n)\ra \s A$ which is isomorphic for degree $i\leq n.$ 

\textbf{Step 4.}
We say that the system of rational vector spaces $\underline{W}$ satisfies $\mathbf{(C_{n-1})}$ if the following holds.\\
\textbf{Condition $\mathbf{(C_{n-1})}: $} Given  a monomorphism of rational coefficient systems $$\psi:\uH^{n+1}(\sM_{\u W})/\uH^\ast(\s M_{\u W})(n))^{n+1}\ra \s A^{n+1}/\s A(n)^{n+1},$$
 the morphism $\sigma_n\oplus\psi$ can be extended to a morphism of a diagram of GAs
$\sigma_{\u W,n}:\uH^\ast(\s M_{\u W, n})(n+1)\ra \s A$
with $\ker \sigma_{\u W,n}^{n+2}$ injective in $\vc^\ast$.

We note that if $\underline{W}$ satisfies  $\mathbf{(C_{n-1})} $ then the pair $(\s M_{\u W, n}, \sigma_{\u W,n})$ satisfies the condition $\mathbf{(1)_n}$ and $\mathbf{(2)_n}$.

\textbf{Step 5.}
However, two $\s M_{\u W, n}$ obtained through this construction may not represent different equivariant rational homotopy types. Now we deal with this problem.
Let us define $$\mathbb G(\beta,\kappa)(\QQ)=\{\u W \mid  \u W \subseteq H^\ast (\s M_D)(n)^{n+1} \text{ with }  \dim_{\QQ} \u W= \kappa\},$$ 
and  $$\mathbb M_\kappa:=\{\u W\in\mathbb G(\beta,\kappa)(\QQ) \mid \u W \text{ is injective that satisfies } \u W \cap (\uH^\ast(\s M_{D})(n))^{n+1}=0 \}.$$
Let $\mathbb O_\kappa \subseteq \mathbb M_\kappa$ be defined as
$$\mathbb O_\kappa:=\{\u W\in \mathbb M_\kappa \mid \u W \text{ satisfies condition } \mathbf{(C_{n-1})}   \}.$$
Let $\mathscr G=\Aut (\cM_D)$ be the group of automorphism  
then $\mathscr G$ acts on $\mathbb G(\beta,\kappa)(\QQ)$ canonically. We will show that two choices of $\s M_{\u W, n}$ obtained through the construction are equivalent if and only if they lie in the same $\mathscr G$-orbit.
\begin{itemize}
    \item 
Let $\Phi $ be an element of $\mathscr G$.
Since an automorphism preserves injectivity, and decomposable and indecomposable elements are mapped to decomposable and indecomposable elements respectively, so the condition \eqref{eq: transversility condition} is satisfied by $\Phi^\ast(\u W)$ (here $\Phi^\ast$ is the induced morphism in cohomology) if it is satisfied by $\u W$. Therefore, the action of $\mathscr G$ restricts to $\mathbb M_\kappa$. Also, if $\u W$ satisfies $\mathbf{(C_{n-1})}$ that is there exists an extension of the morphism of diagram of GAs $\sigma_n\oplus \psi$, then we have an extension of morphism of diagram of GAs $\Phi^\ast(\sigma_n) \oplus \Phi^\ast(\psi)$, so $\Phi^\ast(\u W)$ satisfies $\mathbf{(C_{n-1})}$, implying, the action of $\mathscr G$ restricts to $\mathbb O_\kappa$.

Moreover, for $\u W \in \mathbb O_\kappa$
the automorphism $\Phi$ extends uniquely to an $\s O_G$-$\DGA$ isomorphism $\hat \Phi: \c M_{\u W,n}\ra \c M_{\Phi^\ast(\u W), n}.$ Therefore, minimal systems corresponding to points in the same $\mathscr G$ orbits in $\mathbb O_\kappa$ are unique up to isomorphism.

\item Conversely, assume $\c M_{\u W_1,n}$ and $\c M_{\u W_2, n}$ are isomorphic minimal systems given by an isomorphism $\Psi :\c M_{\u W_1,n} \ra \c M_{\u W_2, n}$. As the construction of both of $\c M_{\u W_1,n}$ and $\c M_{\u W_2, n}$ must factor through $\c M_D$, so $\Psi|_{\c M_D}$ is an element of $\mathscr G$.

We also note that $\u W_i=\ker(j^\ast:\uH^{n+1}(\c M_D) \ra \uH^{n+1}(\c M_{\u W_i,n}))$ for $i=1,2$. Therefore, the following commutative diagram:
$$\begin{tikzcd} 
	0 & {\u W_1} & {\uH^{n+1}(\c M_D)} & {\uH^{n+1}(\c M_{\u W_1,n})} \\
	0 & {\u W_2} & {\uH^{n+1}(\c M_D)} & {\uH^{n+1}(\c M_{\u W_2,n})}
	\arrow[from=1-1, to=1-2]
	\arrow[from=1-2, to=1-3]
	\arrow[from=1-2, to=2-2]
	\arrow["{j^\ast}", from=1-3, to=1-4]
	\arrow[from=1-3, to=2-3]
	\arrow["{{\Psi^\ast}}"', from=1-4, to=2-4]
	\arrow[from=2-1, to=2-2]
	\arrow[from=2-2, to=2-3]
	\arrow["{j^\ast}", from=2-3, to=2-4]
\end{tikzcd}$$
of exact sequences
where the first two vertical maps are restriction maps of $\Psi^\ast$ and the first horizontal maps are inclusions of the kernel.  As the last two vertical maps are isomorphisms, so $\Psi^\ast:\u W_1 \ra \u W_2$ is also an isomorphism by diagram chasing.
\end{itemize}
\end{cons}

Therefore, we have the following.
\begin{theorem}\label{thm: plural homotopy types in C}
Let $\s A$ be a  split-$\s O_{\C_{p^k}}$-$\GA$, where $p$ is a prime number.
    The set of isomorphism classes of equivariant minimal systems $\sM_n$  containing $\sM_{n-1}$ and satisfying $\mathbf{(1)_n, (2)_n}$ obtained through the \cref{cons: for plural homotopy types} corresponds bijectively the disjoint union of orbit spaces $$\mathbb X_n=\bigsqcup_{\kappa=\max\{0, \delta-\gamma\}}^\delta \mathbb O_\kappa/\s G.$$
\end{theorem}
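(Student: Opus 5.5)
The plan is to define a map from the disjoint union $\mathbb X_n$ to the set of isomorphism classes of minimal systems $\sM_n$ produced by \cref{cons: for plural homotopy types}, namely $[\u W]\mapsto[\s M_{\u W,n}]$, and to check that it is well defined, surjective and injective. The argument is essentially a bookkeeping of Steps 1--5 of the construction.

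\textbf{Surjectivity.} Every $\sM_n$ produced by the construction arises from some choice of $\kappa$ with $\max\{0,\delta-\gamma\}\leq\kappa\leq\delta$ and some injective $\u W$ satisfying \eqref{eq: transversility condition} and condition $\mathbf{(C_{n-1})}$, that is, some $\u W\in\mathbb O_\kappa$. It therefore lies in the image by definition. We also use the remark in Step 4 that any such $\u W$ yields a pair $(\s M_{\u W,n},\sigma_{\u W,n})$ satisfying $\mathbf{(1)_n},\mathbf{(2)_n}$. So the image is exactly the set of classes in the statement.

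\textbf{Well-definedness.} Let $\Phi\in\s G=\Aut(\s M_D)$. The first bullet of Step 5 shows that $\Phi^\ast$ preserves $\mathbb M_\kappa$ and $\mathbb O_\kappa$. The point to verify is that $\Phi$ extends to an isomorphism $\hat\Phi:\s M_{\u W,n}\to\s M_{\Phi^\ast(\u W),n}$, and this is done in two stages.
\begin{enumerate}
\item Since $\s M_{\u W}=\s M_D(\u W')$ with $d\u W'=\u W$, choose an injective (hence, by \cref{thm: injectivity in Z_p^n}, surjectively structured) lift of cocycle representatives. Define $\hat\Phi$ on $\u W'$ by sending it to the copy of $\Phi^\ast\u W$. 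Injectivity of $\u W$ in $\vc^\ast$ is what allows the choice of representatives to be made compatibly with the structure maps.
\item On the generators $\u C$ with $d\u C=0$, note that $\u C$ is the cokernel of $\sigma_D^n$. Use the induced identification of cokernels, again invoking injectivity via \cref{cpk resolution length 1} to split $\s A^n\to\u C$ naturally.
\end{enumerate}
Because $\kappa=\dim\u W$ is an isomorphism invariant, the pieces for different $\kappa$ do not interact. This justifies the disjoint union.

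\textbf{Injectivity.} Suppose $\Psi:\s M_{\u W_1,n}\to\s M_{\u W_2,n}$ is an isomorphism. As in the second bullet of Step 5, $\Psi$ preserves generators of degree $<n$ and the extension of Step 1. Hence $\Psi|_{\s M_D}$ maps into $\s M_D$, and by minimality and degree reasons it is an automorphism in $\s G$. Each $\u W_i$ is the kernel of $j^\ast:\uH^{n+1}(\s M_D)\to\uH^{n+1}(\s M_{\u W_i,n})$. The ladder of exact sequences in Step 5 then gives $\Psi^\ast(\u W_1)=\u W_2$, so the two points lie in the same $\s G$-orbit of the same $\mathbb O_\kappa$. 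The dimension tuple matches since $\Psi^\ast$ is levelwise an isomorphism.

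\textbf{Main obstacle.} The step I expect to be hardest is the assertion that $\Psi$ restricts to $\s M_D$. The generators $\u W'$ and $\u C$ both live in degree $n$, alongside the degree-$n$ generators $\u V^n$ of $\s M_D$, so $\Psi$ could a priori mix them. I would first try to characterize $\s M_D$ intrinsically: its degree-$n$ generators should be exactly those whose differential is nonzero in cohomology of $\s M_{n-1}$, modulo the indecomposables killing $\u W$. If this fails, the fallback is to compose $\Psi$ with an automorphism of $\s M_{\u W_2,n}$ fixing $\s M_{n-1}$ that corrects the $\u C$- and $\u W'$-components, using that $d\u C=0$ and $\u W'\mapsto\u W$. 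This correction should be possible levelwise and, by injectivity, compatibly along the linear orbit category of $\C_{p^k}$.
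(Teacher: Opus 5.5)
This is essentially the paper's own argument (the two bullets of Step~5 of the construction: extend $\Phi$ to $\hat\Phi$, and conversely restrict $\Psi$ to $\s M_D$ and compare $\u W_i=\ker j^\ast$ through the ladder), and the step you flag is exactly the one the paper merely asserts. That step, $\Psi(\s M_D)\subseteq\s M_D$, can genuinely fail: for a natural $\lambda:\uV^n\to\u C$, the map $v\mapsto v+\lambda(v)$, identity elsewhere, is an automorphism of $\s M_{\u W,n}$ that does not preserve $\s M_D$.

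Your fallback does repair it, with one adjustment. The $\u W_2'$-components of $\Psi(\uV^n)$ cannot be removed by an automorphism fixing $\s M_{n-1}$. They vanish instead, since $[d\Psi(v)]=\Psi^\ast[dv]$ is decomposable in $\uH^{n+1}(\s M_{n-1})$, while $\u W_2$ meets the decomposables of $\uH^{n+1}(\s M_D)$ trivially by \eqref{eq: transversility condition}. Then compose with the map $\s M_{\u W_2}\otimes\QQ(\u C_2)\to\s M_{\u W_2}$ killing $\u C_2$. This gives some $\Phi\in\s G$ that agrees with $\Psi$ on $\s M_{n-1}^{n+1}=\s M_D^{n+1}$, so the ladder yields $\Phi^\ast(\u W_1)=\u W_2$.
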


\begin{example}
Consider the split-$\s O_{\C_{p^2}}$-$\GA$s
$$\sA=
\begin{cases}
\QQ(x_3,y_7,z_{22})/(xy,xz,yz,z^2), & \text{at } \C_{p^2}/e,\\
\QQ(x_3), & \text{at } \C_{p^2}/\C_p,\\
\QQ(x_3), & \text{at } \C_{p^2}/\C_{p^2},
\end{cases}$$
where the degrees of the generators are indicated by the subscripts. The structure maps are given by
$$\sA(\hat e_{e,\C_p})(x_3)=x_3,\qquad
\sA(\hat e_{e,\C_p})(y_7)=
\sA(\hat e_{e,\C_p})(z_{22})=0,$$
and
$\sA(\hat e_{\C_p,\C_{p^2}})(x_3)=x_3.$

The $20$th stage of the minimal model is
$$\sM_{20}(G/H)=
\begin{cases}
\QQ(x,y,\theta_9,\theta_{11},\theta_{13},\theta_{15},
\theta_{15}',\theta_{17},\theta_{17}',
\theta_{19},\theta_{19}'),
& \text{at } \C_{p^2}/e,\\
\QQ(x_3), & \text{at } \C_{p^2}/\C_p,\\
\QQ(x_3), & \text{at } \C_{p^2}/\C_{p^2},
\end{cases}$$
with differential
\begin{align*}
    &dx=dy=0, d\theta_9=xy,d\theta_{11}=x\theta_9,d\theta_{13}=x\theta_{11},d\theta_{15}=y\theta_9,d\theta_{15}'=x\theta_{13},\\
    &d\theta_{17}=x\theta_{15}+y\theta_{11},d\theta_{17}'=x\theta_{15}',d\theta_{19}= x\theta_{17}+y\theta_{13},d\theta_{19}'=x\theta_{17}'.
\end{align*}

Since $\sM_{20}(G/H)$ is already the non-equivariant minimal model of
$\sA(G/H)$ for every $H\neq e$, it suffices to focus on  the
$\C_{p^2}/e$ level in order to examine if the next step admits plural homotopy type at $\C_{p^2}/e$ level.
As shown in \cite[Example~1]{ShigaYamguchi},
there exist at least two distinct choices for extending $\sM_{20}$ to
$\sM_{21}$. These extensions yield non-isomorphic minimal systems.
Consequently, $\sA$ admits plural homotopy types.
\end{example}
Now, let us consider the special case where a system of $\GA$s $\s A$ satisfies the condition $\s A(G/H)=0$ for all $H>e$. In this case, we note that every minimal system with the cohomology $\s A$ must be obtained by the \cref{cons: for plural homotopy types}.
Since such a minimal system $\s M$ satisfies $\s M(G/H)=\QQ$ for all $ H> e$, the coefficient systems $\u V, \u W,$ and $ \u C$ appearing in the construction are zero except at $G/e$-level. 
In particular, we can construct $\u W$ by taking a $\QQ(G)$-module $W$ at $G/e$-level and $0$ at other orbit levels.
Since the structure maps of these coefficient systems are surjective, they are automatically injective in $\vc^\ast.$ Consequently, 
\textbf{Step 5} of the construction also simplifies as follows:
 $$ G(\beta_0,\kappa_0)(\QQ):=\{ W \mid   W \subseteq H^\ast (\s M_D(G/e))(n)^{n+1} \text{ with }  \dim_{\QQ}  W= \kappa_0\},$$ 
and  $$ M_{\kappa_0}:=\{ W\in G(\beta_0,\kappa_0)(\QQ) \mid   W \cap (\uH^\ast(\s M_{D}(G/e))(n))^{n+1}=0 \}.$$

$$ O_{\kappa_0}:=\{ W\in  M_{\kappa_0} \mid \u W \text{ satisfies condition } \mathbf{(C_{n-1})}   \}.$$
In this case, the automorphism group $\tilde G:=\Aut (\cM_D(G/e))\cong\Aut (\cM_D)$ acts on $O_{\kappa_0}$. As a result, we obtain a stronger characterization for this case.
\begin{theorem}\label{thm: specific plural homotopy types in C}
Let $\s A$ be a  sytem of $\GA$s over $\s O_{\C_{p^k}}$ such that $\s A(\C_{p^k}/H)=\QQ$ for all $e<H\leq\C_{p^k}$, where $p$ is a prime number.
    The set of isomorphism classes of equivariant minimal systems $\sM_n$ containing $\sM_{n-1}$ and satisfying $\mathbf{(1)_n, (2)_n}$ corresponds bijectively the disjoint union of orbit spaces $$X_n=\bigsqcup_{\kappa_0=\max\{0, \delta_0-\gamma_0\}}^\delta O_{\kappa_0}/\tilde G.$$
\end{theorem}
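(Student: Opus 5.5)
The plan is to deduce this from \cref{thm: plural homotopy types in C} by showing that, under the hypothesis $\s A(\C_{p^k}/H)=\QQ$ for $H>e$, every datum in \cref{cons: for plural homotopy types} is concentrated at the $G/e$-level. I will also show that the equivariant orbit data $\mathbb O_\kappa/\s G$ collapse to the nonequivariant orbit data $O_{\kappa_0}/\tilde G$.

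Two preliminary points are needed. First, the hypothesis does not literally say $\s A$ is split. I will check directly that \cref{cons: for plural homotopy types} only used splitting to know that the relevant kernels and cokernels are injective. Here every coefficient system that occurs vanishes away from $G/e$, so its structure maps are trivially surjective, and injectivity follows from \cref{thm: injectivity in Z_p^n}. Second, I must show that any minimal system $\s M_n\supseteq \s M_{n-1}$ satisfying $\mathbf{(1)_n},\mathbf{(2)_n}$ arises from the construction. Since $\s M(G/H)$ must have cohomology $\QQ$ for $H>e$, an induction on $n$ shows $\s M(G/H)=\QQ$ there. Then the levels $G/H$ with $H>e$ impose no constraints, and the nonequivariant argument of \cite{ShigaYamguchi} applied at $G/e$ (Steps 1--4, with $\u V^n$, $\u W$ and $\u C$ read off from $\s M_n(G/e)$) shows that $\s M_n$ is of the form $\s M_{\u W,n}$.

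Next I identify the parameter spaces. A subsystem $\u W\subseteq \uH^{n+1}(\s M_D)$ vanishing off $G/e$ is the same thing as a $\QQ(G)$-submodule $W\subseteq \H^{n+1}(\s M_D(G/e))$. The dimension tuple $\kappa$ is then $(\kappa_0,0,\dots,0)$, and likewise $\delta,\gamma$ reduce to $\delta_0,\gamma_0$. Injectivity is automatic, and the transversality condition \eqref{eq: transversility condition} and $\mathbf{(C_{n-1})}$ only need checking at $G/e$. Hence $\mathbb O_\kappa$ corresponds bijectively to $O_{\kappa_0}$, provided $W$ is taken to be $G$-stable. I will make this explicit, since the displayed $G(\beta_0,\kappa_0)$ should mean $\QQ(G)$-submodules.

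The remaining step, which I expect to be the main obstacle, is $\Aut(\s M_D)\cong \Aut(\s M_D(G/e))=\tilde G$. Here $\tilde G$ must be interpreted as the $G$-equivariant automorphisms. An automorphism of $\s M_D$ is a natural transformation. At the levels $G/H$ with $H>e$ it is the identity of $\QQ$, and it is forced to commute with the augmentation structure maps. Conversely, a $G$-equivariant automorphism of $\s M_D(G/e)$ preserves augmentation ideals, because it preserves positive degree, so it extends uniquely by identities. With this bijection compatible with the actions, the orbit sets agree. Applying \cref{thm: plural homotopy types in C} (or rather its proof, Step 5, which works verbatim here) then gives the stated bijection with $X_n$.
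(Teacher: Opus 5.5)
Your proposal is correct and takes essentially the same route as the paper: every coefficient system $\u V^n$, $\u W$, $\u C$ in \cref{cons: for plural homotopy types} is concentrated at $G/e$ (hence automatically injective), Step 5 reduces to $\QQ(G)$-submodules $W\subseteq \H^{n+1}(\s M_D(G/e))$, and $\Aut(\s M_D)\cong\Aut(\s M_D(G/e))$. Your extra points --- the induction showing $\s M(G/H)=\QQ$ for $H>e$, reading the parameter sets as $G$-stable subspaces, and interpreting $\tilde G$ as the $G$-equivariant automorphisms of $\s M_D(G/e)$ --- spell out details the paper leaves implicit.
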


A natural question is whether an $\s O_G$-graded algebra consisting of non-equivariantly intrinsically formal graded algebras at each orbit level is necessarily equivariantly intrinsically formal. We provide an example demonstrating that this is far from the truth. 
\begin{example} \label{ex: IF in each level but not eq IF}
    Let $\s A$ be a systems of GAs over $\s O_{\C_p}$ given by 
    $$\sA= \begin{cases}
    \QQ(x_2, y_3)/(x_2^2) , &\text{at}\ {\C_p}/e\\  
         \QQ(y_3'), &\text{at}\ {\C_p}/{\C_p}
\end{cases}$$
where the structure maps are $\s A(\hat e_{e, {\C_p}})(x_2)=0, \s A(\hat e_{e, {\C_p}})(y_3)=y_3'$, and the degrees of the generators are indicated by the subscripts. We note that each level is non-equivariantly intrinsically formal (see \cite[Theorem 5.2]{ShigaYagita}).

Let us construct minimal system $\s M(c)$ corresponding to each rational number $c \in \QQ$ by taking elementary extension of $$\s M_2= \begin{cases}
   \QQ(x_2) , &\text{at}\ {\C_p}/e\\  
         \QQ, &\text{at}\ {\C_p}/{\C_p}
\end{cases}$$
with respect to the maps $\alpha: \u V(c)\ra \s M_2$, where 
$$\u V(c)= \begin{cases}
    \QQ(y_3, z_3) , &\text{at}\ {\C_p}/e\\  
         \QQ(y_3'), &\text{at}\ {\C_p}/{\C_p}
\end{cases}$$
with $\u V(c)(\hat e_{e,{\C_p}})(z_3)=cy_3'$, $ \u V(c)(\hat e_{e,{\C_p}})(y_3)=y_3'$,
and $\alpha$ is given by 
$$\alpha({\C_p}/e)(y_3)=0,  \alpha({\C_p}/e)(z_3)=x_2^2, \alpha({\C_p}/{\C_p}) (y_3')=0.$$
We observe that $\u V(c)$ is injective in $\vc^\ast$ because  the structure maps are surjective, therefore $$\s M(c)=\s M_2^\alpha(\u V(c))= \begin{cases}
   \QQ(x_2,y_3,z_3) , &\text{at}\ {\C_p}/e\\  
         \QQ(y_3'), &\text{at}\ {\C_p}/{\C_p}
\end{cases}$$
where the non-trivial differential appears at ${\C_p}/e$ level and is given by $dz_3=x_2^2.$ The structure map $M(c)(\hat e_{e, {\C_p}})$ is given by $$ x_2\mapsto0, y_3\mapsto y_3', z_3\mapsto cy_3'.$$
It follows that $\uH^\ast(\s M(c))=\s A.$ Now we claim that although the cohomology of the constructed equivariant minimal systems agrees with $\s A$, the minimal systems $\s M(c_0)$ and $\s M(c_1)$ represent two different homotopy types of spaces if $c_0\neq c_1.$
We show that the structure maps $\s M(c_0)(\hat e_{e, {\C_p}})$ is not homotopic to $\s M(c_1)(\hat e_{e, {\C_p}})$.
For the sake of contradiction assume there exists a homotopy $$H:\QQ(x_2,y_3,z_3)\ra  \QQ(y_3')\otimes\QQ(t,dt)$$
such that $p_i H=\s M(c_i)(\hat e_{e, {\C_p}})$ for $i=0,1.$ Degree considerations force $H(z_3)=ky_3'$ for some $k\in \QQ$.
Now  $$p_iH(z_3)=\s M(c_i)(\hat e_{e, {\C_p}})(z_3)=c_i(y_3')$$
for $i=0,1.$ But, $p_iH(z_3)=ky_3'$. Therefore $c_0y_3'=ky_3'=c_1y_3'$, which yields $c_0=c_1$, arriving at a contradiction.
Therefore, we conclude that $\s A$ is not equivariantly intrinsically formal. Moreover, there exist infinitely many distinct rational homotopy types $\s M(c)$ having same cohomology algebra $\s A$ corresponding to each $c\in \QQ.$ 

\end{example}
\begin{remark}
    In \cref{rem: fixed point set of eq formal is formal}, we observed that fixed point sets of an equivariantly formal space are non-equivariantly formal. Here we observe that the converse is not true using \cref{ex: IF in each level but not eq IF}. Let $\s M$ denote the equivariant minimal model of the cohomology algebra $\s A$, and $\s M(c)\ncong \s M$. We conclude that $\sM(c) $ represents a space that is equivariantly non-formal. If it were formal, we would
    obtain a quasi-isomorphism $\s M(c)\xra{\sim}\uH^\ast(\s M(c))=\s A.$ 
    However, since $\s M$ is the minimal model for $\s A$, we also have a quasi-isomorphism $\s M \xra{\sim} \s A$. Therefore, $\sM(c)$ would be weakly equivalent to $\s M$, and since both are minimal systems, this would imply $\s M(c)\cong \sM$, which is a contradiction.
\end{remark}
\begin{remark}
    It is known that a cohomology algebra $A$ given by $A=\QQ(x_1, \cdots, x_s)/I$, where $I$ is an ideal generated by a regular sequence, is intrinsically formal (see \cite[Theorem 5.2]{ShigaYagita} or \cite{Sullivan_infinitesimal}). However, \cref{ex: IF in each level but not eq IF} shows that even though the levels are free CDGAs modulo a regular sequence, $\s A$ is not intrinsically formal. Thus, the aforementioned theorem does not generalize to the equivariant setting.
\end{remark}

\cref{{ex: IF in each level but not eq IF}} raises a question of whether the converse holds, namely,  whether $G/H$-orbit levels of an equivariantly intrinsically formal system of $\GA$s are intrinsically formal. We give a positive answer to this question under suitable assumptions.
\begin{definition}[Extension property]\label{defn: extension prop}
    We say a diagram of $\GA$s, $\sA$, over $\mathscr{O}_G$ satisfies extension property at a subgroup $H\leq G$ if for any DGA $B_H$ with
$
\H^\ast(B_H) \cong \H^\ast(\sA(G/H)),
$
there exists a system of $\DGA$s $\s B$ 
such that
\begin{enumerate}
\item $\s B(G/H) = B_H$,
\item $\uH^\ast(\sB) \cong \uH^\ast(\s A)$ as diagrams of graded algebras.
\end{enumerate}

\end{definition}
\begin{proposition}
Let $G = \C_{p^k}$ and let $\s A$ 
be a system of $\GA$s over $\s O_G$. Assume:
\begin{enumerate}
\item $\s A$ is intrinsically formal as a diagram, and
\item $\s A$ satisfies the extension property at every $H\leq G$,
\end{enumerate}
Then for every subgroup $H \leq G$, the $\
GA$, $\sA(G/H)$ is intrinsically formal.
\end{proposition}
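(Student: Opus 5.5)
We fix a subgroup $H\leq G$ and a minimal (Sullivan) algebra $M_H$ with $\H^\ast(M_H)\cong \sA(G/H)$. The goal is to show that $M_H$ is isomorphic to the minimal model of the graded algebra $\sA(G/H)$, or equivalently that $M_H$ is formal. The strategy is to place $M_H$ inside a diagram, use equivariant intrinsic formality of the diagram, and then evaluate at $G/H$.

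First we apply the extension property (\cref{defn: extension prop}) with $B_H:=M_H$. This produces an $\sO_G$-$\DGA$ $\sB$ with $\sB(G/H)=M_H$ and $\uH^\ast(\sB)\cong\sA$. If $\sB$ is not injective, we replace it by its injective envelope $\sI(\sB)$ using \cref{thm:fine inj extension}. From the construction in \cref{def: inj enlargement}, the value $\sI(\sB)(G/H)$ is $M_H$ tensored with acyclic algebras $\bigwedge$, so it is quasi-isomorphic to $M_H$. The system is cohomologically $1$-connected (we implicitly assume $\sA$ is, as everywhere in this section), so \cref{thm: existence of minimal model} gives a minimal model $\rho:\sM_{\sB}\to\sI(\sB)$ with $\uH^\ast(\sM_{\sB})\cong\sA$. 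Let $\sM$ be the minimal model of $\sI(\sA)$; it also has cohomology $\sA$.

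Since $\sA$ is equivariantly intrinsically formal, \cref{def:I.F} supplies a quasi-isomorphism between $\sM$ and $\sM_{\sB}$. We then evaluate levelwise at $G/H$, using that quasi-isomorphisms of diagrams are levelwise. This gives a zigzag
\[
M_H \xrightarrow{\sim} \sI(\sB)(G/H) \xleftarrow{\sim} \sM_{\sB}(G/H) \simeq \sM(G/H) \xrightarrow{\sim} \sI(\sA)(G/H) \xleftarrow{\sim} \sA(G/H).
\]
Hence $M_H$ is weakly equivalent to $\sA(G/H)$ with zero differential, so $M_H$ is formal. By uniqueness of non-equivariant minimal models, $M_H$ is isomorphic to the minimal model of $\sA(G/H)$. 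Since $M_H$ was arbitrary, $\sA(G/H)$ is intrinsically formal.

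The main obstacle is justifying the levelwise quasi-isomorphisms at $G/H$. One point is that the inclusion $\sA\to\sI(\sA)$ is a quasi-isomorphism at every level, not merely overall; here I would quote \cref{thm:fine inj extension} together with the explicit description of enlargements. A second point is that $\sM_{\sB}(G/H)$ need not be minimal; by \cref{ex: minimal system but not levelwise minimal} it can carry extra generators. This does not matter, because only quasi-isomorphism type is used, and non-equivariant minimal models of $M_H$ and of $\sA(G/H)$ are then compared directly. I would also check that the comparison map supplied by \cref{def:I.F} is a genuine morphism of systems, so that evaluating it at $G/H$ is legitimate.
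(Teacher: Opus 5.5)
Your argument is correct and is the same as the paper's: apply the extension property to place the given algebra at level $G/H$ of a diagram $\sB$ with $\uH^\ast(\sB)\cong\sA$, use equivariant intrinsic formality to connect $\sA$ and $\sB$ by a zig-zag of quasi-isomorphisms, and evaluate at $G/H$. Your passage through $\sI(\sB)$ and the minimal models $\sM$, $\sM_{\sB}$ spells out the step the paper states only as ``there exists a weak equivalence of diagrams.'' The levelwise claims you flag as obstacles hold by definition, since quasi-isomorphisms of $\sO_G$-$\DGA$s are defined levelwise.
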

\begin{proof}
Fix a subgroup $H \leq G$. Let $B_H$ be any DGA such that
$
\H^\ast(B_H) \cong \H^\ast(\sA(G/H)).
$
We must show that $\sA(G/H)$ is weakly equivalent to $ B_H$.

By the extension property, there exists a system of $\DGA$s $\s B$
such that
$
\sB(G/H) = B_H
$
and
$
\uH^\ast(\sB) \cong \uH^\ast(\sA)
$
as diagrams of graded algebras.

Since $\s A$ is intrinsically formal, it follows that there exists a weak equivalence of diagrams
$
\sA\to \cdots \leftarrow  \sB.
$
Evaluating at $G/H$, we obtain a weak equivalence
$
\sA(G/H) \to \cdots \leftarrow \sB(G/H) = B_H.
$
Since $B_H$ was arbitrary, it follows that $\sA(G/H)$ is intrinsically formal.
\end{proof}
\begin{example}
    Consider a systen of $\GA$s $\s A$ over $\s O_{\C_{p^k}}$ such that $\s A(G/H)=\QQ$ for all $H>e.$ In this case, $\s A$ satisfies the extension property at each $H\leq G.$ Indeed, let $B_H$ be a $\GA$ with the same cohomology as $\s A(G/H).$
    
   If $H\neq e$, then we define an extension
   $\s B$ by  $$\s B(G/e)=\s A(G/e)\otimes B_H,\qquad \s B(G/K)=B_H \qquad\text{for }K\neq e.$$ 
   
    On the other hand, if $H=e$, given $B_e$, we define $\s B$ by $$\s B(G/e)=B_e,\qquad \s B(G/K)=\QQ \qquad\text{for }K\neq e.$$ 
    Therefore, by the above proposition, if $\s A$ is equivariantly intrinsically formal, then the orbit level $\GA$s are intrinsically formal.
\end{example}
\vspace{1.5em}
\noindent \textbf{Acknowledgment:}
Bittu Singh gratefully acknowledges the support of the Prime Minister's Research Fellowship (PMRF ID 1302644), Government of India.

\vspace{1.5em}
\noindent \textbf{AI Declaration:}
No generative AI was used for the proofs or the computations presented in the manuscript.
\bibliographystyle{alpha}
\bibliography{reference}

    \end{document}